\newcommand{\OK}{{\overline{{\mathbb K}}}}
\newcommand{\bc}{\begin{center}}
\newcommand{\ec}{\end{center}}
\newcommand{\tmsamp}[1]{\textit{#1}}
\theoremstyle{definition}
\newtheorem{Def}[theorem]{Definition}
\newtheorem{The}[theorem]{Theorem}
\newtheorem{Pro}[theorem]{Proposition}
\newtheorem{Lem}[theorem]{Lemma}
\newtheorem{Cor}[theorem]{Corollary}
\newtheorem{Rem}[theorem]{Remark}
\newtheorem{Not}[theorem]{Notation}
\DeclareMathOperator{\diam}{diam}
\DeclareMathOperator{\supp}{supp}
\DeclareMathOperator{\Inv}{Inv}
\DeclareMathOperator{\dis}{d}
\DeclareMathOperator{\Ann}{Ann}
\DeclareMathOperator{\Rad}{Rad}
\DeclareMathOperator{\Arg}{Arg}
\DeclareMathOperator{\dd}{d}
\DeclareMathOperator{\A}{A}
\DeclareMathOperator{\V}{V}
\DeclareMathOperator{\cl}{cl}
\def\Rset{{\mathbb R}}
\def\Zset{{\mathbb Z}}
\def\Cset{{\mathbb C}}
\def\Nset{{\mathbb N}}
\def\Kset{{\mathbb K}}
\def\Qset{{\mathbb Q}}
\begin{document}

\title[Colombeau's Algebra of full Generalized Numbers]{Colombeau's Algebra of full Generalized Numbers
\footnote{2000 Mathematics Subject Classification: Primary 
46F30 Secondary  46T20\protect\\ 
Keywords and phrases: Colombeau's algebra, Generalized numbers, 
Sharp topology, Full, Algebraic structure.}}
\author[Sh. J.~Aragona]{Jorge Aragona\footnote{E-mail: {\sf aragona@ime.usp.br},}\inst{1}}
 \address[\inst{1}]{Universidade de São Paulo, Instituto de
   Matemática e Estatística}
\author[Sh. A.~R.~G.~Garcia]{Antonio Ronaldo Gomes
  Garcia\footnote{E-mail: {\sf ronaldogarcia@ufersa.edu.br}}\inst{2}}
\address[\inst{2}]{Universidade do Estado do Rio Grande do Norte,
  Departamento de Matemática e Estatística\\
Universidade Federal Rural do Semi-Árido, Departamento de Ciências Ambientais}
\author[Sh. S.~O.~Juriaans]{Stanley Orlando Juriaans
\footnote{E-mail: {\sf ostanley@ime.usp.br},}\inst{1}}

\maketitle

\begin{abstract} 
Let $\overline{\Kset}_f$ denote the commutative unital ring of  
Colombeau's full generalized numbers. This ring can be endowed 
with an ultra-metric in such a way that it becomes a topological 
ring. There are many interesting question about $\overline{\Kset}_f$ 
in the framework of Commutative Algebra and General Topology as well 
as of the superposition of these two subjects. The purpose of this 
paper aims to give an initial step toward the study of this ring.  
\end{abstract}

\section*{Introduction} In what follows $\Kset$ will denote either
$\Rset$ or $\Cset$ and $\overline{\Kset}_f$ will denote the
commutative ring of Colombeau's full generalized
numbers. Recently, Aragona and Juriaans (see \cite{AJ})
developed algebraic and topological methods to study the ring 
of the simplified algebra of Colombeau generalized numbers 
$\overline{\Kset}$ (see \cite{AJ}). Understanding the algebraic 
and topological properties of $\OK$ is important because a 
generalized function can be considered to be an $C^{\infty}$ 
function defined on a subset of $\OK^n$ as shown in \cite{afj}.  

The purpose of this paper is to give a contribution to the study of 
the algebraic and topological properties of $\overline{\Kset}_f$. 
In Section \ref{sec-01} we collect the basic
definitions, results and notation that will be used in the throughout
the paper and as a rule most of the proofs are omitted. In Section
\ref{sec-02} we present our first results most of them being 
extensions of results of Aragona-Juriaans. 
In Section \ref{sec-03}, we present a more accurate look at the 
structure of the maximal ideals of $\overline{\Kset}_f$ 
based on two basic tools. First, a careful analysis of the set of the
representatives of elements of $\overline{\Kset}_f$. Second, the
introduction of a set $\mathcal{S}_f$ of subsets of the domain
$\mathcal{A}_0(\Kset)$ of the representatives of elements of 
$\overline{\Kset}_f$. The set $\mathcal{S}_f$ plays the "role" of the
set $\mathcal{S}$ in the article of Aragona and Juriaans 
(see Definition 4.1 in \cite{AJ}). The family $\{\mathcal{X}_A\}
~(A\in\mathcal{S}_f)$ of elements of $\overline{\Kset}_f$, where
$\mathcal{X}_A$ is the characteristic function of
$A$, is very useful to the study of a number of interesting 
properties one of them being that the unit group $\overline{\Kset}_f$ is 
dense. We also derive several characterizations of the units of
$\overline{\Kset}_f$ as well as a complete description of the maximal
ideals of $\overline{\Kset}_f$. The description of the
prime ideals of $\overline{\Kset}_f$ appears to be more complicated
than that of the maximal ones. Nevertheless we obtain a first step in this
direction. Finally, in Section, \ref{sec-04} 
based in the work of Aragona, Juriaans, Scarpalezos and Oliveira
\cite{AJOS} we study  order relations on
$\overline{\Rset}_f$  which is used  to continue with the study of 
the algebraic properties of $\overline{\Kset}_f$. Indeed, 
we describe completely the minimal primes and show that
$\overline{\Kset}_f$ is not Von Neumann regular.
\section{The sharp topology on $\overline{\Kset}_f$}\label{sec-01}
In this section we recall some basic definitions and results about
$\overline{\Kset}_f$ with the purpose of fixing the terminology. As a
rule, the proofs are be omitted. 

\newpage
\begin{Not}{\tmsamp{Some notations necessary in this work
\begin{enumerate}
\item[$(a)$] $I:=]0,1], ~\overline{I}:=[0,1]$ e $I_\eta:=]0,\eta[,
~\forall~\eta\in I$.
\item[$(b)$] $A\setminus B:=\{a\in A|a\notin B\}$. 
\item[$(c)$] $\Qset$, denotes the field rational numbers.
\item[$(d)$] $\Kset$, denotes the field of real (or complex) numbers,
  i.e., $\Rset$ (or $\Cset$).
\item[$(e)$] $\Kset^*:=\Kset\setminus \{0\}$.
\item[$(f)$] $\Nset, \Zset$, stands respectively for 
the set of the natural numbers and the set of integers.\\ 
$\Nset^*:=\Nset\setminus\{0\}$ and $\Zset^*:=\Zset\setminus\{0\}$.
\item[$(g)$] $\Kset^*:=\Kset\setminus \{0\}$.
\item[$(h)$] $\Rset_+:=\{x\in\Rset|x\ge 0\}$ e $\Rset^*_+:=\{x\in\Rset|x>0\}$.
\item[$(i)$] $\overline{\Kset}$, denotes the ring of Colombeau's
  simplified generalized numbers. 
\item[$(j)$]
  $\mathcal{A}_0(\Rset):=\left\{\varphi\in\mathcal{D}(\Rset)|\int_0^\infty
  \varphi(x)\dd x=\frac{1}{2}, ~\varphi ~\mbox{is even 
    and}~\varphi\equiv\mbox{const.}~\mbox{em}~ V_0\right\}$, where $V_0$
  is a neighborhood of the origin.
\item[$(l)$] $\mathcal{A}_q(\Rset):=\left\{\varphi\in\mathcal{A}_0(\Rset)|\int_0^\infty x^
{\frac{j}{m}}\varphi(x)\dd x=0, ~\mbox{para}~1\le j,m\le q\right\}$, where 
$q\in \Nset$.
\item[$(m)$] $\Gamma:=\{\gamma:\Nset\to \Rset_+|\gamma(n)<\gamma(n+1),
  ~\forall~n\in \Nset ~\mbox{and}~\lim_{n\to
    \infty}\gamma(n)=\infty\}$ is the set of the strict increasing
  sequences diverging to infinity when $n\to\infty$.
\end{enumerate}}}
\end{Not}

\begin{Def}\label{simplificada}{\tmsamp{
Let $\mathcal{E}_f(\Kset)$ be the ring (pointwise operations) of
  the functions $v:\mathcal{A}_0(\Kset)\to\Kset$. Let
  $\mathcal{E}_f^M(\Kset)$ denote the subring of
  $\mathcal{E}_f(\Kset)$ of those functions $v$ satisfying:}}
{\tmsamp{
\begin{enumerate}
  \item[$(M)$]\label{kset-1} $\exists ~p\in\Nset$ such that 
$\forall ~\varphi\in\mathcal{A}_p(\Kset), 
~\exists~ C=C_\varphi>0, ~\exists~\eta=\eta_\varphi>0$ such that
$$|v(\varphi_{\varepsilon})|\le C\varepsilon^{-p}, ~\forall~
0<\varepsilon<\eta.$$
\end{enumerate}
These are called moderate functions.}}

{\tmsamp{Let $\Gamma$ be as in $(m)$ and define $\mathcal{N}_f(\Kset)$ the
ideal of $\mathcal{E}_f^M(\Kset)$ of those functions $v$ satisfying:
\begin{enumerate}
\item[$(N)$]\label{kset} $\exists~ p\in\Nset,
  ~\exists~\gamma\in\Gamma$ such that
$\forall~q\ge p ~\mbox{e}~\forall~\varphi\in\mathcal{A}_q(\Kset),
~\exists~ C=C_\varphi>0, ~\exists~\eta=\eta_\varphi>0$ such that
$$|v(\varphi_\varepsilon)| \le C\varepsilon^{\gamma(q)-p},
~\forall~0<\varepsilon<\eta.$$
\end{enumerate}
These are called null functions.}}

{\tmsamp{The ring of the Colombeau's full generalized numbers is defined as 
$\overline{\Kset}_f=\mathcal{E}_f^M(\Kset)/\mathcal{N}_f(\Kset)$.}}
\end{Def}

There exists a natural embedding of $\Kset$ into $\overline{\Kset}_f$
(induced by the map $v\mapsto (\varphi\mapsto v)$ and so 
write $\Kset\subset\overline{\Kset}_f$ makes sense. 
Hence $\overline{\Kset}_f$ is a unital commutative $\Kset$-algebra. 
The following definition is well known (see \cite{MK}).

\begin{Def} {\tmsamp{An element $v\in\overline{\Kset}_f$ is associated 
to zero, $v\approx 0$, if for some (hence for each) representative 
$(v(\varphi))_\varphi$ of $v$ we have: 
$$\exists ~p\in\Nset ~\mbox{such that}~\lim_{\varepsilon\downarrow
  0}v(\varphi_\varepsilon)=0,
  ~\forall~\varphi\in\mathcal{A}_p(\Kset).$$
Two elements $v_1,v_2\in\overline{\Kset}_f$ are associated, $v_1\approx
v_2$, if $v_1-v_2\approx 0$. If there exists some $a\in\Kset$ with 
$v\approx a$, then $a$ is called associated the number or shadow of $v$.}}   
\end{Def} 

We denote by $\Inv({\overline{\Kset}_f})$ the set of units of
$\overline{\Kset}_f$ and clearly $\Kset^*$ is a subgroup of the
multiplicative group $\Inv({\overline{\Kset}_f})$. Another interesting
subgroup of $\Inv({\overline{\Kset}_f})$ is
$H:=\{\dot{\alpha}_r|r\in\Rset\}$, where
$\hat{\dot{\alpha}}_r(\varphi):=\diam(\supp(\varphi))^r$ or 
$\hat{\dot{\alpha}}_r(\varphi):=(i(\varphi))^r$, where $i$ denotes the
diameter of $\varphi\in\mathcal{A}_0(\Kset)$. In particular, it is
easy to see that
\begin{equation}\label{artigo-1}
\hat{\dot{\alpha}}_r (\varphi_{\varepsilon})={\varepsilon}^r(i(\varphi))^r.
\end{equation}

For $r\in\Rset, ~\hat{\alpha}_r:]0,1]\to \Rset_+$ is defined by 
$\hat{\alpha}_r(\varepsilon):=\varepsilon^r$. It is convenient to 
define $\dot{\beta}_r:=\dot{\alpha}_{-\log(r)}$.
 By (\ref{artigo-1}) we have that 
$\hat{\dot{\alpha}}_r(\varphi_\varepsilon)=\hat{\alpha}_r(\varepsilon)(i(\varphi))^r.$

We now briefly describe the sharp topology on $\overline{\Kset}_f$ 
(see \cite{NTCA}). 

\begin{Def}\label{valuacao}{\tmsamp{ 
For a given $x\in\overline{\Kset}_f$ we set
$$\A(x):=\{r\in\Rset|\dot{\alpha}_rx\approx 0\}$$ and define the 
``valuation'' of $x$ by $$\V(x):=\sup(\A(x)).$$}}
\end{Def}
 
The following results are easily deduced.
\begin{Lem}\label{novo} {\tmsamp{Let $x\in\overline{\Kset}_f$. 
Then $r\in\A(x)$ if, and only if, there
exists $p\in\Nset$ such that $$\lim_{\varepsilon\downarrow
0}\varepsilon^{-r}x(\varphi_\varepsilon)=0, 
~\forall~\varphi\in\mathcal{A}_p(\Kset).$$}}
\end{Lem}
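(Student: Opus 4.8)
The plan is to unwind the two definitions involved — the definition of $\A(x)$ via Definition \ref{valuacao} and the definition of $\approx 0$ — and check that they match. First I would recall that by definition $r\in\A(x)$ means precisely $\dot\alpha_r x\approx 0$. By the definition of association to zero, this says: there exists $p\in\Nset$ such that $\lim_{\varepsilon\downarrow 0}(\dot\alpha_r x)(\varphi_\varepsilon)=0$ for all $\varphi\in\mathcal{A}_p(\Kset)$, where $(\dot\alpha_r x)(\varphi)$ is computed from a (hence any) representative.

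Next I would compute the representative of the product $\dot\alpha_r x$. Using \eqref{artigo-1}, for any representative $(x(\varphi))_\varphi$ of $x$ we have
\[
(\dot\alpha_r x)(\varphi_\varepsilon)=\hat{\dot\alpha}_r(\varphi_\varepsilon)\,x(\varphi_\varepsilon)=\varepsilon^{r}(i(\varphi))^{r}\,x(\varphi_\varepsilon).
\]
Since $i(\varphi)=\diam(\supp(\varphi))>0$ is a fixed positive constant (independent of $\varepsilon$) for each $\varphi\in\mathcal{A}_0(\Kset)$, the factor $(i(\varphi))^{r}$ is a nonzero constant and may be divided out of the limit. Hence $\lim_{\varepsilon\downarrow 0}(\dot\alpha_r x)(\varphi_\varepsilon)=0$ if and only if $\lim_{\varepsilon\downarrow 0}\varepsilon^{r}x(\varphi_\varepsilon)=0$.

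It remains only to reconcile the exponent: the statement of the lemma is phrased with $\varepsilon^{-r}$ rather than $\varepsilon^{r}$. Here I would note that the convention for $\dot\alpha_r$ may differ by a sign from what one expects, or — more likely given the formulation of Definition \ref{valuacao}, where $\V(x)=\sup\A(x)$ ought to be large when $x$ is "small" — the generator should really be read as $\hat{\dot\alpha}_r(\varphi)=i(\varphi)^{-r}$ (equivalently $\dot\alpha_r=\dot\beta$-type with the reciprocal), so that \eqref{artigo-1} gives $(\dot\alpha_r x)(\varphi_\varepsilon)=\varepsilon^{-r}(i(\varphi))^{-r}x(\varphi_\varepsilon)$ and the same constant-division argument yields exactly $\lim_{\varepsilon\downarrow 0}\varepsilon^{-r}x(\varphi_\varepsilon)=0$. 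Either way, once the sign convention is pinned down the equivalence is immediate.

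The only genuine point requiring care — and the step I expect to be the main obstacle — is the phrase "there exists $p\in\Nset$": one must check that the quantifier on $p$ transfers correctly between the two formulations and, relatedly, that the limit condition is independent of the choice of representative of $x$. For the latter, if $(x(\varphi))_\varphi$ and $(x'(\varphi))_\varphi$ are two representatives then their difference lies in $\mathcal{N}_f(\Kset)$, so by property $(N)$ there is $p_0$ and $\gamma\in\Gamma$ with $|x(\varphi_\varepsilon)-x'(\varphi_\varepsilon)|\le C\varepsilon^{\gamma(q)-p_0}$ for all $q\ge p_0$ and all $\varphi\in\mathcal{A}_q(\Kset)$; choosing $q$ large enough that $\gamma(q)-p_0>r$ (possible since $\gamma(q)\to\infty$) makes $\varepsilon^{-r}$ times this difference tend to $0$, so the two formulations of the limit agree on $\mathcal{A}_q(\Kset)$. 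Absorbing this $q$ into the existential "$\exists p$" gives the stated equivalence; I would present this absorption argument explicitly and treat the forward computation above as routine.
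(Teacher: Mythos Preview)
The paper does not actually prove this lemma: it is introduced with the sentence ``The following results are easily deduced'' and no argument is given. Your plan---unwinding Definition~\ref{valuacao} and the definition of association to zero, computing $(\dot\alpha_r x)(\varphi_\varepsilon)$ via \eqref{artigo-1}, and dividing out the nonzero constant $(i(\varphi))^{\pm r}$---is precisely the routine verification the paper has in mind, and it is correct.

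Your hesitation about the sign of the exponent is well founded: the paper's conventions are not internally consistent on this point (compare the literal reading of Definition~\ref{valuacao} with the use of $\dot\alpha_{-r}$ in the proof of Proposition~\ref{cara}\,(i)). The formulation with $\varepsilon^{-r}$ in the lemma is the one compatible with $\V$ being a valuation and with Corollary~\ref{norma}\,(iv), and your reading is the correct resolution. The extra care you take over representative-independence---using condition $(N)$ to absorb a sufficiently large $q$ into the existential $p$---goes beyond what the paper spells out anywhere, but is exactly the right justification.
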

 
\begin{Pro}\label{valor}{\tmsamp{For $x,y\in\mathcal{E}_f^M(\Kset)$ 
and $\lambda\in\Kset$, we have:
\begin{enumerate}
\item[$(i)$] $\V(\lambda x)=\V(x)$ if $\lambda\ne 0$;
\item[$(ii)$] $\V(xy)\ge\V(x)+\V(y)$;
\item[$(iii)$] $\V(\dot{\alpha}_r x)=r+\V(x)$ for each $r\in\Rset$;
\item[$(iv)$] $\V(x+y)\ge\inf\{\V(x),\V(y)\}$;
\item[$(v)$] $\V(x)=+\infty \Leftrightarrow x\in\mathcal{N}_f(\Kset)$;
\item[$(vi)$] $\V$ is constant on each equivalence class modulo
$\mathcal{N}_f(\Kset)$, i.e., $\V(x+y)=\V(x),
~\forall~x\in\mathcal{E}_f^M(\Kset)$ and $y\in\mathcal{N}_f(\Kset)$.
\end{enumerate}}}
\end{Pro}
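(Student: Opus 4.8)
The plan is to derive all six items from the description of $\A(x)$ given by Lemma \ref{novo}, after recording two elementary facts. First, $\A(x)$ is downward closed: if $r\in\A(x)$ and $r'<r$, then $\varepsilon^{-r'}x(\varphi_\varepsilon)=\varepsilon^{\,r-r'}\bigl(\varepsilon^{-r}x(\varphi_\varepsilon)\bigr)$ with $r-r'>0$ forces $\varepsilon^{-r'}x(\varphi_\varepsilon)\to0$ (with the same $p$ as for $r$); hence, whenever non-empty, $\A(x)$ is a half-line, so every $r<\V(x)$ lies in $\A(x)$. Second, $\A(x)\neq\emptyset$ for $x\in\mathcal E_f^M(\Kset)$: condition $(M)$ yields $p_0$ with $|x(\varphi_\varepsilon)|\le C_\varphi\varepsilon^{-p_0}$ near $0$ for $\varphi\in\mathcal A_{p_0}$, whence $\varepsilon^{-r}|x(\varphi_\varepsilon)|\le C_\varphi\varepsilon^{-(r+p_0)}\to0$ for every $r<-p_0$, so $(-\infty,-p_0)\subseteq\A(x)$. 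In particular $\V(x)\in(-\infty,+\infty]$, and $\V(x)=+\infty$ iff $\A(x)=\Rset$.

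For $(i)$ and $(iii)$ I identify the corresponding sets. Since $(\lambda x)(\varphi_\varepsilon)=\lambda\,x(\varphi_\varepsilon)$ and $\lambda\neq0$, Lemma \ref{novo} gives $\A(\lambda x)=\A(x)$, hence $(i)$. For $(iii)$, by (\ref{artigo-1}) one has $(\dot\alpha_r x)(\varphi_\varepsilon)=\hat{\dot\alpha}_r(\varphi_\varepsilon)x(\varphi_\varepsilon)=\varepsilon^r(i(\varphi))^rx(\varphi_\varepsilon)$, so $\varepsilon^{-s}(\dot\alpha_r x)(\varphi_\varepsilon)=(i(\varphi))^r\,\varepsilon^{-(s-r)}x(\varphi_\varepsilon)$; as $(i(\varphi))^r$ is a fixed nonzero scalar, this tends to $0$ exactly when $\varepsilon^{-(s-r)}x(\varphi_\varepsilon)$ does, i.e. $s\in\A(\dot\alpha_r x)\iff s-r\in\A(x)$. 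Thus $\A(\dot\alpha_r x)=r+\A(x)$ and, taking suprema, $\V(\dot\alpha_r x)=r+\V(x)$.

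For $(ii)$ and $(iv)$ I combine the half-line structure with the nesting $\mathcal A_{q'}\subseteq\mathcal A_q$ for $q'\ge q$. Fix $r<\V(x)$, $s<\V(y)$; they lie in $\A(x)$, $\A(y)$ with witnessing indices $p_1,p_2$, and on $\mathcal A_{p}$, $p:=\max\{p_1,p_2\}$, both limit statements hold. Then $\varepsilon^{-(r+s)}(xy)(\varphi_\varepsilon)=\bigl(\varepsilon^{-r}x(\varphi_\varepsilon)\bigr)\bigl(\varepsilon^{-s}y(\varphi_\varepsilon)\bigr)\to0$ gives $r+s\in\A(xy)$, and $\varepsilon^{-\min\{r,s\}}(x+y)(\varphi_\varepsilon)\to0$ gives $\min\{r,s\}\in\A(x+y)$. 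Letting $r,s$ increase to $\V(x),\V(y)$ yields $\V(xy)\ge\V(x)+\V(y)$ and $\V(x+y)\ge\inf\{\V(x),\V(y)\}$; since $\V(x),\V(y)>-\infty$, these also cover the cases where one or both valuations equal $+\infty$, so no separate discussion is needed.

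It remains to prove $(v)$, from which $(vi)$ follows at once: for $y\in\mathcal N_f(\Kset)$ the easy half of $(v)$ gives $\V(y)=\V(-y)=+\infty$ (the second equality by $(i)$), so $(iv)$ applied to $x+y$ and then to $(x+y)+(-y)$ yields $\V(x+y)\ge\V(x)\ge\V(x+y)$. The easy half of $(v)$ is direct: if $x$ satisfies $(N)$ with data $(p,\gamma)$, then for each $n\in\Nset$ pick $q\ge p$ with $\gamma(q)-p\ge n$, obtaining $|x(\varphi_\varepsilon)|\le C_\varphi\varepsilon^{n}$ near $0$ for all $\varphi\in\mathcal A_q$, hence $n\in\A(x)$; thus $\A(x)=\Rset$ and $\V(x)=+\infty$. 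The converse is the crux. From $\A(x)=\Rset$ we get, for each $n$, an index $p_n$ with $\lim_{\varepsilon\downarrow0}\varepsilon^{-n}x(\varphi_\varepsilon)=0$ for all $\varphi\in\mathcal A_{p_n}$; by the nesting of the $\mathcal A_q$ we may take $(p_n)$ non-decreasing with $p_n\ge p_0$. Put $m(q):=\sup\{n:p_n\le q\}$; it is non-decreasing on $\{q\ge p_0\}$, and either it is eventually $+\infty$ — in which case $(N)$ holds with $p:=p_0$ and $\gamma(q):=q$ — or $m(q)\to+\infty$, in which case $|x(\varphi_\varepsilon)|\le\varepsilon^{m(q)}$ near $0$ for $\varphi\in\mathcal A_q$ and it suffices to exhibit $\gamma\in\Gamma$ with $\gamma(q)\le m(q)+p_0$ for $q\ge p_0$; such a $\gamma$ exists by an elementary staircase construction, $m(q)+p_0$ being non-decreasing, integer valued and divergent. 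This extraction of a single uniform pair $(p,\gamma)$ from the pointwise family $\{p_n\}$ — equivalently, the passage from the ``$\forall q\,\exists p$'' form of nullity to condition $(N)$ — is the one genuinely technical point; everything else is routine manipulation of Lemma \ref{novo}.
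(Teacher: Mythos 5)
The paper offers no proof to compare against: Proposition \ref{valor} is announced with ``the following results are easily deduced'' and, as stated at the start of Section \ref{sec-01}, the proofs there are omitted (the analogous simplified-case statements live in \cite{NTCA}, \cite{AJ}). Your argument is a correct self-contained derivation from Lemma \ref{novo}, and the structure is the right one: downward closedness and non-emptiness of $\A(x)$, the identifications $\A(\lambda x)=\A(x)$ and $\A(\dot{\alpha}_r x)=r+\A(x)$, the half-line argument for $(ii)$ and $(iv)$, and the reduction of $(vi)$ to $(v)$ via $(i)$ and $(iv)$; you also correctly isolate the only real technical point, namely extracting a single pair $(p,\gamma)$ in condition $(N)$ from the family of indices $p_n$. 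Two small slips should be repaired, though neither affects the strategy. First, in the easy half of $(v)$, choosing $q$ with $\gamma(q)-p\ge n$ only gives $\varepsilon^{-n}|x(\varphi_\varepsilon)|\le C_\varphi$, i.e.\ boundedness; take $\gamma(q)-p\ge n+1$ to get the limit $0$ required by Lemma \ref{novo}. Second, in the branch where $m(q)$ is eventually $+\infty$, say for $q\ge Q$, the pair $(p,\gamma)=(p_0,\,q\mapsto q)$ is not justified for $p_0\le q<Q$ (there you only control $|x(\varphi_\varepsilon)|\le\varepsilon^{m(q)}$, and $m(q)$ may be smaller than $q-p_0$); take $p:=Q$ instead, and in the other branch enlarge $p$ by $1$ if needed so that the staircase minorant $\gamma$ can be kept strictly positive. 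With these adjustments the proof is complete.
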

 
It follows that
$D:\overline{\Kset}_f\times\overline{\Kset}_f\to\Rset_+$ defined by
$$D(x,y):=\exp(-\V(x-y))$$ is a metric. In fact it is an ultra-metric on
$\overline{\Kset}_f$ invariant under translations.
 
$D$ determines a uniform structure on $\overline{\Kset}_f$ called the 
{\it{sharp uniform structure}} on $\overline{\Kset}_f$ and the
topology resulting from $D$ is called the {\it{sharp topology}} 
on $\overline{\Kset}_f$ and denoted by $\tau_{sf}$.

\begin{Not} {\tmsamp{Let $x\in\overline{\Kset}_f$ and $r\in\Rset_+^*$. In what
follows $B_r(x)$ (resp. $B'_r(x)$ and $S_r(x)$) denotes the open
$D$-ball (resp. closed $D$-ball and $D$-sphere) with center at $x$
and radius $r$. In case $x=0$ we  omit it in the
notation, writing $B_r, B'_r$ and $S_r$. For the sake of simplicity we
define $\|x\|:=D(x,0), ~\forall~x\in\overline{\Kset}_f$ and the
distance between two elements $x,y\in\overline{\Kset}_f$ by
$d(x,y):=\|x-y\|$.}}  
\end{Not}

The following result is a direct consequence of Proposition \ref{valor}
and of the definition of $D$.
 
\begin{Cor}\label{norma} {\tmsamp{For given $x,y\in\overline{\Kset}_f,
r\in\Rset, s\in\Rset_+^*$ and $a,b\in\Kset$ we have:
\begin{enumerate}
\item[$(i)$] $\|x+y\|\le\max\{\|x\|,\|y\|\}$ and $\|xy\|\le\|x\|\|y\|$;
\item[$(ii)$] $\|x\|=0\Leftrightarrow x=0$;
\item[$(iii)$] $\|ax\|=\|x\|$, if $a\ne 0$;
\item[$(iv)$] $\|\dot{\alpha}_rx\|=e^{-r}\|x\|$ and $\|\beta_sx\|=s\|x\|$;
\item[$(v)$] $\|a\|=1$, if $a\ne 0$;
\item[$(vi)$] $\|a-b\|=1-\delta_{ab} ~(\mbox{Kronecker}~\delta)$.  
\end{enumerate}}}
\end{Cor}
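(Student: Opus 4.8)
The plan is to derive each item of Corollary \ref{norma} directly from Proposition \ref{valor} and the definition $\|x\| = D(x,0) = \exp(-\V(x))$, with the convention $\exp(-\infty) = 0$. Since $D$ is translation-invariant, we have $\|x\| = D(x,0)$ and $d(x,y) = D(x,y) = \exp(-\V(x-y))$, so each claim becomes a statement about $\V$.

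First I would prove $(i)$: the inequality $\|x+y\| \le \max\{\|x\|,\|y\|\}$ is just the monotonicity of $t \mapsto e^{-t}$ applied to $\V(x+y) \ge \inf\{\V(x),\V(y)\}$ from Proposition \ref{valor}$(iv)$, since $e^{-\inf\{a,b\}} = \max\{e^{-a},e^{-b}\}$; similarly $\|xy\| \le \|x\|\|y\|$ follows from $\V(xy) \ge \V(x)+\V(y)$ in $(ii)$ of the Proposition, because $e^{-(\V(x)+\V(y))} = e^{-\V(x)}e^{-\V(y)}$ (the inequality direction is preserved on passing to $e^{-t}$ and one checks the $+\infty$ cases do not cause trouble). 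Item $(ii)$, $\|x\|=0 \Leftrightarrow x=0$, is the equivalence $\V(x) = +\infty \Leftrightarrow x \in \mathcal{N}_f(\Kset)$ from Proposition \ref{valor}$(v)$, translated through $\exp(-\infty)=0$ and the fact that $x=0$ in $\overline{\Kset}_f$ means precisely that a representative lies in $\mathcal{N}_f(\Kset)$. Item $(iii)$ is Proposition \ref{valor}$(i)$: $\V(ax) = \V(x)$ for $a \ne 0$, hence $\|ax\| = \|x\|$.

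For $(iv)$, from Proposition \ref{valor}$(iii)$ we have $\V(\dot\alpha_r x) = r + \V(x)$, so $\|\dot\alpha_r x\| = e^{-(r+\V(x))} = e^{-r}\|x\|$; for the $\beta_s$ part, recall $\dot\beta_r := \dot\alpha_{-\log(r)}$, so $\|\dot\beta_s x\| = e^{-(-\log s)}\|x\| = s\|x\|$ (here $s \in \Rset_+^*$ so $\log s$ is defined). Items $(v)$ and $(vi)$ are the special cases over the scalars: for $a \in \Kset^*$ one has $\|a\| = \|a\cdot 1\| = \|1\|$ by $(iii)$, and $\|1\| = 1$ because $\V(1) = 0$ — indeed for any $r > 0$, $\dot\alpha_r \cdot 1$ has representative $\varphi \mapsto (i(\varphi))^r$ evaluated at $\varphi_\varepsilon$, which by \eqref{artigo-1} equals $\varepsilon^r (i(\varphi))^r \to 0$, so $r \in \A(1)$, while for $r < 0$ it does not tend to zero, giving $\V(1) = \sup \A(1) = 0$; thus $(v)$ holds, and $(vi)$ follows by writing $\|a-b\| = \|a-b\|$ which is $0$ if $a=b$ (as then $a-b=0$) and $= \|c\| = 1$ for $c := a-b \in \Kset^*$ if $a \ne b$, i.e.\ $\|a-b\| = 1 - \delta_{ab}$.

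There is no serious obstacle here — the corollary is genuinely a routine translation of the additive, $\Rset$-valued statements of Proposition \ref{valor} into their multiplicative, $\Rset_+$-valued exponential form. The only points requiring a moment of care are the bookkeeping at $\V = +\infty$ (so that, e.g., $\|xy\| \le \|x\|\|y\|$ still reads correctly when one factor is $0$), the verification that $\V(1) = 0$ so that the normalization $\|a\|=1$ for nonzero scalars comes out right, and keeping straight that $\dot\beta_s$ is defined via $-\log s$, which flips the sign and turns $e^{-(\cdot)}$ into multiplication by $s$.
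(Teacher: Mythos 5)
Your proposal is correct and follows exactly the route the paper intends: the paper gives no separate argument, stating only that the corollary is a direct consequence of Proposition \ref{valor} and the definition of $D$, which is precisely the item-by-item exponentiation you carry out (including the sign bookkeeping for $\dot{\beta}_s=\dot{\alpha}_{-\log s}$ and the normalization $\V(1)=0$).
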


\begin{Pro}\label{complete} {\tmsamp{$\left(\overline{\Kset}_f,\tau_{sf}\right)$ is a complete topological ring.}}
\end{Pro}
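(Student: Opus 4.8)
The plan is to establish the two assertions separately: first that $\overline{\Kset}_f$ is a topological ring (i.e. addition, negation and multiplication are continuous for $\tau_{sf}$), and then that the ultra-metric space $(\overline{\Kset}_f, D)$ is complete. For the topological-ring part I would argue directly from Corollary \ref{norma}(i). Since $D$ is translation invariant, continuity of addition at an arbitrary point reduces to continuity at $(0,0)$, and the estimate $\|x+y\|\le\max\{\|x\|,\|y\|\}$ immediately gives $D(x+x',y+y')=\|(x-y)+(x'-y')\|\le\max\{D(x,y),D(x',y')\}$, so addition is even uniformly continuous; negation is an isometry because $D$ is translation invariant; for multiplication, write $xy-x_0y_0=(x-x_0)y+x_0(y-y_0)$ and apply $\|xy\|\le\|x\|\|y\|$ together with $\|x+y\|\le\max\{\|x\|,\|y\|\}$ to get $\|xy-x_0y_0\|\le\max\{\|x-x_0\|\,\|y\|,\ \|x_0\|\,\|y-y_0\|\}$, which tends to $0$ as $(x,y)\to(x_0,y_0)$ (the factor $\|y\|$ stays bounded near $y_0$). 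This part is routine.

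The substantive content is completeness. Let $(x_n)_n$ be a Cauchy sequence in $\overline{\Kset}_f$; since $D$ is an ultra-metric, I may pass to a subsequence and assume $D(x_{n+1},x_n)\le e^{-n}$, i.e. $\V(x_{n+1}-x_n)\ge n$. First I would fix a convenient bookkeeping: choose for each $n$ a representative $\hat{x}_n\in\mathcal{E}_f^M(\Kset)$ of $x_n$, and let $y_n:=\hat{x}_{n+1}-\hat{x}_n\in\mathcal{E}_f^M(\Kset)$, a representative of $x_{n+1}-x_n$. By Lemma \ref{novo} applied to $r=n-1\in\A(x_{n+1}-x_n)$ (using $\V(x_{n+1}-x_n)\ge n>n-1$), there is $p_n\in\Nset$ with $\lim_{\varepsilon\downarrow 0}\varepsilon^{-(n-1)}y_n(\varphi_\varepsilon)=0$ for all $\varphi\in\mathcal{A}_{p_n}(\Kset)$. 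I would then build a candidate limit by a ``diagonal telescoping'' construction: define $v:\mathcal{A}_0(\Kset)\to\Kset$ by prescribing, on the decreasing scales $\mathcal{A}_{p_1}\supseteq\mathcal{A}_{p_2}\supseteq\cdots$, a modification of $\hat{x}_1+\sum_{k=1}^{N}y_k$ cut off at a level $N=N(\varphi)$ that tends to $\infty$ appropriately, so that on each test-object scale the tail of the telescoping series is negligible. The point is that the series $\hat{x}_1+\sum_{k\ge 1}y_k$ need not converge pointwise in $\mathcal{E}_f(\Kset)$, but for each fixed $q$ the estimate from $(N)$-type control shows that truncating it far enough gives a well-defined moderate function whose class is independent of the truncation, modulo $\mathcal{N}_f(\Kset)$.

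More concretely, I would proceed as follows. Reindex so that $p_n$ is nondecreasing (replace $p_n$ by $\max\{p_1,\dots,p_n\}$, which only shrinks the set $\mathcal{A}_{p_n}$ on which the estimate must hold — harmless). Define $v(\varphi):=\hat{x}_1(\varphi)+\sum_{k=1}^{m(\varphi)} y_k(\varphi)$, where $m(\varphi):=\max\{m\in\Nset\ :\ \varphi\in\mathcal{A}_{p_m}(\Kset)\}$ (and $m(\varphi)=0$ if $\varphi\notin\mathcal{A}_{p_1}$), a finite sum for each fixed $\varphi$. The verification then has three steps: (a) $v\in\mathcal{E}_f^M(\Kset)$ — bound $|v(\varphi_\varepsilon)|$ by $|\hat{x}_1(\varphi_\varepsilon)|+\sum_{k=1}^{m(\varphi)}|y_k(\varphi_\varepsilon)|$ and use the moderateness of $\hat{x}_1$ together with the fact that each $|y_k(\varphi_\varepsilon)|$ is, for $\varphi\in\mathcal{A}_{p_k}$, eventually dominated by $\varepsilon^{k-1}\to 0$, so the tail contributes a uniformly summable amount; one has to be slightly careful because $m(\varphi)$ depends on $\varphi$, but fixing the order $p$ needed for moderateness forces $m(\varphi)\ge$ some function of that $p$, and the tail from that index on is $\le\sum_{k\ge k_0}\varepsilon^{k-1}=O(\varepsilon^{k_0-1})$; (b) the class $x:=[v]\in\overline{\Kset}_f$ does not depend on the choices; (c) $x_n\to x$, i.e. $\V(x-x_n)\to\infty$. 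For (c), fix $n$; for $\varphi\in\mathcal{A}_{p_N}$ with $N$ large, $v(\varphi)-\hat{x}_n(\varphi)=\sum_{k=n}^{m(\varphi)} y_k(\varphi)$ up to a term that is $0$ once $m(\varphi)\ge n$, and the same tail estimate gives $|\varepsilon^{-(n-1)}(v(\varphi_\varepsilon)-\hat{x}_n(\varphi_\varepsilon))|\to 0$, hence $n-1\in\A(x-x_n)$ and $\V(x-x_n)\ge n-1\to\infty$. Finally, a Cauchy sequence in an ultra-metric space with a convergent subsequence converges, so $(x_n)$ itself converges to $x$.

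The main obstacle I anticipate is precisely the diagonal construction in the completeness proof: making the truncation index $m(\varphi)$ interact correctly with the two quantifier layers in the definitions of $\mathcal{E}_f^M$ and $\mathcal{N}_f$ (the ``$\exists p$'' of moderateness and the ``$\forall q\ge p$'' of nullity), and checking that the resulting $v$ is genuinely moderate and that its class is well defined. All the topological-ring continuity statements, by contrast, are immediate consequences of Corollary \ref{norma} and translation invariance of $D$, and need only be spelled out briefly.
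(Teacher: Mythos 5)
Your route is genuinely different from the paper's: the paper does not argue inside $\overline{\Kset}_f$ at all, but simply invokes the completeness of the full algebras $\mathcal{G}(\overline{\Omega})$ and $\mathcal{G}(\Omega)$ established in \cite{AGJ} together with the identification of $\overline{\Kset}_f$ as the ring of constants of these algebras (a closed subring of a complete space, hence complete), the topological-ring structure being inherited in the same way. You instead give a self-contained proof: the continuity of the operations from Corollary \ref{norma} and translation invariance of $D$ is routine and correct, and your completeness argument is the standard diagonal/telescoping construction on representatives. What your approach buys is independence from the external reference, at the price of handling the quantifier structure of $(M)$ and $(N)$ by hand; and indeed your steps (a) and (c) do go through as sketched, because the moment conditions defining $\mathcal{A}_q(\Kset)$ are invariant under the scaling $\varphi\mapsto\varphi_\varepsilon$ (so $m(\varphi_\varepsilon)=m(\varphi)$ and the truncation level is constant along each scaled path), and because the constants $C_\varphi,\eta_\varphi$ in $(M)$ may depend on $\varphi$, which is exactly what makes the bound by the $m(\varphi)$ finitely many terms harmless.

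There is, however, one step that fails as written: the assertion that $v(\varphi)=\hat{x}_1(\varphi)+\sum_{k=1}^{m(\varphi)}y_k(\varphi)$ is ``a finite sum for each fixed $\varphi$''. Making the $p_n$ merely nondecreasing is not enough: if the $p_n$ stabilize at some value $p$, then every $\varphi\in\mathcal{A}_p(\Kset)$ lies in $\mathcal{A}_{p_m}(\Kset)$ for all $m$, the maximum defining $m(\varphi)$ does not exist, and no pointwise substitute is available, since your estimates control $y_k(\varphi_\varepsilon)$ only for small $\varepsilon$ and give no convergence of $\sum_k y_k(\varphi)$ at $\varphi$ itself (nor, for the same reason, can one just assign an arbitrary value on such $\varphi$ without destroying step (c), because the scaled objects $\varphi_\varepsilon$ stay in the same class). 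The repair is to choose the $p_n$ strictly increasing, hence tending to infinity (enlarging $p_n$ is harmless, as you note), and then to use the fact that $\bigcap_{q\in\Nset}\mathcal{A}_q(\Kset)=\emptyset$: a mollifier lying in every $\mathcal{A}_q$ would have all its positive integer moments equal to zero while $\int\varphi=1$, forcing its Fourier transform, which is entire by compact support, to be identically $1$, which is impossible. With this, $m(\varphi)<\infty$ for every $\varphi\in\mathcal{A}_{p_1}(\Kset)$, and the rest of your argument (moderateness of $v$, the estimate $\V(x-x_n)\ge n-1$, and the passage from the chosen subsequence back to the full Cauchy sequence) is sound.
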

\begin{proof} It follows from the completude of algebras
  $\mathcal{G}\left(\overline{\Omega}\right)$ and
  $\mathcal{G}(\Omega)$ (see \cite{AGJ}) and from the fact that
  $\overline{\Kset}_f$ is the ring of the constants of such algebras. 
\end{proof}

\begin{Pro}\label{naoe}{\tmsamp{$\left(\overline{\Kset}_f,\tau_{sf}\right)$ 
is not: 
\begin{enumerate} 
\item[$(i)$] a $\Kset$-topological vector space;
\item[$(ii)$] separable;
\item[$(iii)$] locally compact.  
\end{enumerate}}}
\end{Pro}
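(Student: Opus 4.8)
The plan is to treat the three assertions separately, each one reducing quickly to the facts collected in Corollary~\ref{norma}. For $(i)$, I would show that the scalar multiplication $\Kset\times\overline{\Kset}_f\to\overline{\Kset}_f$ is discontinuous, which already prevents $(\overline{\Kset}_f,\tau_{sf})$ from being a $\Kset$-topological vector space (addition is continuous, since we have a topological ring, but that is not what fails). Fix $x\in\overline{\Kset}_f$ with $x\neq 0$ and set $\lambda_n:=1/n$, so $\lambda_n\to 0$ in $\Kset$; by Corollary~\ref{norma}$(iii)$ we have $\|\lambda_n x\|=\|x\|>0$ for every $n$, hence $\lambda_n x\not\to 0=0\cdot x$. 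Since $\overline{\Kset}_f$ is metrizable, this sequential obstruction shows that scalar multiplication is not continuous at $(0,x)$.

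For $(ii)$, the key remark is that $\Kset$ is uncountable while, by Corollary~\ref{norma}$(vi)$, any two distinct $a,b\in\Kset$ satisfy $\|a-b\|=1$. Thus $\{a:a\in\Kset\}$ is an uncountable subset of $\overline{\Kset}_f$ all of whose pairwise distances equal $1$. Since $D$ is an ultra-metric, the open balls $B_{1/2}(a)$, $a\in\Kset$, are pairwise disjoint: a common point $z\in B_{1/2}(a)\cap B_{1/2}(b)$ would give $\|a-b\|\le\max\{\|a-z\|,\|z-b\|\}<1$ by Corollary~\ref{norma}$(i)$. A dense subset of $\overline{\Kset}_f$ must meet each of these uncountably many nonempty, pairwise disjoint balls, so it cannot be countable; hence $\overline{\Kset}_f$ is not separable.

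For $(iii)$, I would prove that no closed ball $B'_\rho$ with $\rho>0$ is compact, which forces $\overline{\Kset}_f$ to be non-locally-compact: every neighborhood $U$ of $0$ contains an open ball $B_\rho$ and hence the closed ball $B'_{\rho'}$ for $0<\rho'<\rho$, so if $\overline{U}$ were compact its closed subset $B'_{\rho'}$ would be compact as well. To see that $B'_\rho$ is not compact, choose $r\in\Rset$ with $e^{-r}\le\rho$ and consider the set $\{\dot\alpha_r n:n\in\Nset^*\}$. By Corollary~\ref{norma}$(iv)$ and $(v)$ we get $\|\dot\alpha_r n\|=e^{-r}\|n\|=e^{-r}\le\rho$, so this set lies in $B'_\rho$, whereas for $n\neq m$ Corollary~\ref{norma}$(iv)$ and $(vi)$ give $\|\dot\alpha_r n-\dot\alpha_r m\|=e^{-r}\|n-m\|=e^{-r}>0$. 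An infinite set admitting a uniform positive lower bound on its pairwise distances has no accumulation point, hence is contained in no compact set; therefore $B'_\rho$ is not compact.

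None of these steps is really hard once Corollary~\ref{norma} is in hand; the only places asking for a little care are the ultra-metric disjointness of the balls used in $(ii)$ and, in $(iii)$, the reduction of ``not locally compact'' to ``small closed balls are not compact'', together with the observation that the copy of $\Nset^*$ already sitting inside $\Kset$ supplies, after scaling by a suitable $\dot\alpha_r$, an $e^{-r}$-separated infinite set inside every prescribed closed ball.
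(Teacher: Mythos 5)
Your proof is correct: the discontinuity of scalar multiplication via $\|\lambda_n x\|=\|x\|$, the uncountable $1$-separated copy of $\Kset$ whose radius-$\tfrac12$ balls are pairwise disjoint, and the $e^{-r}$-separated set $\{\dot{\alpha}_r n\}_{n\in\Nset^*}$ sitting inside any closed ball all check out against Corollary~\ref{norma}, and the reduction of local compactness to compactness of small closed balls at $0$ is legitimate since the space is a metrizable topological group. The paper itself omits the proof of this proposition (it is one of the results carried over from the simplified setting of Aragona--Juriaans), and your argument is precisely the standard one that setting uses, so there is nothing different to compare.
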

\section{Algebraic properties of $\overline{\Kset}_f$}
\label{sec-02}
In this section we study some algebraic properties of
$\overline{\Kset}_f$. $\overline{X}$ stand for the topological 
closure of the set $X$ (except in the notation $\overline{\Kset}_f$). 
We start with the following:

\begin{Lem}\label{boll}{\tmsamp{
\begin{enumerate}
\item[$(i)$] $x\in B_1$ iff $\V(x)>0$.
\item[$(ii)$] If $x\in B_1$, then the following statements hold:
\begin{enumerate}
\item[$(a)$] $x\approx 0$;
\item[$(b)$] $D(1,x)=\|1-x\|\ge 1$. Hence $1\notin \overline{B}_1,
    B_1\cap B_1(1)=\emptyset, B'_1\supset\overline{B}_1$ and
    $B'_1\ne\overline{B}_1$.   
\end{enumerate}
\end{enumerate}}}
\end{Lem}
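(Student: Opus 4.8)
The plan is to derive both items directly from the definitions of $\V$, $D$ and $\|\cdot\|$, together with Proposition \ref{valor} and Corollary \ref{norma}. For part $(i)$, I would simply unravel the definitions: $x\in B_1=B_1(0)$ means $D(x,0)=\|x\|=\exp(-\V(x))<1$, and since $\exp$ is strictly increasing this is equivalent to $-\V(x)<0$, i.e. $\V(x)>0$. (One should note in passing that $\V(x)=+\infty$ is allowed, corresponding to $x=0$ by Proposition \ref{valor}$(v)$, and indeed $0\in B_1$ and $\V(0)=+\infty>0$, so the equivalence still holds.)

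For part $(ii)(a)$, assume $x\in B_1$, so by $(i)$ we have $\V(x)>0$; pick any rational $r$ with $0<r<\V(x)$. By Definition \ref{valuacao} and Lemma \ref{novo}, $r\in\A(x)$ means precisely that for some $p\in\Nset$, $\lim_{\varepsilon\downarrow 0}\varepsilon^{-r}x(\varphi_\varepsilon)=0$ for all $\varphi\in\mathcal{A}_p(\Kset)$. Since $r>0$ we have $\varepsilon^{-r}\to\infty$, hence a fortiori $\lim_{\varepsilon\downarrow 0}x(\varphi_\varepsilon)=0$ for all such $\varphi$, which is exactly the statement $x\approx 0$. (Strictly, one should check that $0\in\A(x)$, i.e. $\V(x)\ge 0$ suffices, but $\V(x)>0$ certainly gives this.)

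For part $(ii)(b)$: write $1-x$ and estimate its valuation. We have $1\in\Kset^*$, so by Corollary \ref{norma}$(v)$ (or Proposition \ref{valor} applied to the constant function) $\V(1)=0$. Now $x\in B_1$ gives $\V(x)>0$, so $\V(x)>\V(1)$; by the ultra-metric inequality Proposition \ref{valor}$(iv)$, when the two valuations are distinct we in fact get equality with the smaller one, so $\V(1-x)=\min\{\V(1),\V(x)\}=\V(1)=0$. Therefore $D(1,x)=\|1-x\|=\exp(-\V(1-x))=\exp(0)=1\ge 1$. The four topological consequences are then immediate formal deductions: $\|1-x\|=1\not\to 0$ as $x$ ranges over $B_1$ shows $1\notin\overline{B}_1$; if $y\in B_1\cap B_1(1)$ then $\|y\|<1$ and $\|1-y\|<1$, contradicting $1=\|1-0\|\le\max\{\|1-y\|,\|y-0\|\}<1$, so $B_1\cap B_1(1)=\emptyset$; $B'_1\supset B_1$ and $B'_1$ is closed give $B'_1\supset\overline{B}_1$; and since $1\in B'_1$ but $1\notin\overline{B}_1$ the last inclusion is strict.

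The only step requiring any care is the equality $\V(1-x)=0$ in $(ii)(b)$: one must invoke the \emph{strict} form of the ultra-metric inequality (when $\V(x)\ne\V(1)$ the minimum is attained, not merely a lower bound), which follows from Proposition \ref{valor}$(iv)$ applied twice — once to $1=(1-x)+x$ to get $\V(1)\ge\min\{\V(1-x),\V(x)\}$, forcing $\V(1-x)\le\V(1)=0$ since $\V(x)>0$, and once to $(1-x)=1+(-x)$ to get $\V(1-x)\ge\min\{\V(1),\V(x)\}=0$. Everything else is a direct translation through the definitions.
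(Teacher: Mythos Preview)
The paper states Lemma~\ref{boll} without proof, so there is no argument to compare against; your proposal supplies a correct and complete direct verification from Definition~\ref{valuacao}, Lemma~\ref{novo}, Proposition~\ref{valor} and Corollary~\ref{norma}. In part $(ii)(b)$ you in fact prove the sharper equality $\|1-x\|=1$ (via the strict ultrametric inequality when $\V(x)\ne\V(1)$), which of course implies the stated $\ge 1$; the remaining topological consequences are deduced correctly.
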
 

\begin{Pro}\label{inv} {\tmsamp{Let $\{a_n\}_{n\in\Nset}$ be any sequence in 
$\Kset$ and let $x\in B_1$. Then the series $\underset{n\ge 0}{\sum}a_nx^n$
  converges in $\overline{\Kset}_f$. In particular $\underset{n\ge
    0}{\sum}x^n$ converges and we have $(1-x)\left(\underset{n\ge
    0}{\sum}x^n\right)=1$. Therefore $(1-x)\in\Inv(\overline{\Kset}_f)$.}}
\end{Pro}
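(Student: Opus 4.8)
The plan is to establish convergence first and then verify the telescoping identity. I would begin by recalling from Lemma~\ref{boll}(i) that $x\in B_1$ means $\V(x)>0$; fix a real number $t$ with $0<t<\V(x)$, so that by Corollary~\ref{norma} we have $\|x\|\le e^{-t}<1$. First I would show the partial sums $S_N:=\sum_{n=0}^N a_nx^n$ form a Cauchy sequence in the complete ring $\overline{\Kset}_f$ (Proposition~\ref{complete}). Using the ultra-metric inequality from Corollary~\ref{norma}(i), for $M<N$ one gets $\|S_N-S_M\|=\|\sum_{n=M+1}^N a_nx^n\|\le\max_{M+1\le n\le N}\|a_n\|\,\|x\|^n$. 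Since $a_n\in\Kset$, Corollary~\ref{norma}(v) gives $\|a_n\|\le 1$, hence $\|S_N-S_M\|\le\max_{M+1\le n\le N}e^{-tn}=e^{-t(M+1)}\to 0$ as $M\to\infty$. Thus $\{S_N\}$ is Cauchy and converges; call its limit $\sum_{n\ge 0}a_nx^n$. Taking all $a_n=1$ shows $\sum_{n\ge 0}x^n$ converges; denote this limit by $y$.

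Next I would prove the identity $(1-x)y=1$. Consider the finite telescoping identity $(1-x)\sum_{n=0}^N x^n=1-x^{N+1}$, valid in any commutative ring. Multiplication by the fixed element $1-x$ is continuous (indeed $\|(1-x)(z-z')\|\le\|1-x\|\,\|z-z'\|$ by Corollary~\ref{norma}(i)), so letting $N\to\infty$ the left side converges to $(1-x)y$. On the right side, $\|x^{N+1}\|\le\|x\|^{N+1}\le e^{-t(N+1)}\to 0$, so $x^{N+1}\to 0$ and the right side converges to $1$. Hence $(1-x)y=1$, which exhibits $y$ as a two-sided inverse of $1-x$ (the ring being commutative), so $1-x\in\Inv(\overline{\Kset}_f)$.

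The only genuinely delicate point is making sure the convergence arguments are legitimate in this ultra-metric, non-Archimedean, non-locally-convex setting — in particular that ``convergence of partial sums'' is the right notion of series convergence and that it interacts correctly with the continuity of multiplication and with taking limits inside $1-x^{N+1}$. All of this, however, is controlled cleanly by the ultra-metric estimate $\|S_N-S_M\|\le\max\|a_n\|\|x\|^n$ together with the geometric decay $\|x\|^n\le e^{-tn}$, so there is no real obstacle; the non-local-compactness and absence of a vector-space topology noted in Proposition~\ref{naoe} never enter, because the estimates are purely multiplicative. I would end by remarking that the computation in fact shows $\sum_{n\ge 0}x^n=(1-x)^{-1}$ explicitly.
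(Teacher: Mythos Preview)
Your argument is correct and is exactly the standard one the paper has in mind; indeed the paper omits the proof entirely, stating only that it ``is standard and left for the reader.'' Your use of Lemma~\ref{boll}(i), the ultra-metric estimates of Corollary~\ref{norma}, and completeness from Proposition~\ref{complete} is precisely the expected route, and the telescoping identity together with continuity of multiplication finishes the job cleanly.
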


\begin{proof} The proof is standard an left for the reader.
\end{proof}

\begin{Cor}\label{aberto}
  {\tmsamp{$\Inv(\overline{\Kset}_f)\subset\overline{\Kset}_f$ is
      open.}}
\end{Cor}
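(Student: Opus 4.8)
The plan is to show that every point of $\Inv(\overline{\Kset}_f)$ is an interior point. Let $u\in\Inv(\overline{\Kset}_f)$ and set $y:=u^{-1}$. The idea is to produce an explicit radius $\rho>0$ such that $B_\rho(u)\subset\Inv(\overline{\Kset}_f)$, using Proposition~\ref{inv} together with the multiplicativity of $\|\cdot\|$ from Corollary~\ref{norma}(i). First I would take any $z\in\overline{\Kset}_f$ with $\|z-u\|<\rho$ and write
$$z = u - (u-z) = u\bigl(1 - y(u-z)\bigr).$$
Since $u$ is a unit, $z$ will be a unit as soon as the factor $1-y(u-z)$ is a unit. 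Setting $x:=y(u-z)$, Corollary~\ref{norma}(i) gives $\|x\|\le\|y\|\,\|u-z\| < \|y\|\,\rho$, so choosing $\rho:=\|y\|^{-1}$ (note $y\ne 0$, hence $\|y\|>0$ by Corollary~\ref{norma}(ii), so this makes sense, and if one prefers a strict inequality one may take any $\rho\le\|y\|^{-1}$) forces $\|x\|<1$, i.e. $x\in B_1$ by definition of the ball centered at $0$.

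At this point Proposition~\ref{inv} applies directly: since $x\in B_1$, the element $1-x$ lies in $\Inv(\overline{\Kset}_f)$. Consequently $z = u(1-x)$ is a product of two units, hence itself a unit. This shows $B_\rho(u)\subset\Inv(\overline{\Kset}_f)$, and since $u$ was arbitrary, $\Inv(\overline{\Kset}_f)$ is open.

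I do not anticipate a serious obstacle here; the argument is the standard "units form an open set in a topological ring with multiplicative (sub)norm" proof, and every ingredient — the characterization $x\in B_1\iff\|x\|<1$ (equivalently $\V(x)>0$, Lemma~\ref{boll}(i)), submultiplicativity of $\|\cdot\|$, and the Neumann-series invertibility of $1-x$ for $x\in B_1$ — is already available in the excerpt. The only minor point to be careful about is the bookkeeping on the radius: one must check $\|y\|\neq 0$, which is immediate from $y=u^{-1}\neq 0$ and Corollary~\ref{norma}(ii), and then verify that $\|u-z\|<\|y\|^{-1}$ indeed yields $\|x\|<1$ with the correct (strict) inequality, which is exactly what submultiplicativity provides. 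No completeness or other topological input beyond the metric structure is needed.
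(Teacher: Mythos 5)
Your proof is correct and follows essentially the same route as the paper: the same decomposition $z=u\bigl(1-u^{-1}(u-z)\bigr)$, the same choice of radius $\rho=\|u^{-1}\|^{-1}$, submultiplicativity of $\|\cdot\|$ to get the factor into $B_1$, and Proposition~\ref{inv} to invert it. The extra remarks on $\|u^{-1}\|\ne 0$ and the strictness of the inequality are fine but not points of divergence.
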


\begin{proof} We need to prove that for all 
$a\in\Inv(\overline{\Kset}_f)$ there exists $r>0$ such that $B_r(a)\subset
\Inv(\overline{\Kset}_f)$. Let $a\in\Inv(\overline{\Kset}_f)$, 
defines $r:=\|a^{-1}\|^{-1}$ and let $z\in B_r(a)$. Then $\|a^{-1}(z-a)\| \leq \|a^{-1}\|\|z-a\|
< r^{-1}r =1$. Therefore $\|a^{-1}(z-a)\|<1$ and hence, by Proposition
\ref{inv}, we have that
$(1-a^{-1}(a-z))\in\Inv(\overline{\Kset}_f)$. Since 
$z=a(1-a^{-1}(a-z))$ it follows that $z\in\Inv(\overline{\Kset}_f)$.
\end{proof}

\begin{The}\label{ole-1} {\tmsamp{Let $\mathfrak{I}$ be a proper ideal of
  $\overline{\Kset}_f$. Then for each $x\in\mathfrak{I}$ we have that 
  $D(1,x)\geq 1$ and $D(1,\mathfrak{I})=1$. Hence every maximal ideal
  of $\overline{\Kset}_f$ is closed and thus it is also a rare set.}}
\end{The}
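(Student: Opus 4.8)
The plan is to proceed in three stages. First, I would establish that every element $x$ of a proper ideal $\mathfrak{I}$ satisfies $D(1,x)\geq 1$. The key observation is the contrapositive: if $D(1,x)<1$, then writing $x = 1-(1-x)$ we have $\|1-x\| = D(1,x) < 1$, so $(1-x)\in B_1$. By Proposition \ref{inv} applied to $1-x\in B_1$, the element $1-(1-x)=x$ lies in $\Inv(\overline{\Kset}_f)$. But a proper ideal cannot contain a unit, a contradiction. Hence $D(1,x)\geq 1$ for all $x\in\mathfrak{I}$.

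Second, I would deduce $D(1,\mathfrak{I})=1$. The inequality $D(1,\mathfrak{I})=\inf_{x\in\mathfrak{I}}D(1,x)\geq 1$ is immediate from the first stage. For the reverse, note that $0\in\mathfrak{I}$ (every ideal contains zero), and $D(1,0)=\|1\|=1$ by Corollary \ref{norma}(v). Therefore $D(1,\mathfrak{I})\leq 1$, giving equality.

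Third, I would prove that a maximal ideal $\mathfrak{M}$ is closed. Since $\overline{\Kset}_f$ is a topological ring, the closure $\overline{\mathfrak{M}}$ is again an ideal. By stage one, $D(1,x)\geq 1$ for every $x\in\mathfrak{M}$; passing to the closure, $D(1,y)\geq 1$ for every $y\in\overline{\mathfrak{M}}$ as well, since $D(1,\cdot)$ is continuous and the set $\{y : D(1,y)\geq 1\}$ is closed (equivalently, $B_1(1)\cap\mathfrak{M}=\emptyset$ forces $B_1(1)\cap\overline{\mathfrak{M}}=\emptyset$ because $B_1(1)$ is open). In particular $1\notin\overline{\mathfrak{M}}$, so $\overline{\mathfrak{M}}$ is a proper ideal containing $\mathfrak{M}$; by maximality $\overline{\mathfrak{M}}=\mathfrak{M}$, i.e. $\mathfrak{M}$ is closed.

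Finally, for the rarity (nowhere density) of $\mathfrak{M}$, since $\mathfrak{M}$ is closed it suffices to show it has empty interior. If $\mathfrak{M}$ contained a ball $B_r(a)$ for some $a\in\mathfrak{M}$ and $r>0$, then by translation invariance of $D$ it would contain $B_r(0)$, hence an element $z$ with $\|z\| < \min\{r,1\}$; applying the argument of Corollary \ref{aberto} (or directly: $\|1\cdot z\| = \|z\| < 1$ so $1-z'$-type reasoning via Proposition \ref{inv}), one finds that $\mathfrak{M}$ would contain elements arbitrarily close to being units, and more to the point $\mathfrak{M}+B_r(0)=\overline{\Kset}_f$ would force $\mathfrak{M}=\overline{\Kset}_f$, contradicting properness. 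I expect the only mildly delicate point to be the passage from "$B_1(1)$ misses $\mathfrak{M}$" to "$B_1(1)$ misses $\overline{\mathfrak{M}}$", which is just the elementary fact that an open set disjoint from a set is disjoint from its closure; everything else is a direct application of Proposition \ref{inv} and the ultrametric estimates in Corollary \ref{norma}.
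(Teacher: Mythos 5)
Your first three stages are correct and are essentially the paper's own argument: the contrapositive via Proposition \ref{inv} gives $D(1,x)\ge 1$ on a proper ideal, $D(1,0)=\|1\|=1$ gives $D(1,\mathfrak{I})=1$, and the observation that the open ball $B_1(1)$ misses $\mathfrak{M}$, hence misses $\overline{\mathfrak{M}}$, so that the (ideal) closure is proper and equals $\mathfrak{M}$ by maximality, is exactly how the paper concludes closedness.

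The gap is in your final step, the nowhere-density. From $B_r(0)\subset\mathfrak{M}$ you conclude that $\mathfrak{M}$ contains an element $z$ with $\|z\|<\min\{r,1\}$ and "elements arbitrarily close to being units," but this is no contradiction: every ideal contains $0$ and elements of arbitrarily small norm (e.g.\ $\dot{\alpha}_n m$ for $m\in\mathfrak{M}$), and by Lemma \ref{boll} every element of $B_1$ is a non-unit, so small norm never forces invertibility; Proposition \ref{inv} concerns $1-z$, not $z$, and $1-z\in\mathfrak{M}$ is not available. Likewise the assertion $\mathfrak{M}+B_r(0)=\overline{\Kset}_f$ is unjustified — indeed if $B_r(0)\subset\mathfrak{M}$ then $\mathfrak{M}+B_r(0)=\mathfrak{M}$. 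What is actually needed is a scaling argument with the units $\dot{\alpha}_n$: if $B_r(0)\subset\mathfrak{M}$, take any $y\in\overline{\Kset}_f$ and $n$ with $e^{-n}\|y\|<r$; then $\|\dot{\alpha}_n y\|=e^{-n}\|y\|<r$ by Corollary \ref{norma}$(iv)$, so $\dot{\alpha}_n y\in B_r(0)\subset\mathfrak{M}$, and since $\dot{\alpha}_n\in\Inv(\overline{\Kset}_f)$ we get $y\in\mathfrak{M}$, i.e.\ $\mathfrak{M}=\overline{\Kset}_f$, contradicting properness. (This is exactly the content of Proposition \ref{ole-2}$(i)$ — no proper open ideals — which is what the paper implicitly relies on for the "rare" claim; the paper's own proof is silent here and defers to it.) With that replacement your argument is complete; everything before it stands as written.
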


\begin{proof} If $x\in\mathfrak{I}$ then  
$x\notin\Inv(\overline{\Kset}_f)$ and so $D(1,x)=\|1-x\|\ge 1$. 
If $D(1,x)=\|1-x\|< 1$ then, from Proposition \ref{inv}, we have 
that $1-(1-x)=x\in\Inv(\overline{\Kset}_f)$, a contradiction. Since 
$D(1,0)=\|1\|=1$, the first part is proved.  
If $\mathfrak{m}$ is a maximal ideal of $\overline{\Kset}_f$ then, 
from the first part, we have that $1\notin\overline{\mathfrak{m}}$ 
and we are done.
\end{proof}

\begin{Lem}\label{feixo}{\tmsamp{
\begin{enumerate}
\item[$(i)$] $0\in\overline{\Inv(\overline{\Kset}_f)}$.
\item[$(ii)$] For each $x\in\overline{\Kset}_f$, if $r=-\V(x)$, then
  $y=\dot{\alpha}_rx\in S_1$, i.e., $\|y\|=1$.
\item[$(iii)$] If $x\in\Inv(\overline{\Kset}_f)$, then $\V(x)+\V(x^{-1})\le 0$. 
\end{enumerate}}}
\end{Lem}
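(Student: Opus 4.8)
The plan is to prove the three statements of Lemma~\ref{feixo} in order, since each is short and relies only on earlier material.

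For part $(i)$, the idea is to exhibit an explicit sequence of units converging to $0$. The subgroup $H=\{\dot\alpha_r \mid r\in\Rset\}$ consists of units, and by Corollary~\ref{norma}$(iv)$ we have $\|\dot\alpha_r\| = e^{-r}\|1\| = e^{-r}$. Letting $r\to+\infty$ gives $\|\dot\alpha_r\|\to 0$, i.e. $\dot\alpha_r\to 0$ in the sharp topology. Hence $0\in\overline{\Inv(\overline{\Kset}_f)}$. (Equivalently one can invoke that $1\in\Inv(\overline{\Kset}_f)$ and note $\dot\alpha_r\cdot 1$ is a unit for every $r$.)

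For part $(ii)$, this is a direct computation using Proposition~\ref{valor}$(iii)$ and the definition of $D$. If $r=-\V(x)$, then $\V(y)=\V(\dot\alpha_r x)=r+\V(x)=-\V(x)+\V(x)=0$, so $\|y\|=D(y,0)=\exp(-\V(y))=\exp(0)=1$, i.e. $y\in S_1$. One small caveat: this presumes $\V(x)$ is finite, i.e. $x\notin\mathcal{N}_f(\Kset)$ by Proposition~\ref{valor}$(v)$; if $x=0$ the statement is vacuous or should be read with the convention that it applies to nonzero $x$, and I would add a word to that effect.

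For part $(iii)$, the key inequality is the submultiplicativity of $\V$ from Proposition~\ref{valor}$(ii)$: $\V(xy)\ge\V(x)+\V(y)$. If $x\in\Inv(\overline{\Kset}_f)$ then $x x^{-1}=1$, so $\V(x)+\V(x^{-1})\le \V(xx^{-1})=\V(1)$. It remains to observe $\V(1)=0$: indeed $\|1\|=1$ by Corollary~\ref{norma}$(v)$, and $\|1\|=\exp(-\V(1))$ forces $\V(1)=0$. Combining gives $\V(x)+\V(x^{-1})\le 0$. I do not anticipate any real obstacle here; the only point requiring a little care is making the finiteness/nonzero hypotheses explicit in part $(ii)$ so that the valuation manipulations are legitimate, and ensuring in part $(i)$ that we are genuinely using elements of $\Inv(\overline{\Kset}_f)$ rather than merely of $\overline{\Kset}_f$.
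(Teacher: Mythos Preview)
Your proof is correct and follows essentially the same approach the paper intends: the paper defers to \cite{AJ}, but the footnote in the proof of Theorem~\ref{impor} confirms that $(i)$ is meant to be proved via the sequence $\{\dot\alpha_n\}$, and Lemma~\ref{fato} restates $(ii)$ (with the explicit hypothesis $x\ne 0$ you rightly flag) as an immediate consequence of Corollary~\ref{norma}$(iv)$, which is equivalent to your use of Proposition~\ref{valor}$(iii)$. Your argument for $(iii)$ via $\V(1)=0$ and Proposition~\ref{valor}$(ii)$ is the standard one.
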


\begin{proof} The proof is as in \cite{AJ}.
\end{proof}

Another results that is analogous to one of \cite{AJ} is

\begin{Pro}\label{ole-2}{\tmsamp{
\begin{enumerate}
\item[$(i)$] $\overline{\Kset}_f$ does not have proper open ideals.
\item[$(ii)$] No topological $\overline{\Kset}_f$-modules have proper
  open submodules.
\item[$(iii)$] If $X$ is a Hausdorff topological
  $\overline{\Kset}_f$-modules then for all $x\in X, ~x\ne 0$, the set
  $$\Inv(\overline{\Kset}_f).x:=\{\lambda
  x|\lambda\in\Inv(\overline{\Kset}_f)\}$$ is not a bounded
  set. Whence, $\Inv(\overline{\Kset}_f)$ is not a bounded subset of
  $\overline{\Kset}_f$.
\item[$(iv)$] A given $B\subset\overline{\Kset}_f$ is bounded iff $B$
  is $D$-bounded.
\item[$(v)$] The only $\overline{\Kset}_f$-topological module which is
  bounded is $\{0\}$. Whence, the only
  $\overline{\Kset}_f$-topological module which is compact is $\{0\}$ 
\end{enumerate}}}
\end{Pro}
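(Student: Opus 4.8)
The plan is to prove the five items essentially in sequence, since each leans on the earlier ones together with the results already established above (especially Proposition \ref{inv}, Corollary \ref{aberto}, Lemma \ref{feixo}, and Proposition \ref{naoe}). For $(i)$: suppose $\mathfrak{I}$ is a proper ideal that is open. Then $\mathfrak{I}$ contains some ball $B_r$ with $r>0$, hence (choosing an integer $k$ with $e^{-k}<r$) it contains every $x$ with $\V(x)\ge k$. But for any $y\in\overline{\Kset}_f$ we have $\V(\dot\alpha_k y)=k+\V(y)$ by Proposition \ref{valor}$(iii)$, so $\dot\alpha_k y\in\mathfrak{I}$; multiplying by the unit $\dot\alpha_{-k}$ gives $y\in\mathfrak{I}$, so $\mathfrak{I}=\overline{\Kset}_f$, a contradiction. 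For $(ii)$: if $N$ were a proper open submodule of a topological $\overline{\Kset}_f$-module $X$, then $N$ contains an open neighbourhood $U$ of $0$; by continuity of scalar multiplication we can shrink to find an open ball $B_r\subset\overline{\Kset}_f$ and a neighbourhood $V\ni 0$ in $X$ with $B_r\cdot V\subset N$. Since $\dot\alpha_{-k}\cdot B_r$ covers all of $\overline{\Kset}_f$ (same rescaling argument as in $(i)$), and $N$ is a submodule, we get $\overline{\Kset}_f\cdot V\subset N$, hence $V\subset N$; but a submodule containing an open neighbourhood of $0$ is open, and then $N=X$ by the same absorption argument applied inside $X$.

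For $(iii)$: fix $x\in X$, $x\ne 0$, in a Hausdorff topological $\overline{\Kset}_f$-module. If $\Inv(\overline{\Kset}_f)\cdot x$ were bounded, then in particular $\{\dot\beta_s\cdot x \mid s\in\Rset_+^*\}$ would be bounded (the $\dot\beta_s$ are units by construction), hence so would $\{\dot\alpha_k\cdot(\text{anything in the orbit})\}$; but applying boundedness to the sequence $\dot\alpha_{-n}\cdot x$ and using that scalar multiplication by a null sequence of scalars sends a bounded set to $0$, one forces $x$ to lie in the closure of $\{0\}$, i.e. $x=0$ by Hausdorffness — contradiction. Taking $X=\overline{\Kset}_f$ and $x=1$ yields that $\Inv(\overline{\Kset}_f)$ itself is unbounded. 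For $(iv)$: one inclusion is immediate since $D$-balls are neighbourhoods of the points concerned. For the converse, if $B$ is bounded then for the neighbourhood $B_1$ of $0$ there is $\lambda\in\Inv(\overline{\Kset}_f)$ with $\lambda B\subset B_1$; choosing instead a $\dot\beta_s$ (which exhausts all small positive norms) and using $\|\dot\beta_s y\|=s\|y\|$ from Corollary \ref{norma}$(iv)$, we bound $\sup_{y\in B}\|y\|$ by $s^{-1}$, so $B$ is $D$-bounded. Finally $(v)$ combines $(iii)$ and $(iv)$: a bounded topological module $X$ has every orbit $\Inv(\overline{\Kset}_f)\cdot x$ bounded, which by $(iii)$ forces $x=0$; and a compact module is bounded (its image under any continuous seminorm-like gauge is compact hence bounded, or directly: compact subsets of a topological group are bounded), hence trivial, and conversely $\{0\}$ is obviously compact.

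The main obstacle is item $(iii)$ — more precisely, pinning down the right notion of ``bounded set'' in a topological $\overline{\Kset}_f$-module and showing that multiplication by the null sequence of scalars $\dot\alpha_{-n}$ (equivalently $\dot\beta_{s}$ with $s\to 0$) really does drag a bounded set into every neighbourhood of $0$, so that Hausdorffness can be invoked. The subtlety is that $\overline{\Kset}_f$ is not a topological vector space (Proposition \ref{naoe}$(i)$), so the usual ``absorbing'' arguments for TVS boundedness are not available verbatim; one must work with the genuine module structure and exploit that the specific units $\dot\alpha_r$ (and $\dot\beta_s$) scale the norm by a factor tending to $0$, as recorded in Corollary \ref{norma}$(iv)$. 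Once the boundedness notion is fixed so that $(iv)$ holds, $(iii)$ and $(v)$ follow cleanly, and $(i)$, $(ii)$ are then straightforward rescaling arguments.
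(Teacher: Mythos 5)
Your proposal is correct and follows the route the paper intends: the paper gives no proof of this proposition, deferring to the analogous result in \cite{AJ}, and your rescaling argument with the units $\dot{\alpha}_r$ and $\dot{\beta}_s$ (absorption for $(i)$--$(ii)$, the orbit-plus-Hausdorff argument for $(iii)$, the norm-scaling identity $\|\dot{\beta}_s y\|=s\|y\|$ for $(iv)$, and compact-implies-bounded for $(v)$) is exactly that standard approach. Two small touch-ups: in $(i)$ the exponent $k$ must be chosen depending on $y$ (which is possible since $\V(y)>-\infty$ for every moderate $y$), and in $(v)$ the boundedness of a compact module should be justified via joint continuity of the scalar multiplication $\overline{\Kset}_f\times X\to X$ and a finite subcover, rather than by the additive group structure alone.
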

 
\begin{The}\label{ident}{\tmsamp{Let $\mathfrak{m}$ be a maximal ideal 
of $\overline{\Kset}_f$ and $L:=\overline{\Kset}_f/\mathfrak{m}$. 
Then  $\Kset$ can be identified with a proper subfield of $L$, i.e., $L$
is a proper field extension of $\Kset$.}}
\end{The}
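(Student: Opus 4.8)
The plan is to show two things: first, that the natural $\Kset$-algebra structure on $L:=\overline{\Kset}_f/\mathfrak{m}$ makes $\Kset$ inject into $L$, so that $\Kset$ is (isomorphic to) a subfield of $L$; and second, that this subfield is proper, i.e. $\Kset\ne L$. For the first point I would argue that the composite map $\Kset\hookrightarrow\overline{\Kset}_f\to L$ is a ring homomorphism from a field, hence is either zero or injective; it is not zero because $1\in\Kset$ maps to $1+\mathfrak{m}\ne 0$ in $L$ (the ideal $\mathfrak{m}$ being proper). Thus $\Kset$ is a subfield of $L$ and $L$ is a field extension of $\Kset$. This part is essentially formal.

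The substance is the properness. The idea is to exhibit an element of $\overline{\Kset}_f$ whose class in $L$ cannot lie in the image of $\Kset$, and the natural candidate is $\dot\alpha_1$ (or more generally $\dot\alpha_r$ for suitable $r$). By Corollary \ref{norma}(iv) we have $\|\dot\alpha_1\|=e^{-1}\|1\|=e^{-1}<1$, so $\V(\dot\alpha_1)=1>0$, and in particular $\dot\alpha_1\in B_1$, hence $\dot\alpha_1\approx 0$ by Lemma \ref{boll}(ii)(a). Now suppose for contradiction that the class of $\dot\alpha_1$ in $L$ equals the class of some $a\in\Kset$; then $\dot\alpha_1-a\in\mathfrak{m}$. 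I would split into two cases according to whether $a=0$ or $a\ne 0$. If $a\ne 0$, then by Corollary \ref{norma}(v)--(vi) and the ultrametric inequality, $\|\dot\alpha_1-a\|=\max\{\|a\|,\|\dot\alpha_1\|\}=\max\{1,e^{-1}\}=1$ — wait, that alone does not give a contradiction, so the cleaner route is: $\dot\alpha_1-a\in\mathfrak{m}$ together with $a\in\Kset^*\subset\Inv(\overline{\Kset}_f)$ and $\|\dot\alpha_1\|<1=\|a\|$ forces, via Proposition \ref{inv} applied to $x=a^{-1}\dot\alpha_1$ (note $\|a^{-1}\dot\alpha_1\|=e^{-1}<1$), that $a-\dot\alpha_1=a(1-a^{-1}\dot\alpha_1)\in\Inv(\overline{\Kset}_f)$, contradicting $a-\dot\alpha_1\in\mathfrak{m}$ proper. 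If $a=0$, then $\dot\alpha_1\in\mathfrak{m}$; but then I would derive a contradiction by showing $\dot\alpha_1$ is not a zero divisor in a way incompatible with lying in a maximal ideal — more precisely, one checks that no maximal ideal can contain $\dot\alpha_1$. The clean argument here: $\dot\alpha_1\dot\alpha_{-1}=\dot\alpha_0=1$ by the group property of $H$ (or directly from \eqref{artigo-1}), so $\dot\alpha_1\in\Inv(\overline{\Kset}_f)$, hence $\dot\alpha_1\notin\mathfrak{m}$, contradiction. So in fact the case $a=0$ is immediate and it is the case $a\ne 0$ that needs the Neumann-series argument above.

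So the streamlined proof is: the class $\bar x$ of $\dot\alpha_1$ in $L$ is nonzero (since $\dot\alpha_1\in\Inv(\overline{\Kset}_f)$, its image in the field $L$ is a unit, in particular nonzero), yet I claim $\bar x\notin\Kset$. If $\bar x=a\in\Kset$, then $a\ne 0$ since $\bar x\ne 0$, and $\dot\alpha_1-a\in\mathfrak{m}$; but $\|a^{-1}\dot\alpha_1\|=e^{-1}\|a^{-1}\|\,\|\dot\alpha_1\|=e^{-1}<1$ using Corollary \ref{norma}(iii)--(v), so by Proposition \ref{inv} the element $1-a^{-1}\dot\alpha_1$ is a unit, whence $a-\dot\alpha_1=a(1-a^{-1}\dot\alpha_1)$ is a unit, contradicting its membership in the proper ideal $\mathfrak{m}$. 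Therefore $\bar x\in L\setminus\Kset$ and $L\supsetneq\Kset$.

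I expect the main obstacle — really a point requiring care rather than a genuine difficulty — to be pinning down precisely \emph{which} element witnesses properness and organizing the unit/ideal bookkeeping so that no circularity creeps in; once $\dot\alpha_1$ (an invertible element with norm $<1$) is chosen, everything reduces to Corollary \ref{norma} and Proposition \ref{inv}. One should also remark that the same argument shows $L$ is never algebraic of bounded type over $\Kset$ in any cheap way, but that is beyond what the statement asks; for the proof as stated, the single element $\dot\alpha_1$ suffices.
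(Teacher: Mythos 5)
Your proof is correct, but it takes a genuinely different route from the paper's. The paper argues by contradiction from the assumption $L=\pi(\Kset)$, i.e.\ $\overline{\Kset}_f=\Kset+\mathfrak{m}$: it invokes Theorem \ref{ole-1} (maximal ideals are closed), Proposition \ref{ole-2}$(i)$ (no proper open ideals) and the discreteness of $\Kset$ to conclude that $\mathfrak{m}\cup\Kset$ is closed with empty interior, hence there is some $x\in B_1$ with $x\notin\mathfrak{m}\cup\Kset$; writing $x=k_x-m_x$ and factoring $m_x=k_x(1-k_x^{-1}x)$, Proposition \ref{inv} turns $m_x$ into a unit inside $\mathfrak{m}$, a contradiction. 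You bypass all of this topological input by exhibiting an explicit, universal witness: $\dot{\alpha}_1$ is invertible (so $\dot{\alpha}_1\notin\mathfrak{m}$ and its class in $L$ is nonzero) and has $\|\dot{\alpha}_1\|=e^{-1}<1$; if its class equalled $\pi(a)$ with $a\in\Kset^*$, then $\|a^{-1}\dot{\alpha}_1\|=\|\dot{\alpha}_1\|=e^{-1}<1$ by Corollary \ref{norma}$(iii)$--$(iv)$, so $a-\dot{\alpha}_1=a(1-a^{-1}\dot{\alpha}_1)$ would be a unit lying in $\mathfrak{m}$, impossible. Both arguments pivot on the same Neumann-series lemma (Proposition \ref{inv}); what yours buys is a concrete element of $L\setminus\Kset$ valid simultaneously for every maximal ideal and independence from the closedness/openness results of Section 2, while the paper's version, modelled on \cite{AJ}, establishes the stronger-looking fact that the ball $B_1$ cannot be covered by $\mathfrak{m}\cup\Kset$ and keeps the exposition parallel to the simplified-algebra case. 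One bookkeeping slip in your write-up: the chain should read $\|a^{-1}\dot{\alpha}_1\|=\|\dot{\alpha}_1\|=e^{-1}\|1\|=e^{-1}$ (Corollary \ref{norma}$(iii)$ then $(iv)$--$(v)$), not $e^{-1}\|a^{-1}\|\|\dot{\alpha}_1\|$; this does not affect the argument, since all that is needed is $\|a^{-1}\dot{\alpha}_1\|<1$.
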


\begin{proof} We follows the proof of \cite{AJ}. 
Let $\pi:\overline{\Kset}_f\to L:=\overline{\Kset}_f/\mathfrak{m}$ 
be the canonic application. Then
 $k:=\pi(\Kset)\simeq \Kset$. In fact, if $L=k$, then
$\overline{\Kset}_f=\Kset+\mathfrak{m}$. But $\Kset$ is a discrete
subset of $\overline{\Kset}_f$ and hence, from Theorem \ref{ole-1}
and the Proposition \ref{ole-2} $(i)$, it follows immediately that
$\mathfrak{m}\cup \Kset$ is a closed set with empty interior. 
Thus, there exists $x\in B_1$, such that
$x\notin\mathfrak{m}\cup\Kset$. Write $x=k_x-m_x$, where
$k_x\in\Kset$ and $m_x\in\mathfrak{m}$. Obviously $k_x\ne 0$ and hence
$m_x=k_x-x=k_x(1-k_x^{-1}x)$. Indeed, 
$(1-k_x^{-1}x)=k_x^{-1}m_x\in\mathfrak{m}$ and so 
$(1-k_x^{-1}x)\in\mathfrak{m}$. Since $\|k_x^{-1}x\|=\|x\|<1$,it
follows that $m_x$ is a unit, a contradiction.
\end{proof}

\begin{Lem}\label{fato} {\tmsamp{Let $R_1, R_2$ be positive real
numbers and set $r:=\ln(R_1)-\ln(R_2)$. Then
$\dot{\alpha}_r.S_{R_1}=S_{R_2}$. In particular, if 
$x\in\overline{\Kset}_f\setminus\{0\}$ and $r=-\V(x)$ then 
$\dot{\alpha}_rx\in S_1$.}} 
\end{Lem}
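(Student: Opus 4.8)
The plan is to identify, for $R \in \Rset_+^*$, the $D$-sphere $S_R$ with the set $\{z \in \overline{\Kset}_f : \|z\| = R\}$ and to reduce the claim entirely to the norm identity $\|\dot{\alpha}_r z\| = e^{-r}\|z\|$ furnished by Corollary~\ref{norma}$(iv)$. First I would establish the inclusion $\dot{\alpha}_r \cdot S_{R_1} \subseteq S_{R_2}$: if $\|z\| = R_1$, then Corollary~\ref{norma}$(iv)$ gives $\|\dot{\alpha}_r z\| = e^{-r} R_1$, and since $r = \ln(R_1) - \ln(R_2)$ we have $e^{-r} = R_2/R_1$, whence $\|\dot{\alpha}_r z\| = R_2$, i.e. $\dot{\alpha}_r z \in S_{R_2}$.

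For the reverse inclusion I would use that $\dot{\alpha}_r$ is a unit of $\overline{\Kset}_f$ with inverse $\dot{\alpha}_{-r}$. This is immediate from~(\ref{artigo-1}) (or directly from the definition of $\hat{\dot{\alpha}}_r$): for every $\varphi \in \mathcal{A}_0(\Kset)$ one has $\hat{\dot{\alpha}}_r(\varphi)\,\hat{\dot{\alpha}}_{-r}(\varphi) = (i(\varphi))^{r}(i(\varphi))^{-r} = 1$, so $\dot{\alpha}_r \dot{\alpha}_{-r} = 1$. Applying the inclusion just proved with $R_1$ and $R_2$ interchanged — note that this replaces $r$ by $\ln(R_2) - \ln(R_1) = -r$ — gives $\dot{\alpha}_{-r} \cdot S_{R_2} \subseteq S_{R_1}$, and multiplying by the unit $\dot{\alpha}_r$ yields $S_{R_2} = \dot{\alpha}_r \dot{\alpha}_{-r} \cdot S_{R_2} \subseteq \dot{\alpha}_r \cdot S_{R_1}$. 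Combining the two inclusions gives $\dot{\alpha}_r \cdot S_{R_1} = S_{R_2}$.

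For the final assertion, let $x \in \overline{\Kset}_f \setminus \{0\}$. Then $x \notin \mathcal{N}_f(\Kset)$, so $\V(x) \in \Rset$ by Proposition~\ref{valor}$(v)$, and from the definition of $D$ we get $\|x\| = D(x,0) = \exp(-\V(x)) \in \Rset_+^*$. Taking $R_1 := \|x\|$ and $R_2 := 1$ we have $r := \ln(R_1) - \ln(R_2) = \ln\|x\| = -\V(x)$, and since $x \in S_{R_1}$ the first part gives $\dot{\alpha}_r x \in S_{R_2} = S_1$.

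I do not expect a genuine obstacle here: the argument is essentially a one-line computation with the sharp norm, and the only two points needing a moment's care are invoking the invertibility of $\dot{\alpha}_r$ (in order to get the non-trivial inclusion $S_{R_2} \subseteq \dot{\alpha}_r \cdot S_{R_1}$) and checking that $r = -\V(x)$ is a well-defined real number, i.e. that $\|x\| \neq 0$ and $\V(x) \neq +\infty$; both of these are guaranteed by $x \neq 0$ together with Corollary~\ref{norma}$(ii)$ and Proposition~\ref{valor}$(v)$.
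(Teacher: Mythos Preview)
Your argument is correct and follows exactly the route the paper indicates: the paper's own proof is the single line ``This is an immediate consequence of Corollary~\ref{norma}$(iv)$,'' and your write-up is simply a careful unpacking of that consequence, including the invertibility of $\dot{\alpha}_r$ needed for the reverse inclusion. Nothing is missing and nothing differs in spirit from the paper.
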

\begin{proof} This is an immediate consequence of Corollary 
\ref{norma} $(iv)$.
\end{proof}

Another easily deducible fact is that $\overline{\Kset}_f$ has no 
non-zero nilpotent elements and hence its nil-radical is trivial. 
Consequently $\overline{\Kset}_f$ is contained in a product of
integral domains.
\section{Characteristic functions}\label{sec-03} 
In this section we study the group of the units of
the ring of Colombeau's full generalized numbers
$\overline{\Kset}_f$, as well as its prime and maximal ideals. 
The crucial step in this study is to make a careful analysis 
of the set of zeros of a representative of an element of 
$\overline{\Kset}_f$. Using this we study some special type 
of characteristic functions showing that they are
related with the prime and maximal ideals of $\overline{\Kset}_f$. 
One of the main consequence of this analysis is that the set of 
the unit group of $\overline{\Kset}_f$ is open and dense in the 
sharp topology. The new feature here is the density since we have 
already proved it to be open. The idea behind the proof is closely 
related to Aragona and Juriaans paper (\cite{AJ}). However there 
is a crucial difference.

It is useful for the reader to recall the definition of the sets 
$\mathcal{S}$ and $\mathit{P}_*(\mathcal{S}_f)$ defined in
\cite{AJ} (Definition 4.1 pg. 2217). We will use this notation in 
what follows. For $A\subset\mathcal{A}_0(\Kset)$ let $A^c$ be its 
complement in $\mathcal{A}_0(\Kset)$ and denote by
$\hat{\mathcal{X}}_A$ the characteristic function of $A$ with domain 
$\mathcal{A}_0(\Kset)$, i.e.,
\begin{displaymath}
\hat{\mathcal{X}}_A(\varphi)=\left\{\begin{array}{ll}
1, & \mbox{se $\varphi\in A$}\\
0, & \mbox{se $\varphi\notin A$, i.e., $\varphi\in A^c$}.
\end{array}\right.
\end{displaymath}

This is clearly a moderate function. Its class in $\overline{\Kset}_f$ 
is denoted  by $\mathcal{X}_A$ and still called the characteristic 
function of $A\subset\mathcal{A}_0(\Kset)$.
\begin{Def} {\tmsamp{Define
  $$\mathcal{S}_f:=\{A\subset\mathcal{A}_0(\Kset)|\forall~p\in\Nset,
  \exists~\varphi\in\mathcal{A}_p(\Kset)~\mbox{such
    that}~\{\varepsilon|\varphi_\varepsilon\in A\}\in \mathcal{S}\},$$
  where $\mathcal{S}:=\{S\subset I|0\in\overline{S}\cap\overline{S^c}\}$.}}
\end{Def}

\begin{Pro}\label{caos}{\tmsamp{ 
\begin{enumerate}
\item[$(i)$] $A\in\mathcal{S}_f\Leftrightarrow A^c\in\mathcal{S}_f$. 
\item[$(ii)$] $A\in\mathcal{S}_f\Rightarrow \emptyset\ne
  A\cap\mathcal{A}_p(\Kset)\subsetneqq \mathcal{A}_p(\Kset), ~\forall
  ~p\in\Nset$. In particular, we have
  $A\cap\mathcal{A}_p(\Kset)\ne\emptyset$ and
  $A^c\cap\mathcal{A}_p(\Kset)\ne\emptyset$ for all $p\in\Nset$.
\end{enumerate}}}
\end{Pro}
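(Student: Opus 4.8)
The plan is to prove both parts directly from the definitions of $\mathcal{S}_f$ and $\mathcal{S}$, exploiting the symmetry of these definitions under complementation.

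For part $(i)$, the key observation is that the defining condition for $\mathcal{S}$, namely $0\in\overline{S}\cap\overline{S^c}$, is manifestly symmetric in $S$ and $S^c$ (since $(S^c)^c=S$). So first I would record that $S\in\mathcal{S}\Leftrightarrow S^c\in\mathcal{S}$. Next, for a fixed $\varphi\in\mathcal{A}_p(\Kset)$, I would verify the set identity $\{\varepsilon\mid\varphi_\varepsilon\in A^c\}=\{\varepsilon\mid\varphi_\varepsilon\in A\}^c$ (complement taken in $I$), which holds because $\varphi_\varepsilon\in A^c$ iff $\varphi_\varepsilon\notin A$. Combining these two facts: $A\in\mathcal{S}_f$ says that for every $p$ there is $\varphi\in\mathcal{A}_p(\Kset)$ with $\{\varepsilon\mid\varphi_\varepsilon\in A\}\in\mathcal{S}$; by the two observations this is equivalent to saying that for every $p$ there is $\varphi\in\mathcal{A}_p(\Kset)$ with $\{\varepsilon\mid\varphi_\varepsilon\in A^c\}\in\mathcal{S}$, i.e.\ $A^c\in\mathcal{S}_f$. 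I should take a moment to address a subtlety: one must check that $\varphi_\varepsilon$ still makes sense and ranges over the relevant scaled copies, but since the same $\varphi$ works for both $A$ and $A^c$, there is nothing extra to prove here.

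For part $(ii)$, suppose $A\in\mathcal{S}_f$ and fix $p\in\Nset$. By definition there is $\varphi\in\mathcal{A}_p(\Kset)$ with $S:=\{\varepsilon\mid\varphi_\varepsilon\in A\}\in\mathcal{S}$, so $0\in\overline{S}\cap\overline{S^c}$. In particular $S\neq\emptyset$ (otherwise $0\notin\overline{S}=\overline{\emptyset}$) and $S^c\neq\emptyset$ in $I$ (otherwise $0\notin\overline{S^c}$). From $S\neq\emptyset$ pick $\varepsilon_0$ with $\varphi_{\varepsilon_0}\in A$; one then observes that $\varphi_{\varepsilon_0}\in\mathcal{A}_p(\Kset)$ — this uses the standard fact, part of the Colombeau formalism, that the scaling $\varphi\mapsto\varphi_\varepsilon$ preserves each $\mathcal{A}_q(\Kset)$ (the vanishing-moment conditions $\int_0^\infty x^{j/m}\varphi(x)\,dx=0$ are invariant under $\varphi\mapsto\varphi_\varepsilon$, after the change of variables, up to a positive factor). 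Hence $\varphi_{\varepsilon_0}\in A\cap\mathcal{A}_p(\Kset)$, giving $A\cap\mathcal{A}_p(\Kset)\neq\emptyset$. Symmetrically, from $S^c\neq\emptyset$ pick $\varepsilon_1$ with $\varphi_{\varepsilon_1}\notin A$; then $\varphi_{\varepsilon_1}\in\mathcal{A}_p(\Kset)\setminus A$, so $A\cap\mathcal{A}_p(\Kset)\subsetneqq\mathcal{A}_p(\Kset)$ and $A^c\cap\mathcal{A}_p(\Kset)\neq\emptyset$. The ``in particular'' statement is then just the conjunction of what has been shown, for all $p$.

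The main obstacle I anticipate is the bookkeeping around the scaling operation $\varphi\mapsto\varphi_\varepsilon$: one must be sure that (a) for $\varphi\in\mathcal{A}_p(\Kset)$ and $\varepsilon\in I$ the scaled function $\varphi_\varepsilon$ lies in $\mathcal{A}_p(\Kset)$ as well — so that membership ``$\varphi_\varepsilon\in A\cap\mathcal{A}_p(\Kset)$'' is automatic once $\varphi_\varepsilon\in A$ — and (b) that the set $\{\varepsilon\mid\varphi_\varepsilon\in A\}$ is a genuine subset of $I$ so that the condition $\in\mathcal{S}$ is meaningful. Both are part of the standard Colombeau machinery recalled implicitly in Section~\ref{sec-01}, so in the write-up I would simply invoke the invariance of $\mathcal{A}_q(\Kset)$ under scaling and move on; everything else is the elementary topology of closures on $I$ and the complementation symmetry already exploited in part $(i)$.
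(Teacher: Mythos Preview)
Your proposal is correct and follows essentially the same route as the paper: for $(i)$ you use the complementation symmetry $S\in\mathcal{S}\Leftrightarrow S^c\in\mathcal{S}$ together with the identity $\{\varepsilon\mid\varphi_\varepsilon\in A^c\}=\{\varepsilon\mid\varphi_\varepsilon\in A\}^c$, exactly as the paper does (the paper cites \cite{AJ} for the first fact while you derive it directly from $0\in\overline{S}\cap\overline{S^c}$); for $(ii)$ both arguments pick $\varepsilon$'s from the nonempty $S$ and $S^c$ and invoke the invariance of $\mathcal{A}_p(\Kset)$ under the scaling $\varphi\mapsto\varphi_\varepsilon$ to land the scaled test functions in $A\cap\mathcal{A}_p(\Kset)$ and $A^c\cap\mathcal{A}_p(\Kset)$ respectively.
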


\begin{proof} $(i)$ Indeed, if $A\in\mathcal{S}_f$, then for all
$p\in\Nset$ there 
exists $\varphi\in\mathcal{A}_p(\Kset)$ such that
$\{\varepsilon|\varphi_\varepsilon\in A\}\in\mathcal{S}$. Thus, from 
(Proposition 4.2 item b pg. 2217 of \cite{AJ}), we
have that
$\{\varepsilon|\varphi_\varepsilon\in A\}^c\in\mathcal{S}$; but
$\{\varepsilon|\varphi_\varepsilon\in
A\}^c=\{\varepsilon|\varphi_\varepsilon\notin
A\}=\{\varepsilon|\varphi_\varepsilon\in A^c\}$ and therefore 
$\{\varepsilon|\varphi_\varepsilon\in A^c\}\in\mathcal{S}$. Hence for
all $p\in\Nset$ there exists $\varphi\in\mathcal{A}_p(\Kset)$ such that
$\{\varepsilon|\varphi_\varepsilon\in A^c\}\in\mathcal{S}$ and,
therefore, $A^c\in\mathcal{S}_f$. Reciprocally, if
$A^c\in\mathcal{S}_f$, then $A=(A^c)^c\in\mathcal{S}_f$.

$(ii)$ If $A\in\mathcal{S}_f$,
then for all $p\in\Nset$, there exists
$\varphi\in\mathcal{A}_p(\Kset)$ such that
the set $A_\varepsilon:=\{\varepsilon|\varphi_\varepsilon\in
A\}\in\mathcal{S}$. As
$\{\varphi_\varepsilon|\varepsilon\in
A_\varepsilon\}\subseteq\mathcal{A}_p(\Kset)$ and
$\{\varphi_\varepsilon|\varepsilon\in A_\varepsilon\}\subseteq A$,it 
follows that $A\cap\mathcal{A}_p(\Kset)\ne\emptyset$. 
We now prove that $A\cap\mathcal{A}_p(\Kset)\subsetneqq
\mathcal{A}_p(\Kset)$. That it is contained is obvious. Now that
$A\cap\mathcal{A}_p(\Kset)\ne\mathcal{A}_p(\Kset)$ 
follows from the fact that $\{\varphi_\varepsilon|\varepsilon\in
A_\varepsilon^c\}\subset\mathcal{A}_p(\Kset)$ 
and $\{\varphi_\varepsilon|\varepsilon\in
A_\varepsilon^c\}\subset A^c \in \mathcal{S}_f$.
This implies that $A^c\cap\mathcal{A}_p(\Kset)\ne\emptyset$. Hence, 
$A\cap\mathcal{A}_p(\Kset)\subsetneqq \mathcal{A}_p(\Kset)$.
\end{proof}

We have the following obvious result.
\begin{Pro}\label{direta} {\tmsamp{Let $A\in\mathcal{S}_f$. 
Then $\Ann(\mathcal{X}_A)=\overline{\Kset}_f\mathcal{X}_{A^c}$ and
$\overline{\Kset}_f=\Ann(\mathcal{X}_A)\oplus\Ann(\mathcal{X}_{A^c})$.
Moreover, for each prime ideal $\mathfrak{p}$ of $\overline{\Kset}_f$
we have that either $\mathcal{X}_A$ or
$\mathcal{X}_{A^c}=1-\mathcal{X}_A$ belongs to $\mathfrak{p}$.}} 
\end{Pro}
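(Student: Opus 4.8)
The plan is to recognize that $\mathcal{X}_A$ is an idempotent of $\overline{\Kset}_f$ whose complementary idempotent is precisely $\mathcal{X}_{A^c}$, and then to read off all three assertions from elementary idempotent algebra. Working at the level of representatives one has, for every $\varphi\in\mathcal{A}_0(\Kset)$, the pointwise identities $\hat{\mathcal{X}}_A(\varphi)^2=\hat{\mathcal{X}}_A(\varphi)$, $\hat{\mathcal{X}}_A(\varphi)+\hat{\mathcal{X}}_{A^c}(\varphi)=1$ and $\hat{\mathcal{X}}_A(\varphi)\,\hat{\mathcal{X}}_{A^c}(\varphi)=0$, simply because these functions take only the values $0$ and $1$ and $A,A^c$ partition $\mathcal{A}_0(\Kset)$; the constant function $1$ represents $1\in\overline{\Kset}_f$. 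Passing to classes modulo $\mathcal{N}_f(\Kset)$ gives
$$\mathcal{X}_A^2=\mathcal{X}_A,\qquad \mathcal{X}_A+\mathcal{X}_{A^c}=1,\qquad \mathcal{X}_A\,\mathcal{X}_{A^c}=0,$$
so $e:=\mathcal{X}_A$ and $1-e=\mathcal{X}_{A^c}$ are orthogonal idempotents adding up to $1$.

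For the first identity, the inclusion $\overline{\Kset}_f\mathcal{X}_{A^c}\subseteq\Ann(\mathcal{X}_A)$ is immediate from $\mathcal{X}_{A^c}\mathcal{X}_A=0$, while if $x\mathcal{X}_A=0$ then $x=x\cdot 1=x\mathcal{X}_A+x\mathcal{X}_{A^c}=x\mathcal{X}_{A^c}\in\overline{\Kset}_f\mathcal{X}_{A^c}$; hence $\Ann(\mathcal{X}_A)=\overline{\Kset}_f\mathcal{X}_{A^c}$, and by symmetry $\Ann(\mathcal{X}_{A^c})=\overline{\Kset}_f\mathcal{X}_A$. For the direct sum, given $x\in\overline{\Kset}_f$ the splitting $x=x\mathcal{X}_A+x\mathcal{X}_{A^c}$ writes $x$ as a sum of an element of $\overline{\Kset}_f\mathcal{X}_A=\Ann(\mathcal{X}_{A^c})$ and one of $\overline{\Kset}_f\mathcal{X}_{A^c}=\Ann(\mathcal{X}_A)$, so these two ideals span $\overline{\Kset}_f$; and if $x=y\mathcal{X}_A=z\mathcal{X}_{A^c}$ lies in both, then $x=x\mathcal{X}_A=(z\mathcal{X}_{A^c})\mathcal{X}_A=z(\mathcal{X}_{A^c}\mathcal{X}_A)=0$, so the sum is direct. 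Finally, for a prime ideal $\mathfrak{p}$, from $\mathcal{X}_A\,\mathcal{X}_{A^c}=0\in\mathfrak{p}$ and primeness one gets $\mathcal{X}_A\in\mathfrak{p}$ or $\mathcal{X}_{A^c}\in\mathfrak{p}$, and $\mathcal{X}_{A^c}=1-\mathcal{X}_A$ was noted above.

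I do not expect a genuine obstacle here: the whole content is the pointwise idempotent identity on representatives, which is purely combinatorial. The only point worth recording is that the hypothesis $A\in\mathcal{S}_f$ is exactly what makes the decomposition non-degenerate: by Proposition \ref{caos} one has $A\cap\mathcal{A}_p(\Kset)$ and $A^c\cap\mathcal{A}_p(\Kset)$ nonempty for all $p$, and in fact the defining property of $\mathcal{S}_f$ together with Lemma \ref{novo} (applied with $r=0$ to the representatives $\hat{\mathcal{X}}_A$ and $\hat{\mathcal{X}}_{A^c}$, using Proposition \ref{valor}$(v)$) shows neither $\mathcal{X}_A$ nor $\mathcal{X}_{A^c}$ lies in $\mathcal{N}_f(\Kset)$; thus both are nonzero idempotents, the two annihilators are proper nonzero ideals, and for a prime $\mathfrak{p}$ exactly one of $\mathcal{X}_A,\mathcal{X}_{A^c}$ belongs to it.
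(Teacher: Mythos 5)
Your proof is correct and is exactly the standard complementary-idempotent argument ($\mathcal{X}_A+\mathcal{X}_{A^c}=1$, $\mathcal{X}_A\mathcal{X}_{A^c}=0$) that the paper has in mind when it states this result without proof as ``obvious''; the annihilator identity, the direct-sum splitting $x=x\mathcal{X}_A+x\mathcal{X}_{A^c}$, and the prime-ideal dichotomy all follow just as you wrote them. Your closing remark correctly identifies the only non-routine point, namely that $A\in\mathcal{S}_f$ forces $\mathcal{X}_A,\mathcal{X}_{A^c}\notin\{0,1\}$ (this is Proposition \ref{cara}(iii) in the paper), so nothing is missing.
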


\begin{Not} {\tmsamp{In what follows the symbol $\mathit{P}(\mathcal{S}_f)$
denotes the set of all subsets of $\mathcal{S}_f$.}} 
\end{Not}

\begin{Def}\label{def} {\tmsamp{We denote by
      $\mathit{P}_*(\mathcal{S}_f)$ 
the set of all
  $\mathcal{F}\in \mathit{P}(\mathcal{S}_f)$ verifying the 
  following conditions: 
\begin{enumerate}
\item[$(i)$] For every $A\in\mathcal{S}_f$, either $A$ or $A^c$
  belongs to $\mathcal{F}$ but not both.
\item[$(ii)$] If $A,B\in \mathcal{F}$ then $A\cup B\in\mathcal{F}$.
\end{enumerate}}}
\end{Def}

The second condition says that $\mathcal{F}$ is stable under finite
union. The following proposition enumerates some properties of the
functions $\mathcal{X}_A$. 
\begin{Pro}\label{cara}{\tmsamp{
\begin{enumerate}
\item[$(i)$] If $A\in\mathcal{S}_f$ then $\mathcal{X}_A\in S_1$. 
\item[$(ii)$] If $A, B\in \mathcal{F}, ~A\ne B$ and
    $\mathcal{X}_A\ne\mathcal{X}_B$ then 
    $\dis(\mathcal{X}_A,\mathcal{X}_B)=\|\mathcal{X}_A-\mathcal{X}_B\|=1$. 
Hence the topology of $\overline{\Kset}_f$ does not have an enumerable base.
 \item[$(iii)$] If $A\in\mathcal{S}_f$ then
    $\mathcal{X}_A\in\overline{\Kset}_f\setminus\{0,1\}$ and 
$\mathcal{X}_A=\mathcal{X}_A^2$. 
\item[$(iv)$] $(1-\mathcal{X}_A)\mathcal{X}_A=0$.
\end{enumerate}}}
\end{Pro}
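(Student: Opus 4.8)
The plan is to derive all four items from a single dichotomy about representatives whose values lie in $\{-1,0,1\}$. Concretely, I would first prove: \emph{if $v\in\mathcal{E}_f^M(\Kset)$ satisfies $v(\varphi)\in\{-1,0,1\}$ for every $\varphi\in\mathcal{A}_0(\Kset)$, and $\bar v\in\overline{\Kset}_f$ denotes its class, then $\|\bar v\|\in\{0,1\}$, and $\|\bar v\|=0$ iff $\bar v=0$.} One direction of the norm bound is easy: for $r<0$ one has $|\varepsilon^{-r}v(\varphi_\varepsilon)|\le\varepsilon^{-r}\to 0$, so $r\in\A(\bar v)$ by Lemma \ref{novo}; hence $\V(\bar v)\ge 0$. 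For the rest, I claim $0\in\A(\bar v)$ forces $\bar v=0$: by definition $0\in\A(\bar v)$ means $\bar v\approx 0$, so there is $p\in\Nset$ with $\lim_{\varepsilon\downarrow 0}v(\varphi_\varepsilon)=0$ for every $\varphi\in\mathcal{A}_p(\Kset)$, and since $v(\varphi_\varepsilon)\in\{-1,0,1\}$ this means $v(\varphi_\varepsilon)=0$ for all sufficiently small $\varepsilon$ (depending on $\varphi$); then, for any $\gamma\in\Gamma$ and any $q\ge p$, every $\varphi\in\mathcal{A}_q(\Kset)\subseteq\mathcal{A}_p(\Kset)$ satisfies $|v(\varphi_\varepsilon)|=0\le C\varepsilon^{\gamma(q)-p}$ for small $\varepsilon$, so condition $(N)$ holds and $v\in\mathcal{N}_f(\Kset)$, i.e. $\bar v=0$. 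Finally, $\A(\bar v)$ is downward closed — if $r\in\A(\bar v)$ and $r'<r$ then $\varepsilon^{-r'}v(\varphi_\varepsilon)=\varepsilon^{r-r'}\bigl(\varepsilon^{-r}v(\varphi_\varepsilon)\bigr)\to 0$ — with supremum $\V(\bar v)$, so combining the two observations: if $\bar v\ne 0$ then $0\notin\A(\bar v)$, whence $\A(\bar v)=(-\infty,0)$, $\V(\bar v)=0$ and $\|\bar v\|=e^{-\V(\bar v)}=1$.

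With the dichotomy in hand, $(i)$ follows by applying it to $v=\hat{\mathcal{X}}_A$ once we check $\mathcal{X}_A\ne 0$. If $\mathcal{X}_A=0$ then, reading $\mathcal{X}_A\approx 0$ on the representative $\hat{\mathcal{X}}_A$, there is $p$ such that for every $\varphi\in\mathcal{A}_p(\Kset)$ one has $\hat{\mathcal{X}}_A(\varphi_\varepsilon)=0$ for all sufficiently small $\varepsilon$; equivalently $0\notin\overline{\{\varepsilon|\varphi_\varepsilon\in A\}}$, so $\{\varepsilon|\varphi_\varepsilon\in A\}\notin\mathcal{S}$, for \emph{every} $\varphi\in\mathcal{A}_p(\Kset)$ — contradicting $A\in\mathcal{S}_f$ for this value of $p$. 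Hence $\|\mathcal{X}_A\|=1$, i.e. $\mathcal{X}_A\in S_1$. For $(iii)$: $\mathcal{X}_A\ne 0$ by $(i)$ and Corollary \ref{norma}$(ii)$; and since $A^c\in\mathcal{S}_f$ by Proposition \ref{caos}$(i)$, applying $(i)$ to $A^c$ gives $\|1-\mathcal{X}_A\|=\|\mathcal{X}_{A^c}\|=1\ne 0$, so $\mathcal{X}_A\ne 1$; thus $\mathcal{X}_A\in\overline{\Kset}_f\setminus\{0,1\}$, and $\mathcal{X}_A^2=\mathcal{X}_A$ holds already between representatives because $\hat{\mathcal{X}}_A(\varphi)^2=\hat{\mathcal{X}}_A(\varphi)$ for each $\varphi$. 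Then $(iv)$ is immediate: $(1-\mathcal{X}_A)\mathcal{X}_A=\mathcal{X}_A-\mathcal{X}_A^2=0$ (or, at the level of representatives, $\hat{\mathcal{X}}_{A^c}\hat{\mathcal{X}}_A\equiv 0$ since $A\cap A^c=\emptyset$).

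For $(ii)$, put $u:=\mathcal{X}_A-\mathcal{X}_B$, the class of $\hat{\mathcal{X}}_A-\hat{\mathcal{X}}_B$, which is $\{-1,0,1\}$-valued ($1$ on $A\setminus B$, $-1$ on $B\setminus A$, $0$ elsewhere). The dichotomy applies, and since $\mathcal{X}_A\ne\mathcal{X}_B$ means $u\ne 0$, we get $\dis(\mathcal{X}_A,\mathcal{X}_B)=\|u\|=1$. (Only $\mathcal{X}_A\ne\mathcal{X}_B$ is needed; the hypotheses $A,B\in\mathcal{F}$ and $A\ne B$ play no role in this identity.) For the closing remark, distinct characteristic functions $\mathcal{X}_A$ lie pairwise at sharp-distance $1$, so the nonempty open balls $B_{1/2}(\mathcal{X}_A)$ are pairwise disjoint; since uncountably many distinct $\mathcal{X}_A$ can be produced (cf. \cite{AJ}), every base of $\tau_{sf}$ must be uncountable — which is also consistent with the non-separability stated in Proposition \ref{naoe}$(ii)$.

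All the computations are light; the step that does the work — and the one I would write out with care — is ``$0\in\A(\bar v)\Rightarrow\bar v=0$'' in the dichotomy, which relies on the precise form of condition $(N)$ (an eventually vanishing $\{-1,0,1\}$-valued function is null for \emph{any} choice of $\gamma\in\Gamma$), together with the complementary fact that $A\in\mathcal{S}_f$ is exactly what keeps $\hat{\mathcal{X}}_A$, and $\hat{\mathcal{X}}_A-\hat{\mathcal{X}}_B$ when $\mathcal{X}_A\ne\mathcal{X}_B$, from being a null function.
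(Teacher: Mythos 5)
Your proof is correct and takes essentially the same route as the paper: both arguments compute $\A(\mathcal{X}_A)$, resp.\ $\A(\mathcal{X}_A-\mathcal{X}_B)$, to be $]-\infty,0[$ via Lemma \ref{novo} and conclude $\V=0$, hence norm $1$; your dichotomy for $\{-1,0,1\}$-valued representatives merely makes explicit the step the paper asserts without detail (that association to zero forces such a representative to be eventually zero, hence null, which is what lets $A\in\mathcal{S}_f$, resp.\ $\mathcal{X}_A\ne\mathcal{X}_B$, exclude every $r\ge 0$), and it supplies the short arguments for $(iii)$--$(iv)$ that the paper calls obvious. Like the paper, you leave the existence of uncountably many distinct $\mathcal{X}_A$ (needed for the ``no enumerable base'' remark) to a citation rather than a construction, which matches the paper's own level of detail.
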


\begin{proof} $(i)$ To show that $\|\mathcal{X}_A\|=1$
observe that $\dot{\alpha}_{-r}\approx 0\Leftrightarrow r<0.$ (see \cite{NTCA})
Thus, if $r\in \A(\mathcal{X}_A)$ (see Definition \ref{valuacao}), then
$\dot{\alpha}_{-r}\mathcal{X}_A\approx 0$ and, therefore,
$\dot{\alpha}_{-r}\approx 0$ and so $A(\mathcal{X}_A)=]-\infty, 0[$. 
Hence  
$\|\mathcal{X}_A\|=e^{-\V(\mathcal{X}_A)}=e^{-\sup(A(\mathcal{X}_A))}=e^0=1.$

$(ii)$ Let $r\in A(\mathcal{X}_A-\mathcal{X}_B)$. 
Then $\dot{\alpha}_{-r}(\mathcal{X}_A-\mathcal{X}_B)\approx 0$. 
As  $\mathcal{X}_A\ne \mathcal{X}_B$ it follows that 
$\dot{\alpha}_{-r}\approx 0$ and 
$A(\mathcal{X}_A-\mathcal{X}_B)=]-\infty, 0[$. Hence, 
$\|\mathcal{X}_A-\mathcal{X}_B\|=e^{-\V(\mathcal{X}_A-\mathcal{X}_B)}
=e^{-\sup(A(\mathcal{X}_A-\mathcal{X}_B))}=e^0=1.$

The $(iii)$ and $(iv)$ are obvious. 
\end{proof}

\begin{Def}\label{degf} {\tmsamp{For 
$\mathcal{F}\in\mathit{P}_*(\mathcal{S}_f)$ let 
  $g_f(\mathcal{F}):=\langle\{\mathcal{X}_A|A\in\mathcal{F}\}\rangle$
be the ideal of $\overline{\Kset}_f$ generated by the set of the
characteristic functions of $A$ with $A\in\mathcal{F}$.}} 
\end{Def}

The following is an easy to result.
\begin{Lem} {\tmsamp{If $\mathcal{F}\in\mathit{P}_*(\mathcal{S}_f)$ and
  $A,B\in\mathcal{F}$ then $\mathcal{X}_{A\cap B}, \mathcal{X}_{A\cup
    B}\in g_f(\mathcal{F})$.}}
\end{Lem}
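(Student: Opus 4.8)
The plan is to prove both containments by exhibiting explicit linear combinations of the generators $\mathcal{X}_C$ ($C\in\mathcal{F}$) that equal $\mathcal{X}_{A\cap B}$ and $\mathcal{X}_{A\cup B}$. The starting observation is purely set-theoretic: for characteristic functions one always has, pointwise on $\mathcal{A}_0(\Kset)$,
\begin{equation}\label{eq-charfn}
\hat{\mathcal{X}}_{A\cup B}=\hat{\mathcal{X}}_A+\hat{\mathcal{X}}_B-\hat{\mathcal{X}}_A\hat{\mathcal{X}}_B=\hat{\mathcal{X}}_A+\hat{\mathcal{X}}_B-\hat{\mathcal{X}}_{A\cap B},
\end{equation}
and $\hat{\mathcal{X}}_{A\cap B}=\hat{\mathcal{X}}_A\hat{\mathcal{X}}_B$. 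Passing to classes in $\overline{\Kset}_f$ gives $\mathcal{X}_{A\cap B}=\mathcal{X}_A\mathcal{X}_B$ and $\mathcal{X}_{A\cup B}=\mathcal{X}_A+\mathcal{X}_B-\mathcal{X}_A\mathcal{X}_B$ in the ring.

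First I would handle $\mathcal{X}_{A\cap B}$: since $A\in\mathcal{F}$ we have $\mathcal{X}_A\in g_f(\mathcal{F})$ by definition of the generating set, and $g_f(\mathcal{F})$ is an ideal, so $\mathcal{X}_{A\cap B}=\mathcal{X}_A\mathcal{X}_B=\mathcal{X}_B\cdot\mathcal{X}_A\in g_f(\mathcal{F})$. This step needs nothing beyond the ideal property and \eqref{eq-charfn}. For $\mathcal{X}_{A\cup B}$, note that condition (ii) in Definition \ref{def} gives $A\cup B\in\mathcal{F}$ directly, hence $\mathcal{X}_{A\cup B}\in g_f(\mathcal{F})$ immediately from the definition of $g_f(\mathcal{F})$ as the ideal generated by $\{\mathcal{X}_C\mid C\in\mathcal{F}\}$; alternatively one reads it off \eqref{eq-charfn} together with the membership of $\mathcal{X}_A$, $\mathcal{X}_B$, and $\mathcal{X}_{A\cap B}$ just established. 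Either route works, and I would present the direct one as primary and the formula-based one as a remark, since the latter does not even use stability under union and therefore shows $\mathcal{X}_{A\cap B}\in g_f(\mathcal{F})$ under weaker hypotheses.

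There is essentially no obstacle here — the only point requiring a moment's care is making sure that the identities \eqref{eq-charfn} genuinely descend from $\mathcal{E}_f^M(\Kset)$ to $\overline{\Kset}_f$, which is automatic because the quotient map $\mathcal{E}_f^M(\Kset)\to\overline{\Kset}_f$ is a ring homomorphism and the identities hold already at the level of representatives (indeed for arbitrary set-theoretic characteristic functions, independently of whether $A,B\in\mathcal{S}_f$). Thus the lemma is genuinely "easy" as the text states, and the proof is three or four lines; I would keep it terse.
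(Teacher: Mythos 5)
Your proof is correct and is exactly the argument the paper intends (the paper states the lemma without proof, calling it easy): $\mathcal{X}_{A\cap B}=\mathcal{X}_A\mathcal{X}_B$ lies in $g_f(\mathcal{F})$ because $\mathcal{X}_A$ is a generator and the set is an ideal, while $A\cup B\in\mathcal{F}$ by condition $(ii)$ of the definition of $\mathit{P}_*(\mathcal{S}_f)$, so $\mathcal{X}_{A\cup B}$ is itself a generator. Your remark that the intersection case needs neither stability under unions nor $A\cap B\in\mathcal{S}_f$ is also accurate, since $\mathcal{X}_C$ is defined for arbitrary $C\subset\mathcal{A}_0(\Kset)$.
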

 
\begin{Lem}\label{idpro} {\tmsamp{If 
$\mathcal{F}\in \mathit{P}_*(\mathcal{S}_f)$ then
$g_f(\mathcal{F})$ is a proper ideal of $\overline{\Kset}_f$, i.e.,
  $g_f(\mathcal{F})\ntrianglelefteq\overline{\Kset}_f$.}}
\end{Lem}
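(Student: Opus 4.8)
The goal is to show that $g_f(\mathcal{F})$, the ideal generated by $\{\mathcal{X}_A \mid A \in \mathcal{F}\}$, is proper, i.e.\ $1 \notin g_f(\mathcal{F})$. The plan is to use Theorem~\ref{ole-1}: a proper ideal is exactly one all of whose elements $x$ satisfy $D(1,x) \geq 1$; equivalently, by Lemma~\ref{boll}, it suffices to show that no element of $g_f(\mathcal{F})$ lies in $B_1$, which in turn means no element $x \in g_f(\mathcal{F})$ has $\mathrm{V}(1-x) > 0$. So I would argue that for every $x \in g_f(\mathcal{F})$ one has $\|1 - x\| = 1$, or even more directly that $g_f(\mathcal{F})$ contains no unit by exhibiting, for each such $x$, a family $\mathcal{A}_p(\Kset)$ of test functions on which a representative of $x$ vanishes identically (so $x$ cannot be invertible, by the standard characterization of units).

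The key structural point is that a general element of $g_f(\mathcal{F})$ is a finite sum $x = \sum_{i=1}^n c_i \mathcal{X}_{A_i}$ with $c_i \in \overline{\Kset}_f$ and $A_i \in \mathcal{F}$. Using the preceding Lemma (that $\mathcal{X}_{A \cup B} \in g_f(\mathcal{F})$ when $A, B \in \mathcal{F}$) together with condition~(ii) of Definition~\ref{def} (stability of $\mathcal{F}$ under finite union) and the idempotent/multiplicative relations $\mathcal{X}_A \mathcal{X}_B = \mathcal{X}_{A\cap B}$, $\mathcal{X}_A^2 = \mathcal{X}_A$ from Proposition~\ref{cara}, I would first reduce to the case $x = c\,\mathcal{X}_A$ for a single $A = A_1 \cup \cdots \cup A_n \in \mathcal{F}$: indeed every element of $g_f(\mathcal{F})$ is a multiple of $\mathcal{X}_A$ for a suitable single $A \in \mathcal{F}$, since $\mathcal{X}_{A_i} = \mathcal{X}_{A_i}\mathcal{X}_A$ once $A \supseteq A_i$. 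Now the crucial observation is that $A \in \mathcal{S}_f$, so by Proposition~\ref{caos}(ii), $A^c \cap \mathcal{A}_p(\Kset) \neq \emptyset$ for every $p \in \Nset$; pick $\psi_p \in A^c \cap \mathcal{A}_p(\Kset)$. On the value $\hat{\mathcal{X}}_A(\psi_p) = 0$, and hence any representative of $c\,\mathcal{X}_A$ evaluated at $\psi_p$ — and at all scalings $(\psi_p)_\varepsilon$ that stay in $A^c$ — vanishes. More carefully: since $A \in \mathcal{F} \subseteq \mathcal{S}_f$ I would instead want a single $\varphi \in \mathcal{A}_p(\Kset)$ with $\{\varepsilon \mid \varphi_\varepsilon \in A\} \in \mathcal{S}$, whence its complement is nonempty with $0$ in its closure, so that $1 - \hat{\mathcal{X}}_A$ does not tend to $0$ along $\varepsilon \downarrow 0$ restricted to that $\varphi$; this shows $\mathrm{V}(1 - \mathcal{X}_A) \le 0$ and, multiplying through, that $c\mathcal{X}_A - 1$ cannot be in $\mathcal{N}_f(\Kset)$.

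Thus the argument is: if $1 \in g_f(\mathcal{F})$ then $1 = c\,\mathcal{X}_A$ in $\overline{\Kset}_f$ for some $c$ and some $A \in \mathcal{F} \subseteq \mathcal{S}_f$, so $1 - c\,\mathcal{X}_A \in \mathcal{N}_f(\Kset)$. But on the set of $\varepsilon$ (associated to a suitable $\varphi \in \mathcal{A}_p(\Kset)$ given by $A \in \mathcal{S}_f$) for which $\varphi_\varepsilon \in A^c$ — a set accumulating at $0$ — the representative of $c\,\mathcal{X}_A$ is identically $0$, so the representative of $1 - c\,\mathcal{X}_A$ is identically $1$ there, contradicting the null estimate $|1 - c(\varphi_\varepsilon)\hat{\mathcal X}_A(\varphi_\varepsilon)| \le C\varepsilon^{\gamma(q)-p}$ which would force it to $0$ as $\varepsilon \downarrow 0$. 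Hence $1 \notin g_f(\mathcal{F})$ and $g_f(\mathcal{F}) \ntrianglelefteq \overline{\Kset}_f$.

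The main obstacle is the reduction step: showing that every element of $g_f(\mathcal{F})$ is an $\overline{\Kset}_f$-multiple of a single $\mathcal{X}_A$ with $A \in \mathcal{F}$. This relies essentially on property~(ii) of Definition~\ref{def} (closure of $\mathcal{F}$ under finite unions) plus the idempotency $\mathcal{X}_A^2 = \mathcal{X}_A$ and the product rule $\mathcal{X}_{A_i}\mathcal{X}_{A_1 \cup \cdots \cup A_n} = \mathcal{X}_{A_i}$; one must check this identity holds at the level of $\overline{\Kset}_f$ (it holds pointwise on $\mathcal{A}_0(\Kset)$, hence on representatives, hence in the quotient). Once that reduction is in place, the rest is a direct application of $A \in \mathcal{S}_f$ via Proposition~\ref{caos}(ii) and the definition of $\mathcal{S}$, together with the characterization of $\mathcal{N}_f(\Kset)$.
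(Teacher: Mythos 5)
Your proposal is correct and follows essentially the same route as the paper's proof: assume $1\in g_f(\mathcal{F})$, use stability of $\mathcal{F}$ under finite unions and the identity $\mathcal{X}_{A_i}\mathcal{X}_A=\mathcal{X}_{A_i}$ (with $A=\bigcup A_i$) to reduce to a single $A\in\mathcal{F}\subset\mathcal{S}_f$, then use the definition of $\mathcal{S}_f$ to produce a sequence $\varepsilon_n\downarrow 0$ with $\varphi_{\varepsilon_n}\in A^c$, where the null estimate for the representative of $1-c\,\mathcal{X}_A$ forces $1\le C\varepsilon_n^{\gamma(q)-p}\to 0$, a contradiction. The only cosmetic difference is that the paper multiplies the relation $1=\sum a_i\mathcal{X}_{A_i}$ by $\mathcal{X}_A$ to conclude $\mathcal{X}_A=1$ before evaluating, which is equivalent to your step.
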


\begin{proof} Assume that $g_f(\mathcal{F})=\overline{\Kset}_f$. Then
there exists $a_i\in\overline{\Kset}_f$ and 
$A_i\in\mathcal{F}, ~i=1,2,\dots, n$ such that 
$1=\sum_{i=1}^na_i\mathcal{X}_{A_i}$. We define $A:=\bigcup_{i=1}^n
A_i$. Then $\mathcal{A}_0(\Kset)\ne A\in\mathcal{F}$ 
and $\forall i=1,2,\dots, n ~\mathcal{X}_{A_i}\cdot\mathcal{X}_A=
\mathcal{X}_{A_i\cap A}=\mathcal{X}_{A_i}.$ 
Multiplying both members of the former equation by 
$\mathcal{X}_A$, we obtain  $\mathcal{X}_A=\sum_{i=1}^n
a_i\mathcal{X}_{A_i}.\mathcal{X}_A=\sum_{i=1}^n
a_i\mathcal{X}_{A_i}=1.$ Hence 
$(\hat{\mathcal{X}}_A-1)\in\mathcal{N}_f(\Kset)$. 
It follows that $\exists~ p\in\Nset, ~\exists~\gamma\in\Gamma$ 
such that $\forall~q\ge p ~\mbox{e}~\forall~\varphi\in\mathcal{A}_q(\Kset)
~(\exists~ C=C_\varphi>0, ~\exists~\eta=\eta_\varphi>0$) such that
$|1-\hat{\mathcal{X}}_A(\varphi_\varepsilon)| \le C\varepsilon^{\gamma(q)-p},
~\forall~0<\varepsilon<\eta$. But
$A\in\mathcal{F}\subset\mathit{P}(\mathcal{S}_f)$ and so, for this
$\varphi$, $\{\varepsilon|\varphi_\varepsilon\in
A\}\in\mathcal{S}\Rightarrow 0\in\overline{\{\varepsilon|\varphi_\varepsilon\in
A\}}\cap\overline{\{\varepsilon|\varphi_\varepsilon\in A^c\}},$ 
i.e., there exists a sequence $\{\varepsilon_n\}_{n\in\Nset}$ 
converging to zero when $n\to\infty$ such that 
$\varphi_{\varepsilon_n}\in A^c$. It now follows that   
$1=|1-\hat{\mathcal{X}}_A(\varphi_{\varepsilon_n})|\le 
C(\varepsilon_n)^{\gamma(q)-p}$.
As $\gamma$  is divergent, we can choose $q_0\in\Nset$ such that
$\gamma(q_0)-p> 0$. Thus, we have $1\le
C(\varepsilon_n)^{\gamma(q_0)-p}\underset{n\longrightarrow\infty}{\rightarrow}
0$, a contradiction. Therefore, 
$g_f(\mathcal{F})\ntrianglelefteq\overline{\Kset}_f$. 
\end{proof}

\begin{The}\label{prime} {\tmsamp{Let $\mathfrak{p}$ be a prime ideal of
  $\overline{\Kset}_f$. Then there exists a unique  
  $\mathcal{F}_\mathfrak{p}\in\mathit{P}_*(\mathcal{S}_f)$ such that
  $g_f(\mathcal{F}_\mathfrak{p})\subset\mathfrak{p}$. In particular, 
$\mathit{P}_*(\mathcal{S}_f)\ne\emptyset$.}}
\end{The}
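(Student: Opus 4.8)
The plan is to construct $\mathcal{F}_\mathfrak{p}$ explicitly from $\mathfrak{p}$ and then verify the three things needed: that it lies in $\mathit{P}_*(\mathcal{S}_f)$, that $g_f(\mathcal{F}_\mathfrak{p})\subset\mathfrak{p}$, and that it is the unique such family. The natural candidate, motivated by Proposition \ref{direta}, is
$$\mathcal{F}_\mathfrak{p}:=\{A\in\mathcal{S}_f\mid\mathcal{X}_A\in\mathfrak{p}\}.$$
First I would check condition $(i)$ of Definition \ref{def}. Given $A\in\mathcal{S}_f$, Proposition \ref{direta} tells us that exactly one of $\mathcal{X}_A$, $\mathcal{X}_{A^c}=1-\mathcal{X}_A$ belongs to $\mathfrak{p}$: at least one does because $\mathcal{X}_A\mathcal{X}_{A^c}=0\in\mathfrak{p}$ and $\mathfrak{p}$ is prime, and not both, since otherwise their sum $1$ would lie in $\mathfrak{p}$, contradicting properness. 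Hence exactly one of $A,A^c$ is in $\mathcal{F}_\mathfrak{p}$. (Here $A^c\in\mathcal{S}_f$ by Proposition \ref{caos}$(i)$, so the statement makes sense.)

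Next I would check condition $(ii)$, stability under finite union. If $A,B\in\mathcal{F}_\mathfrak{p}$, I must show $A\cup B\in\mathcal{S}_f$ and $\mathcal{X}_{A\cup B}\in\mathfrak{p}$. For the membership in $\mathfrak{p}$, note the pointwise identity $\hat{\mathcal{X}}_{A\cup B}=\hat{\mathcal{X}}_A+\hat{\mathcal{X}}_B-\hat{\mathcal{X}}_A\hat{\mathcal{X}}_B$, which passes to $\overline{\Kset}_f$ and shows $\mathcal{X}_{A\cup B}\in\mathfrak{p}$ since each term is in $\mathfrak{p}$. The point that needs a little care is that $A\cup B\in\mathcal{S}_f$: this is where I expect the main obstacle. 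One cannot simply argue $A\cup B\in\mathcal{S}_f$ for arbitrary $A,B\in\mathcal{S}_f$ — rather one must exploit that $A$ and $B$ are \emph{both} in $\mathcal{F}_\mathfrak{p}$. Equivalently, since $A^c,B^c\notin\mathcal{F}_\mathfrak{p}$, i.e. $\mathcal{X}_{A^c},\mathcal{X}_{B^c}\notin\mathfrak{p}$, and $(A\cup B)^c=A^c\cap B^c$ with $\mathcal{X}_{A^c\cap B^c}=\mathcal{X}_{A^c}\mathcal{X}_{B^c}$, primeness of $\mathfrak{p}$ gives $\mathcal{X}_{(A\cup B)^c}\notin\mathfrak{p}$; in particular $\mathcal{X}_{(A\cup B)^c}\neq 0$ and $\mathcal{X}_{A\cup B}=1-\mathcal{X}_{(A\cup B)^c}\notin\mathfrak{p}$ is not $1$ either (as $\mathfrak{p}$ is proper and $0\notin\mathfrak{p}$ forces... actually $\mathcal{X}_{A\cup B}\ne 1$ because $\mathcal{X}_{(A\cup B)^c}\ne 0$). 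So $\mathcal{X}_{A\cup B}\in\overline{\Kset}_f\setminus\{0,1\}$ is a nontrivial idempotent, and I would then show, via Proposition \ref{cara}$(iii)$ together with the description of $\mathcal{S}_f$, that any $C\subset\mathcal{A}_0(\Kset)$ with $\mathcal{X}_C\in\overline{\Kset}_f\setminus\{0,1\}$ (more precisely with $\hat{\mathcal{X}}_C-1\notin\mathcal{N}_f$ and $\hat{\mathcal{X}}_C\notin\mathcal{N}_f$) must lie in $\mathcal{S}_f$ — unwinding: if $C\notin\mathcal{S}_f$ then for some $p$, for every $\varphi\in\mathcal{A}_p(\Kset)$ the set $\{\varepsilon\mid\varphi_\varepsilon\in C\}$ is not in $\mathcal{S}$, i.e. $0$ fails to be in the closure of that set or of its complement, which (together with the previous quantifier over larger $q$) forces $\hat{\mathcal{X}}_C$ or $\hat{\mathcal{X}}_C-1$ to be null by the argument in Lemma \ref{idpro}. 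This is the delicate estimate and I would model it on the proof of Lemma \ref{idpro}.

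Finally, $g_f(\mathcal{F}_\mathfrak{p})\subset\mathfrak{p}$ is immediate: $g_f(\mathcal{F}_\mathfrak{p})$ is generated by $\{\mathcal{X}_A\mid A\in\mathcal{F}_\mathfrak{p}\}\subset\mathfrak{p}$, so the ideal it generates is contained in $\mathfrak{p}$. For uniqueness, suppose $\mathcal{G}\in\mathit{P}_*(\mathcal{S}_f)$ also satisfies $g_f(\mathcal{G})\subset\mathfrak{p}$. Then for every $A\in\mathcal{G}$ we have $\mathcal{X}_A\in\mathfrak{p}$, so $A\in\mathcal{F}_\mathfrak{p}$, giving $\mathcal{G}\subset\mathcal{F}_\mathfrak{p}$; conversely, if $A\in\mathcal{F}_\mathfrak{p}$ but $A\notin\mathcal{G}$, then by Definition \ref{def}$(i)$ we have $A^c\in\mathcal{G}$, hence $\mathcal{X}_{A^c}\in\mathfrak{p}$; but also $\mathcal{X}_A\in\mathfrak{p}$, so $1=\mathcal{X}_A+\mathcal{X}_{A^c}\in\mathfrak{p}$ (using $\mathcal{X}_A\mathcal{X}_{A^c}=0$, so $\mathcal{X}_A+\mathcal{X}_{A^c}=\mathcal{X}_{A\cup A^c}=\mathcal{X}_{\mathcal{A}_0(\Kset)}=1$), contradicting properness. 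Hence $\mathcal{G}=\mathcal{F}_\mathfrak{p}$. The ``in particular'' clause, $\mathit{P}_*(\mathcal{S}_f)\neq\emptyset$, follows because prime ideals of $\overline{\Kset}_f$ exist (e.g. maximal ideals, which exist since $\overline{\Kset}_f$ is a nonzero commutative unital ring).
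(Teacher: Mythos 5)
Your construction of $\mathcal{F}_\mathfrak{p}=\{A\in\mathcal{S}_f\mid\mathcal{X}_A\in\mathfrak{p}\}$, the verification of condition $(i)$ of Definition \ref{def}, the inclusion $g_f(\mathcal{F}_\mathfrak{p})\subset\mathfrak{p}$, the uniqueness argument and the non-emptiness of $\mathit{P}_*(\mathcal{S}_f)$ are all fine (the paper itself omits the proof, deferring to \cite{AJ}, so this is the expected route). The genuine gap is exactly at the step you yourself flagged as delicate: union stability. Your plan rests on the claim that any $C\subset\mathcal{A}_0(\Kset)$ with $\hat{\mathcal{X}}_C\notin\mathcal{N}_f(\Kset)$ and $\hat{\mathcal{X}}_C-1\notin\mathcal{N}_f(\Kset)$ must lie in $\mathcal{S}_f$. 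That equivalence is true in the simplified setting of \cite{AJ} (where $S\in\mathcal{S}$ iff $\mathcal{X}_S\notin\{0,1\}$), but it is false for $\mathcal{S}_f$, and this is precisely the ``crucial difference'' between the simplified and full cases that Section \ref{sec-03} alludes to. The negation of $C\in\mathcal{S}_f$ says: for some $p$, \emph{every} $\varphi\in\mathcal{A}_p(\Kset)$ has its orbit $\{\varphi_\varepsilon\}$ either eventually inside $C$ or eventually outside $C$ --- but which of the two alternatives occurs may depend on $\varphi$. Nullity of $\hat{\mathcal{X}}_C$ (resp.\ of $1-\hat{\mathcal{X}}_C$) would require the \emph{same} alternative uniformly for all $\varphi\in\mathcal{A}_q(\Kset)$, $q$ large, so your unwinding ``$0$ fails to be in the closure of that set or of its complement, which forces $\hat{\mathcal{X}}_C$ or $\hat{\mathcal{X}}_C-1$ to be null'' does not go through: the disjunction sits inside the quantifier over $\varphi$ and cannot be pulled out.

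Concretely, take $C$ to be a union of scaling orbits, e.g.\ $C=\{\varphi\in\mathcal{A}_0(\Rset)\mid\varphi(0)\,i(\varphi)>c_0\}$ for a suitable threshold $c_0$: since $\varphi_\varepsilon(0)\,i(\varphi_\varepsilon)=\varphi(0)\,i(\varphi)$, every set $\{\varepsilon\mid\varphi_\varepsilon\in C\}$ is $I$ or $\emptyset$, hence $C\notin\mathcal{S}_f$; yet if $c_0$ is chosen so that both $C$ and $C^c$ meet every $\mathcal{A}_q(\Kset)$, then neither $\hat{\mathcal{X}}_C$ nor $1-\hat{\mathcal{X}}_C$ is null, so $\mathcal{X}_C\in\overline{\Kset}_f\setminus\{0,1\}$. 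Thus from $\mathcal{X}_{A\cup B}\in\mathfrak{p}$ (your inclusion--exclusion step, which is correct) and $\mathcal{X}_{(A\cup B)^c}\notin\mathfrak{p}$ you may conclude $\mathcal{X}_{A\cup B}\notin\{0,1\}$, but not $A\cup B\in\mathcal{S}_f$: non-triviality only provides, for each $p$, two possibly \emph{different} test functions, one whose orbit enters $A\cup B$ at arbitrarily small $\varepsilon$ and one whose orbit leaves it, whereas membership in $\mathcal{S}_f$ demands a single $\varphi\in\mathcal{A}_p(\Kset)$ doing both along its own orbit. So condition $(ii)$ of Definition \ref{def} for $\mathcal{F}_\mathfrak{p}$ --- the heart of the existence statement --- remains unproved by your argument and needs a different idea exploiting primeness (or the structure of $A,B\in\mathcal{S}_f$) more finely than the modelling on Lemma \ref{idpro} you propose.
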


\begin{proof} The proof is similar to that given in \cite{AJ}.
\end{proof}

Theorem \ref{prime} associates with each prime ideal
$\mathfrak{p}$ of $\overline{\Kset}_f$ a set
$\mathcal{F}_\mathfrak{p}\in\mathit{P}_*(\mathcal{S}_f)$ characterized
by the inclusion $g_f(\mathcal{F}_\mathfrak{p})\subset\mathfrak{p}$.

In what follows $\overline{g_f(\mathcal{F})}$, where
$\mathcal{F}\in\mathit{P}_*(\mathcal{S}_f)$, denotes the
$\tau_{sf}$-closure of $g_f(\mathcal{F})$ in $\overline{\Kset}_f$.  

\begin{Def} {\tmsamp{Let $x\in\overline{\Kset}_f$ and $\hat{x}$ one of
its representative. We define 
$Z(\hat{x}):=\{\varphi\in\mathcal{A}_0(\Kset)|\hat{x}(\varphi)=0\}$, 
the set of zeros of the representative $\hat{x}$ of $x$.}}
\end{Def}

\begin{Lem} {\tmsamp{Let $x\in\overline{\Kset}_f\setminus\{0\}$ and
$\mathcal{F}\in\mathit{P}_*(\mathcal{S}_f)$. Then the following
statements are equivalents: 
\begin{enumerate}
\item[$(i)$] $x\in g_f(\mathcal{F})$.
\item[$(ii)$] There exists $A\in\mathcal{F}$ such that $x\mathcal{X}_A=x$.
\end{enumerate}}}
\end{Lem}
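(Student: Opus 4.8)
The plan is to prove the two implications separately, with the easy direction being $(ii)\Rightarrow(i)$ and the substantive work lying in $(i)\Rightarrow(ii)$. For $(ii)\Rightarrow(i)$: if $x\mathcal{X}_A=x$ for some $A\in\mathcal{F}$, then since $\mathcal{X}_A$ is one of the generators of $g_f(\mathcal{F})$ by Definition \ref{degf}, we get $x=x\mathcal{X}_A\in g_f(\mathcal{F})$ immediately, because an ideal absorbs multiplication by arbitrary ring elements.

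For $(i)\Rightarrow(ii)$: suppose $x\in g_f(\mathcal{F})$. Then by definition of the generated ideal, there are finitely many $A_1,\dots,A_n\in\mathcal{F}$ and elements $a_1,\dots,a_n\in\overline{\Kset}_f$ with $x=\sum_{i=1}^n a_i\mathcal{X}_{A_i}$. First I would set $A:=\bigcup_{i=1}^n A_i$; by condition $(ii)$ of Definition \ref{def} (stability under finite union), $A\in\mathcal{F}$. The key computational observation, already used in the proof of Lemma \ref{idpro}, is that $\mathcal{X}_{A_i}\mathcal{X}_A=\mathcal{X}_{A_i\cap A}=\mathcal{X}_{A_i}$ since $A_i\subseteq A$; this uses the idempotence/multiplicativity of characteristic functions (Proposition \ref{cara}$(iii)$ and the identity $\mathcal{X}_{B}\mathcal{X}_{C}=\mathcal{X}_{B\cap C}$, which holds at the level of representatives hence in the quotient). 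Multiplying the expression $x=\sum a_i\mathcal{X}_{A_i}$ by $\mathcal{X}_A$ then gives
$$x\mathcal{X}_A=\sum_{i=1}^n a_i\mathcal{X}_{A_i}\mathcal{X}_A=\sum_{i=1}^n a_i\mathcal{X}_{A_i}=x,$$
which is exactly statement $(ii)$.

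I do not anticipate a serious obstacle here; the lemma is essentially a bookkeeping consequence of the fact that $\mathcal{F}$ is directed upward under union together with the multiplicative behavior of characteristic functions. The one point requiring a word of care is the identity $\mathcal{X}_{B}\mathcal{X}_{C}=\mathcal{X}_{B\cap C}$ and the fact that $\hat{\mathcal{X}}_{A_i}\cdot\hat{\mathcal{X}}_A=\hat{\mathcal{X}}_{A_i}$ holds \emph{on the nose} at the level of representatives (not merely modulo $\mathcal{N}_f(\Kset)$), so passing to classes in $\overline{\Kset}_f$ is unproblematic; this is already implicit in the computations of the preceding lemmas. The hypothesis $x\ne 0$ is not actually needed for either implication, but it is harmless to keep it as stated. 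It is also worth remarking that $A$ can be chosen in $\mathcal{S}_f$ (not just as a subset of $\mathcal{A}_0(\Kset)$) precisely because each $A_i\in\mathcal{F}\subset\mathit{P}(\mathcal{S}_f)\subseteq P(\mathcal{S}_f)$ and $\mathcal{S}_f$-membership, while not obviously closed under union in general, is guaranteed here by $(ii)$ of Definition \ref{def} which builds that closure into the definition of $\mathit{P}_*(\mathcal{S}_f)$.
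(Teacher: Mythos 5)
Your proposal is correct and follows essentially the same route as the paper's own proof: $(ii)\Rightarrow(i)$ by ideal absorption since $\mathcal{X}_A$ is a generator, and $(i)\Rightarrow(ii)$ by writing $x=\sum_{i=1}^n a_i\mathcal{X}_{A_i}$, taking $A=\bigcup_{i=1}^n A_i\in\mathcal{F}$ via Definition \ref{def}$(ii)$, and using $\mathcal{X}_{A_i}\mathcal{X}_A=\mathcal{X}_{A_i}$ to get $x\mathcal{X}_A=x$. Your added remarks (the identity holding at the level of representatives, and $x\ne 0$ being unnecessary) are accurate but not needed.
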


\begin{proof} $(i)\Rightarrow (ii)$: If $x\in g_f(\mathcal{F})$,
then there exists $A_1,A_2,\dots, A_n\in\mathcal{F}$ and $a_1,a_2,\dots,
a_n\in\overline{\Kset}_f$ such that $x=\sum_{i=1}^n
a_i\mathcal{X}_{A_i}.$ Definition \ref{def} $(ii)$ tells us that 
that $A:=\bigcup_{i=1}^n A_i\in\mathcal{F}$ and since $A_i\subset A,
~\forall~ i=1,2,\dots, n$ it follows that
$\mathcal{X}_{A_i}\cdot\mathcal{X}_A=\mathcal{X}_{A_i}$. Hence 
$  x\mathcal{X}_A=\left(\sum_{i=1}^n a_i\mathcal{X}_{A_i}\right)\mathcal{X}_A
=\sum_{i=1}^n (a_i\mathcal{X}_{A_i})\mathcal{X}_A
=\sum_{i=1}^n a_i(\mathcal{X}_{A_i}\mathcal{X}_A)
=\sum_{i=1}^n a_i\mathcal{X}_{A_i}=x$.
Therefore, $x\mathcal{X}_A=x$ and $A\in\mathcal{F}$.

$(ii) \Rightarrow (i)$: If $A\in\mathcal{F}$ and (see \cite{NTCA})
$x\mathcal{X}_A=x$, then $x\in g_f(\mathcal{F})$. Therefore there exists
$A\in\mathcal{F}$ and $x\in\overline{\Kset}_f$ such that 
$x=x\mathcal{X}_A$, where $\mathcal{X}_A\in g_f(\mathcal{F})$.
\end{proof}

\begin{The}{\tmsamp{
\begin{enumerate}
\item[$(i)$] $x\in\Inv(\overline{\Kset}_f)$ if and only if 
    $Z(\hat{x})\notin \mathcal{S}_f, ~\forall$ representative $\hat{x}$
    of $x$.
\item[$(ii)$] $x\notin\Inv(\overline{\Kset}_f)$ if and only if $\exists~
  e\in\overline{\Kset}_f, ~e^2=e$ such that $x\cdot e=0$. In particular, if
  $x\in\overline{\Kset}_f\setminus\{0\}$ and $x\notin\Inv(\overline{\Kset}_f)$,
  then $x$ is a zero divisor. 
\end{enumerate}}}
\end{The}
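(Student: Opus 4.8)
The plan is to prove the two equivalences by exploiting the decomposition machinery already set up, in particular Proposition \ref{direta}, Proposition \ref{caos}, and the characterization of zeros of representatives. For part $(i)$, I would argue the contrapositive in both directions. First, suppose $Z(\hat{x})\in\mathcal{S}_f$ for some representative $\hat{x}$. Then $A:=Z(\hat{x})\in\mathcal{S}_f$, and by construction $\hat{x}\cdot\hat{\mathcal{X}}_A\equiv 0$, so $x\cdot\mathcal{X}_A=0$ in $\overline{\Kset}_f$. Since $\mathcal{X}_A\neq 0$ by Proposition \ref{cara} $(iii)$ (indeed $\|\mathcal{X}_A\|=1$), $x$ is a zero divisor, hence not a unit. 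Conversely, suppose $x\notin\Inv(\overline{\Kset}_f)$; I must produce a representative $\hat{x}$ with $Z(\hat{x})\in\mathcal{S}_f$. The idea is to take any representative $\hat{x}$ and show that the moderateness failure of $1/\hat{x}$ forces, for every $p\in\Nset$, the existence of $\varphi\in\mathcal{A}_p(\Kset)$ for which $\hat{x}(\varphi_\varepsilon)$ is "small along a sequence and not small along another" — i.e. one can modify $\hat{x}$ on a set where it is small to make it genuinely vanish, producing a new representative whose zero set meets the condition defining $\mathcal{S}_f$. Concretely: if no representative has its zero set in $\mathcal{S}_f$, then for a cleverly chosen representative the function $\varphi\mapsto 1/\hat{x}(\varphi)$ (suitably regularized where $\hat{x}$ is small) is moderate, giving an inverse, a contradiction.

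For part $(ii)$, the reverse implication is immediate: if $e^2=e$ and $xe=0$ with (implicitly) $e\neq 0,1$, then $e$ cannot be a unit (an idempotent unit is $1$), and if $x$ were a unit then $e=x^{-1}xe=0$, contradiction; so $x\notin\Inv(\overline{\Kset}_f)$. For the forward implication, combine part $(i)$ with Proposition \ref{direta}: since $x\notin\Inv(\overline{\Kset}_f)$, part $(i)$ gives a representative $\hat{x}$ with $A:=Z(\hat{x})\in\mathcal{S}_f$; set $e:=\mathcal{X}_A$. By Proposition \ref{cara} $(iii)$, $e^2=e$, and $xe=0$ because $\hat{x}$ vanishes on $A$. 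The "in particular" clause then follows because, for $x\neq 0$, $e=\mathcal{X}_A\neq 0$ (Proposition \ref{cara} $(iii)$ again), so $x$ annihilates a nonzero element and is therefore a zero divisor.

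The main obstacle I anticipate is the "only if" direction of part $(i)$: turning the failure of invertibility into an actual representative with zero set in $\mathcal{S}_f$. One direction — unit implies no zero set in $\mathcal{S}_f$ — is clean, since $xy=1$ forces $\hat{x}(\varphi)\hat{y}(\varphi)$ to equal $1$ up to a null function, and Lemma \ref{idpro}-type reasoning (a null function is bounded below by a positive power of $\varepsilon$ along the relevant sequences, while $\hat{\mathcal{X}}_{Z(\hat{x})}$ would have to be $1$ there) rules out $Z(\hat{x})\in\mathcal{S}_f$ for every representative. The hard part is the converse: one must show that if \emph{every} representative has zero set outside $\mathcal{S}_f$, then $x$ is a unit. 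The natural route is to pick the "maximal" representative — e.g. redefine $\hat{x}$ to be $0$ precisely on the set where it is, in the appropriate moderate sense, null — and check first that this is still a legitimate representative (the modification lies in $\mathcal{N}_f(\Kset)$), and second that on the complement of its zero set $\hat{x}$ is bounded below by $C\varepsilon^{p}$ for suitable $p$ along all $\varphi_\varepsilon$ with $\varphi\in\mathcal{A}_p(\Kset)$; the hypothesis that this zero set is not in $\mathcal{S}_f$ is exactly what is needed to promote "bounded below along some sequence" to "bounded below for all small $\varepsilon$", after which $1/\hat{x}$ is moderate and inverts $x$. This last promotion step, reconciling the $\forall p\,\exists\varphi$ structure of $\mathcal{S}_f$ with the $\exists p\,\forall\varphi$ structure of moderateness, is where the care is required, and I would model it closely on the corresponding argument in \cite{AJ}.
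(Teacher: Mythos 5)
The easy half of $(i)$ and the deduction of $(ii)$ from $(i)$ in your plan are fine and essentially what the paper does (the paper organizes the logic the other way round: it proves the zero--divisor statement $(ii)$ directly and obtains the converse of $(i)$ from it, while you prove the converse of $(i)$ and derive $(ii)$; also, your remark that $e\ne 0$ must be implicit in $(ii)$ is a correct observation). But the heart of the theorem --- from $x\notin\Inv(\overline{\Kset}_f)$ produce a set in $\mathcal{S}_f$ killing $x$, equivalently a representative whose zero set lies in $\mathcal{S}_f$ --- is only announced in your proposal, not proved. The paper's argument here is concrete: since $1/\hat{x}$ is not moderate, for every $p\in\Nset$ there is $\varphi_p\in\mathcal{A}_p(\Kset)$ and, taking $C=n$, $\eta=1/n$ in the negation of $(M)$, a sequence $\varepsilon_n<1/n$ with $|\hat{x}((\varphi_p)_{\varepsilon_n})|<\varepsilon_n^{\,p}/n$; one then sets $B:=\{(\varphi_p)_{\varepsilon_n}\}$, checks that $B\in\mathcal{S}_f$ (each trace $\{\varepsilon_n\}$ is a null sequence, hence lies in $\mathcal{S}$), and checks that $\hat{x}\hat{\mathcal{X}}_B\in\mathcal{N}_f(\Kset)$, so that $e=\mathcal{X}_B\ne 0$ is an idempotent with $xe=0$. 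You describe exactly this idea ("small along a sequence, modify the representative there") but defer its execution --- "I would model it closely on the corresponding argument in \cite{AJ}" --- and you yourself flag the $\forall p\,\exists\varphi$ versus $\exists p\,\forall\varphi$ quantifier reconciliation as the unresolved point. That is precisely the step the proof consists of, so as it stands the proposal has a genuine gap.

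A second, related weakness: your claimed dichotomy that ``the zero set of the maximal representative is not in $\mathcal{S}_f$'' promotes a lower bound along sequences to a lower bound for all small $\varepsilon$ is not justified. Non-membership in $\mathcal{S}_f$ only says that for some $p$ every $\varphi\in\mathcal{A}_p(\Kset)$ has zero-set trace either not accumulating at $0$ or containing a whole interval $]0,\eta[$; the second case must be dealt with, and in the first case mere non-vanishing gives no quantitative bound $|\hat{x}(\varphi_\varepsilon)|\ge C\varepsilon^{N}$, which is what moderateness of $1/\hat{x}$ requires (compare Theorem \ref{mosca} and Proposition \ref{creio}, where the paper works with the smallness sets $N_a(x)$, not with literal zero sets). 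This is why the paper's route goes through the set $B$ where $\hat{x}$ is \emph{too small} (and then replaces smallness by actual vanishing via a null modification), rather than through the zero set of a single cleverly chosen representative; your plan would need that same construction spelled out to close the argument.
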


\begin{proof} $(i)$ Suppose by contradiction that
$x\in\Inv(\overline{\Kset}_f)$ and that $Z(\hat{x})\in\mathcal{S}_f$
for some representative $\hat{x}$ of $x$. Then, by Proposition
\ref{cara} $(iii)$, we have that
$\hat{\mathcal{X}}_{Z(\hat{x})}\notin\{0,1\}$. Thus
$\hat{\mathcal{X}}_{Z(\hat{x})}\ne 0$ but
$x\mathcal{X}_{Z(\hat{x})}=0$ and hence $x$ is a zero divisor,
a contradiction. 

To prove the converse it is enough to show ($ii$). 

($ii$) We will show that if 
$x\notin\Inv(\overline{\Kset}_f)$, then $x$ is a zero divisor. 
In fact let $\hat{x}$ be a representative of $x$.  
We consider two cases:\\
\begin{enumerate}
\item[($a$)] $Z(\hat{x})\in\mathcal{S}_f$.
\item[($b$)] $Z(\hat{x})\notin\mathcal{S}_f$.
\end{enumerate}

($a$) If $Z(\hat{x})\in\mathcal{S}_f$ then, by Proposition \ref{cara} 
$(iii)$, we have that $\hat{\mathcal{X}}_{Z(\hat{x})}\ne 0 $ and since 
$x\mathcal{X}_{Z(\hat{x})}=0$ we have that $x$ is a zero divisor. 
($b$) If $Z(\hat{x})\notin\mathcal{S}_f$  define
$x^*(\varphi)=\left\{\begin{array}{ll} \hat{x}(\varphi),
      &\mbox{if}~\varphi\notin Z(\hat{x})\\
0, &\mbox{if}~\varphi\in Z(\hat{x})\end{array}\right.
$, i.e., $x^*={\hat{x}}\mathcal{X}_{Z(\hat{x})^c}$. Then 
$x^*(\varphi)\ne 0$ for some $\varphi\in\mathcal{A}_0(\Kset)$ 
and $(x^*-\hat{x})\in\mathcal{N}_f(\Kset)$. So we may substitute 
$\hat{x}$ by $x^*$ and assume that $\hat{x}(\varphi)\ne 0$. 

Since $x\notin\Inv(\overline{\Kset}_f)$ it follows that
$\frac{1}{x}\notin\mathcal{E}_f^M(\Kset)$ and hence, by  
definition \ref{kset-1},  $\forall~p\in\Nset,
~\exists~\varphi\in\mathcal{A}_p(\Kset)~\mbox{such that }~\forall~C
=C_\varphi>0,~\forall~\eta=\eta_\varphi>0,
~\exists~0<\varepsilon_0<\eta~\mbox{such that }~
\left|\frac{1}{\hat{x}}(\varphi_{\varepsilon_0})\right|>C\varepsilon_0^{-p}$. 
Taking $C=n, ~\eta=\frac{1}{n}$, we have that there exists 
$0<\varepsilon_n<\frac{1}{n}$ such that 
$$\left|\frac{1}{\hat{x}}(\varphi_{\varepsilon_n})\right|>n\varepsilon_n^{-p}.$$
For this $\varphi$ we define the set
$A_p(\varphi_p):=\{\varepsilon_n|n\in\Nset\}$ and let 
$B:=\{(\varphi_p)_{\varepsilon_n}|\varepsilon_n\in
A_p(\varphi_p)\}.$ Then $B\in\mathcal{S}_f$, i.e.,
$\forall~p\in\Nset, ~exists~\varphi_p\in\mathcal{A}_p(\Kset)~\mbox{tal
  que}~\{\varepsilon_n|(\varphi_p)_{\varepsilon_n}\in
B\}\in\mathcal{S}$. Indeed, it is enough to show that 
$\{\varepsilon_n|(\varphi_p)_{\varepsilon_n}\in B\}\in\mathcal{S}$. 
For that is enough to notice that
$A_p(\varphi_p)=\{\varepsilon_n|(\varphi_p)_{\varepsilon_n}\in B\}$ and 
therefore $\emptyset\ne A_p(\varphi_p)\cap I_{\eta=1/2}\ne
I_{\eta=1/2}$ 
because $\lim_{n\to\infty}\varepsilon_n\le\lim_{n\to\infty}\frac{1}{n}=0$.
 
We now show that
  $\hat{x}\hat{\mathcal{X}}_B\in\mathcal{N}_f(\Kset)$. Definition
  \ref{kset} tells us that we need to prove that
$\exists~p\in\Nset
  ~\exists~\gamma\in\Gamma~\mbox{tal que}~\forall~q\ge p,~\forall
  ~\varphi\in\mathcal{A}_q(\Kset)~\exists~C=C(\varphi)>0,
  ~\exists~\eta=\eta(\varphi)>0~\mbox{tal que}~
  |\hat{x}\hat{\mathcal{X}}_B(\varphi_\varepsilon)|\le C\varepsilon^{\gamma(q)-p},
  ~\forall~ 0<\varepsilon<\eta$. In fact, taking $p=0$ and $
  ~\gamma(q)=q$ we have that $|\hat{x}\hat{\mathcal{X}}_B
(\varphi_\varepsilon)|\le C\varepsilon^q<1$ for small $\varepsilon$. 
Therefore, $\hat{x}\hat{\mathcal{X}}_B\in \mathcal{N}_f(\Kset)$.
\end{proof}

\begin{Def}\label{nax}{\tmsamp{
For $x\in\mathcal{E}_f^M(\Kset)$ and $a\in\Nset$ define
\begin{enumerate}
\item[$(i)$] $N_a(x):=\{\varphi\in\mathcal{A}_0(\Kset)|
|x(\varphi)|<\dot{\alpha}_a(\varphi)\}$;
\item[$(ii)$] $\hat{\mathcal{X}}_{a,x}:=\hat{\mathcal{X}}_{N_a(x)}$
e $\mathcal{X}_{a,x}:=\mathcal{X}_{N_a(x)}$.
  \end{enumerate}}}
\end{Def}

\begin{Lem}\label{antes} {\tmsamp{If $A\subseteq \mathcal{A}_0(\Kset)$ then
  $\hat{\mathcal{X}}_A\in\mathcal{N}_f(\Kset)$ if and inly if $ ~\exists~
  \tau:\mathcal{A}_0(\Kset)\to ]0,1]$ such that for all $\varphi\in
  \mathcal{A}_q(\Kset)$ we have that 
  $I_{\tau(\varphi)}\subseteq\{\varepsilon|\varphi_\varepsilon\in
  A^c\}$.}} 
\end{Lem}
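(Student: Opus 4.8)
The plan is to unwind the definition of $\mathcal N_f(\Kset)$ applied to the characteristic function $\hat{\mathcal X}_A$ and to recognise that, since $\hat{\mathcal X}_A$ takes only the values $0$ and $1$, the moderate-type estimate $|\hat{\mathcal X}_A(\varphi_\varepsilon)|\le C\varepsilon^{\gamma(q)-p}$ forces $\hat{\mathcal X}_A(\varphi_\varepsilon)=0$ for all small $\varepsilon$, i.e.\ $\varphi_\varepsilon\in A^c$ for all small $\varepsilon$. First I would prove the forward implication: assume $\hat{\mathcal X}_A\in\mathcal N_f(\Kset)$, so there are $p\in\Nset$ and $\gamma\in\Gamma$ with the property that for every $q\ge p$ and every $\varphi\in\mathcal A_q(\Kset)$ there are $C=C_\varphi>0$ and $\eta=\eta_\varphi>0$ with $|\hat{\mathcal X}_A(\varphi_\varepsilon)|\le C\varepsilon^{\gamma(q)-p}$ for $0<\varepsilon<\eta$. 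Since $\gamma$ diverges, fix one $q\ge p$ with $\gamma(q)-p>0$; then the right-hand side tends to $0$ as $\varepsilon\downarrow 0$, while the left-hand side is either $0$ or $1$. Hence there is $\eta'=\eta'_\varphi\in]0,1]$, $\eta'\le\eta$, such that $\hat{\mathcal X}_A(\varphi_\varepsilon)=0$, i.e.\ $\varphi_\varepsilon\in A^c$, for all $\varepsilon\in I_{\eta'}$. Setting $\tau(\varphi):=\eta'_\varphi$ for $\varphi\in\mathcal A_q(\Kset)$ (and, say, $\tau(\varphi):=1$ otherwise) gives the desired map $\tau:\mathcal A_0(\Kset)\to\,]0,1]$ with $I_{\tau(\varphi)}\subseteq\{\varepsilon\mid\varphi_\varepsilon\in A^c\}$ for all $\varphi\in\mathcal A_q(\Kset)$.

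For the converse I would start from a map $\tau:\mathcal A_0(\Kset)\to\,]0,1]$ such that $I_{\tau(\varphi)}\subseteq\{\varepsilon\mid\varphi_\varepsilon\in A^c\}$ for every $\varphi\in\mathcal A_q(\Kset)$, and verify condition $(N)$ directly with the choices $p:=0$ and $\gamma(n):=n+1$ (any fixed element of $\Gamma$ works). Given $q\ge 0$ and $\varphi\in\mathcal A_q(\Kset)$, put $\eta:=\tau(\varphi)$ and $C:=1$. For $0<\varepsilon<\eta$ we have $\varphi_\varepsilon\in A^c$, hence $\hat{\mathcal X}_A(\varphi_\varepsilon)=0$, so trivially $|\hat{\mathcal X}_A(\varphi_\varepsilon)|=0\le C\varepsilon^{\gamma(q)-p}$. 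This establishes $(N)$, so $\hat{\mathcal X}_A\in\mathcal N_f(\Kset)$. One should note that $\hat{\mathcal X}_A$ is moderate (it is bounded), so membership in $\mathcal N_f(\Kset)$ is meaningful.

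There is a small subtlety to address concerning the dependence of the exponent $q$ on $\varphi$. In the statement, the $\mathcal A_q(\Kset)$ appearing in the conclusion is a single fixed value $q$ (matching the $q$ chosen in the forward direction); I would make this explicit by saying: choose $q\ge p$ once and for all with $\gamma(q)>p$, and note $\mathcal A_q(\Kset)\subseteq\mathcal A_{q'}(\Kset)$ is not needed because the $\tau$ we produce is only required to work on that one $\mathcal A_q(\Kset)$, while in the converse the hypothesis about $\tau$ on $\mathcal A_q(\Kset)$ combined with the nesting $\mathcal A_q(\Kset)\supseteq\mathcal A_{q'}(\Kset)$ for $q'\ge q$ (clear from $(l)$ in the Notation) gives the estimate for all $q'\ge q$, and for $0\le q'<q$ one simply enlarges $\tau$ using that $A^c\cap\mathcal A_{q'}(\Kset)$ behaves well — but in fact the cleanest route in the converse is to take $p$ large, namely $p:=q$, so that only $q'\ge q$ occur, avoiding the issue entirely. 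I expect this bookkeeping about the index $q$ versus the quantifier ``$\forall q\ge p$'' to be the only real point requiring care; the estimates themselves are immediate from the two-valuedness of $\hat{\mathcal X}_A$ together with the divergence of $\gamma$.

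Finally, I would remark that the equivalence can be rephrased neatly: $\hat{\mathcal X}_A\in\mathcal N_f(\Kset)$ if and only if $A^c$ is ``eventually full'' along scalings of test functions in some $\mathcal A_q(\Kset)$, and in particular, combined with the definition of $\mathcal S_f$ and with Proposition \ref{caos}, it yields $\hat{\mathcal X}_A\notin\mathcal N_f(\Kset)$ whenever $A\in\mathcal S_f$, which is exactly the complement of the earlier observation that $\mathcal X_A\in S_1$ for $A\in\mathcal S_f$. I would include this consequence as a one-line corollary or remark after the proof, since it is what the lemma is presumably used for downstream.
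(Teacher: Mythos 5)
Your proof is correct and follows essentially the same route as the paper's: unwind the definition of $\mathcal{N}_f(\Kset)$ and use the two-valuedness of $\hat{\mathcal{X}}_A$ together with the divergence of $\gamma$ to force $\hat{\mathcal{X}}_A(\varphi_\varepsilon)=0$, i.e.\ $\varphi_\varepsilon\in A^c$, for all small $\varepsilon$, which defines $\tau$. Your explicit handling of the converse (taking $p:=q$ so that only $\mathcal{A}_{q'}(\Kset)\subseteq\mathcal{A}_q(\Kset)$ with $q'\ge q$ occur and the null estimate becomes trivial because the left-hand side vanishes) is sound and in fact tidier than the paper's, which leaves that direction largely implicit.
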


\begin{proof} We have that 
$\hat{\mathcal{X}}_A\in\mathcal{N}_f(\Kset)$ if and only if $
~\exists~ p\in\Nset, ~\exists~\gamma\in\Gamma ~\mbox{such
  that}~\forall~q\ge p,
~\forall~\varphi\in\mathcal{A}_q(\Kset)~\exists~ C=C_\varphi> 0,~ \exists
~\eta=\eta_\varphi>0 ~\mbox{such
  that} ~|\hat{\mathcal{X}}_A(\varphi_\varepsilon)|\le
C\varepsilon^{\gamma(q)-p}, ~\forall ~0<\varepsilon<\eta$. As
$\gamma$ is divergent, one can choose $q$ such that 
$\gamma(q)-p>2$. Now choose $\varepsilon$ such that 
$0<\varepsilon<\tau(\varphi)<\eta=\eta_\varphi$ and 
$C\varepsilon^{\gamma(q)-p}<1$. Under these conditions we have that
$|\hat{\mathcal{X}}_A(\varphi_\varepsilon)|<1\Leftrightarrow
\hat{\mathcal{X}}_A(\varphi_\varepsilon)=0\Leftrightarrow
\varphi_\varepsilon\in A^c\Leftrightarrow 
\{\varphi_\varepsilon|\varepsilon<\tau(\varphi)\}\subseteq
A^c$, i.e, $\{\varphi_\varepsilon|\varepsilon\in I_{\tau(\varphi)}\}
\subseteq A^c$ or equivalently 
$I_{\tau(\varphi)}\subseteq\{\varepsilon|\varphi_\varepsilon\in A^c\}$.
\end{proof}

\begin{Lem}\label{vert}{\tmsamp{ 
\begin{enumerate}
\item[$(i)$] $\mathcal{X}_{N_a(x)}=1, ~\forall~ a\in\Nset$ iff 
$x\in\mathcal{N}_f(\Kset)$.
\item[$(ii)$] $\mathcal{X}_{N_a(x)}=0, ~\forall ~a\in\Nset$ iff 
$x\in\Inv(\overline{\Kset}_f)$.  
\end{enumerate}}}
\end{Lem}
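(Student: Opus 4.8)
The plan is to prove each equivalence by unravelling the definitions of $N_a(x)$, $\mathcal{N}_f(\Kset)$, and $\Inv(\overline{\Kset}_f)$, and to exploit the characterization of $\mathcal{X}_A = 1$ (resp.\ $\mathcal{X}_A = 0$) in $\overline{\Kset}_f$ given by Lemma \ref{antes}. Recall from Definition \ref{nax} that $N_a(x) = \{\varphi \in \mathcal{A}_0(\Kset) \mid |x(\varphi)| < \dot{\alpha}_a(\varphi)\}$; by (\ref{artigo-1}) we have $\hat{\dot\alpha}_a(\varphi_\varepsilon) = \varepsilon^a (i(\varphi))^a$, so membership $\varphi_\varepsilon \in N_a(x)$ reads $|x(\varphi_\varepsilon)| < \varepsilon^a (i(\varphi))^a$. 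Also note $\mathcal{X}_A = 1$ means $1 - \hat{\mathcal{X}}_A = \hat{\mathcal{X}}_{A^c} \in \mathcal{N}_f(\Kset)$, which by Lemma \ref{antes} (applied to $A^c$) is equivalent to the existence of $\tau : \mathcal{A}_0(\Kset) \to {]0,1]}$ with $I_{\tau(\varphi)} \subseteq \{\varepsilon \mid \varphi_\varepsilon \in A\}$ for all $\varphi$ in the relevant $\mathcal{A}_q(\Kset)$; dually $\mathcal{X}_A = 0$ means $\hat{\mathcal{X}}_A \in \mathcal{N}_f(\Kset)$, equivalent to $I_{\tau(\varphi)} \subseteq \{\varepsilon \mid \varphi_\varepsilon \in A^c\}$.

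For part $(i)$, first I would prove the forward direction: assume $\mathcal{X}_{N_a(x)} = 1$ for every $a \in \Nset$. Fix $a$; by Lemma \ref{antes} there is $\tau_a : \mathcal{A}_0(\Kset) \to {]0,1]}$ such that for all $\varphi$ in the appropriate $\mathcal{A}_q(\Kset)$ and all $\varepsilon \in I_{\tau_a(\varphi)}$ we have $\varphi_\varepsilon \in N_a(x)$, i.e.\ $|x(\varphi_\varepsilon)| < \varepsilon^a (i(\varphi))^a$. This is exactly the moderateness-style bound needed to witness the $(N)$ condition for $x$: taking $p = 0$ and $\gamma \in \Gamma$ with, say, $\gamma(q) = q$ (or any convenient choice), the bound $|x(\varphi_\varepsilon)| \le C\varepsilon^{\gamma(q)-p}$ for small $\varepsilon$ follows because $a = \gamma(q)$ can be made arbitrarily large as $a$ ranges over $\Nset$. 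Conversely, if $x \in \mathcal{N}_f(\Kset)$, then for every $a$ the null estimate on $x$ forces $|x(\varphi_\varepsilon)|$ to be eventually smaller than $\varepsilon^a(i(\varphi))^a$ as $\varepsilon \downarrow 0$ (since the exponent $\gamma(q) - p$ eventually exceeds $a$), so $I_{\tau_a(\varphi)} \subseteq \{\varepsilon \mid \varphi_\varepsilon \in N_a(x)\}$ for a suitable $\tau_a$; Lemma \ref{antes} then gives $\hat{\mathcal{X}}_{N_a(x)^c} \in \mathcal{N}_f(\Kset)$, i.e.\ $\mathcal{X}_{N_a(x)} = 1$.

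For part $(ii)$, the forward direction: assume $\mathcal{X}_{N_a(x)} = 0$ for all $a$, so by Lemma \ref{antes} for each $a$ there is $\tau_a$ with $I_{\tau_a(\varphi)} \subseteq \{\varepsilon \mid \varphi_\varepsilon \in N_a(x)^c\}$, meaning $|x(\varphi_\varepsilon)| \ge \varepsilon^a (i(\varphi))^a$ for all small $\varepsilon$ and all $\varphi$ in the relevant class. Then $1/x$ is well-defined near $0$ (in particular $x$ has no relevant zeros eventually) and $|1/x(\varphi_\varepsilon)| \le \varepsilon^{-a}(i(\varphi))^{-a}$, giving the moderateness bound $(M)$ for $1/x$ with exponent $p = a$; hence $1/x \in \mathcal{E}_f^M(\Kset)$ and $x \in \Inv(\overline{\Kset}_f)$. (One should as usual first replace $\hat x$ by a representative not vanishing on the relevant part of the domain, exactly as in the proof of the previous theorem, using that the set of zeros lies off $\mathcal{S}_f$ in the invertible case.) Conversely, if $x \in \Inv(\overline{\Kset}_f)$ then $1/x \in \mathcal{E}_f^M(\Kset)$ satisfies $(M)$ with some exponent $p$, so $|x(\varphi_\varepsilon)| \ge C^{-1}\varepsilon^{p}$ eventually; for $a > p$ this forces $|x(\varphi_\varepsilon)| \ge \varepsilon^a(i(\varphi))^a$ for small $\varepsilon$, hence $\varphi_\varepsilon \in N_a(x)^c$, so by Lemma \ref{antes} $\hat{\mathcal{X}}_{N_a(x)} \in \mathcal{N}_f(\Kset)$, i.e.\ $\mathcal{X}_{N_a(x)} = 0$; and for $a \le p$ one concludes $\mathcal{X}_{N_a(x)} = 0$ from monotonicity, since $N_a(x) \subseteq N_{p+1}(x)$ (smaller exponent is a stronger bound, hence a smaller set) wait — one must be careful with the direction of inclusion and handle all $a$ uniformly; the clean way is to observe $N_a(x) \subseteq N_b(x)$ for $a \le b$ is false, so instead argue directly for each $a$ that the eventual bound $|x(\varphi_\varepsilon)| \ge C^{-1}\varepsilon^{p} \ge \varepsilon^a (i(\varphi))^a$ holds for all small $\varepsilon$ whenever $a$ is chosen — and for the finitely-behaving small $a$ the same inequality $C^{-1}\varepsilon^p \ge \varepsilon^a(i(\varphi))^a$ still holds eventually as $\varepsilon \downarrow 0$ precisely when $a \ge p$; for $a < p$ it holds as well since then $\varepsilon^p \le \varepsilon^a$ near $0$ and one absorbs $(i(\varphi))^a$ and the constant. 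Thus in every case $\varphi_\varepsilon \in N_a(x)^c$ eventually.

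\textbf{Main obstacle.} The delicate point is the interplay between the two quantifier orders in conditions $(M)$ and $(N)$ and the ``for all $a$'' in the statement: one must check that the single $p$ (and single $\gamma$) in the null/moderate conditions can be traded against the running parameter $a$, and conversely that having the conclusion for all $a$ is enough to manufacture a valid $\gamma \in \Gamma$. I expect the $\Leftarrow$ direction of $(i)$ and the $\Rightarrow$ direction of $(ii)$ to be where care is needed, together with the routine-but-necessary step of replacing $\hat x$ by a representative with no ``bad'' zeros, as in the proof of the preceding theorem.
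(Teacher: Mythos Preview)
Your approach is the same as the paper's: both arguments reduce everything to Lemma~\ref{antes} and verify the null condition with $p=0$ and $\gamma(q)=q$. The paper is much terser --- it writes out only the forward direction of $(i)$ and disposes of $(ii)$ with ``can be proved in a similar way'' --- so for $(i)$ and for the forward direction of $(ii)$ your plan matches exactly.

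There is, however, a genuine problem in your treatment of the $\Leftarrow$ direction of $(ii)$, and your ``wait'' correctly flags it. Your attempted repair is backwards: from $|x(\varphi_\varepsilon)|\ge C^{-1}\varepsilon^{p}$ you want $|x(\varphi_\varepsilon)|\ge \varepsilon^{a}(i(\varphi))^{a}$, and for $a<p$ the inequality $\varepsilon^{p}\le\varepsilon^{a}$ makes the known lower bound \emph{weaker}, not stronger, so the conclusion does not follow. Concretely, take $x=\dot\alpha_5\in\Inv(\overline{\Kset}_f)$ and $a=1$: then $N_1(x)=\{\varphi: (i(\varphi))^{5}<i(\varphi)\}$ contains every $\varphi_\varepsilon$ with $\varepsilon$ small, so $\mathcal{X}_{N_1(x)}=1\neq 0$. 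Thus the literal ``for all $a\in\Nset$'' in $(ii)$ cannot hold in the $\Leftarrow$ direction; what is actually true (and what your argument does prove) is $\mathcal{X}_{N_a(x)}=0$ for all sufficiently large $a$, equivalently for some $a$. The paper's own proof of $(ii)$, being just ``similar'', does not address this point either, and the downstream uses (Proposition~\ref{creio} works with a single $a$; Theorem~\ref{mosca} only extracts the existence of some $r$) only require this weaker form.
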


\begin{proof} $(i)$ Take $p=0$ and $ ~\gamma:\Nset\to\Nset$,
$\gamma(q)=q$. We have that $\mathcal{X}_{N_q(x)}=1$ if and only 
if $\hat{\mathcal{X}}_{(N_q(x))^c}=1-\hat{\mathcal{X}}_{N_q(x)}
\in\mathcal{N}_f(\Kset)$ and, by Lemma \ref{antes}, there exists
$\tau:\mathcal{A}_0(\Kset)\to ]0,1]$ such that 
$\forall \varphi\in\mathcal{A}_q(\Kset)$ 
$I_{\tau(\varphi)}\subseteq\{\varepsilon|\varphi_\varepsilon\in
N_q(x)\}$, i.e., $|x(\varphi_\varepsilon)|<
\dot{\alpha}_q(\varphi_\varepsilon)=(i(\varphi))^q\varepsilon^q,
~\forall~ 0<\varepsilon<\tau(\varphi)$. So if we setting 
$C=C_\varphi=(i(\varphi))^q>0, ~\eta=\eta_\varphi=\tau(\varphi)>0$ we get
$|x(\varphi_\varepsilon)|<(i(\varphi))^q\varepsilon^q=C\varepsilon^{\gamma(q)-p},
~\forall~ 0<\varepsilon<\eta(\varphi).$

Item $(ii)$ can be proved in a similar way.
\end{proof}

\begin{Pro}\label{creio} {\tmsamp{Let $x\in\overline{\Kset}_f, ~(x\ne
0)$ be a non-unit. Then there exists $a\in\Nset$ such that 
$S=N_a(x)\in\mathcal{S}_f$ and $|x\mathcal{X}_S|<\dot{\alpha}_a$.}}
\end{Pro}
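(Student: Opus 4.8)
The plan is to combine Lemma \ref{vert}$(ii)$ with Lemma \ref{antes} (and the definition of $\mathcal{S}_f$) to produce the required index $a$. Since $x$ is a non-unit, Lemma \ref{vert}$(ii)$ tells us that it is \emph{not} the case that $\mathcal{X}_{N_a(x)}=0$ for all $a\in\Nset$; hence there exists $a\in\Nset$ with $\mathcal{X}_{N_a(x)}\ne 0$, equivalently $\hat{\mathcal{X}}_{N_a(x)}\notin\mathcal{N}_f(\Kset)$. I would fix such an $a$ and set $S:=N_a(x)$. The first task is then to show $S\in\mathcal{S}_f$, i.e., that for every $p\in\Nset$ there is $\varphi\in\mathcal{A}_p(\Kset)$ with $\{\varepsilon\mid\varphi_\varepsilon\in S\}\in\mathcal{S}$, meaning $0$ lies in the closure of both this set and its complement in $I$.

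For the membership $S\in\mathcal{S}_f$ I would argue in two halves. Negating Lemma \ref{antes} applied to $A=S$ (using $\hat{\mathcal{X}}_S\notin\mathcal{N}_f(\Kset)$): for each candidate $\tau$ there is $q$ and $\varphi\in\mathcal{A}_q(\Kset)$ with $I_{\tau(\varphi)}\not\subseteq\{\varepsilon\mid\varphi_\varepsilon\in S^c\}$, i.e. arbitrarily small $\varepsilon$ with $\varphi_\varepsilon\in S$; running $\tau$ through the constant functions $\tau\equiv 1/n$ and using that there are only countably many $\mathcal{A}_q(\Kset)$, one extracts, for each $p$, a single $\varphi\in\mathcal{A}_p(\Kset)$ for which $0$ is in the closure of $\{\varepsilon\mid\varphi_\varepsilon\in S\}$ — so this set is nonempty with $0$ as a limit point. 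For the complementary half I would use that $|x(\varphi_\varepsilon)|$ is moderate (as $x\in\mathcal{E}_f^M(\Kset)$), so $|x(\varphi_\varepsilon)|\le C\varepsilon^{-p_0}$ for small $\varepsilon$, whereas $\dot\alpha_a(\varphi_\varepsilon)=(i(\varphi))^a\varepsilon^a$ by (\ref{artigo-1}); for $a$ large relative to the moderateness exponent this forces $|x(\varphi_\varepsilon)|\ge\dot\alpha_a(\varphi_\varepsilon)$ for all sufficiently small $\varepsilon$, i.e. $\varphi_\varepsilon\in S^c$ for small $\varepsilon$. This is the point where one must be slightly careful: the moderateness exponent $p_0$ is fixed with $x$, so I would arrange the choice of $a$ at the very start to exceed $p_0$; then both halves hold simultaneously, $0\in\overline{\{\varepsilon\mid\varphi_\varepsilon\in S\}}\cap\overline{\{\varepsilon\mid\varphi_\varepsilon\in S^c\}}$, giving $\{\varepsilon\mid\varphi_\varepsilon\in S\}\in\mathcal{S}$ for every $p$, hence $S\in\mathcal{S}_f$. (Reconciling "there exists $a$ with $\mathcal{X}_{N_a(x)}\ne 0$" with "$a$ large" is immediate, since $N_a(x)\subseteq N_{a'}(x)$ for $a\le a'$, so enlarging $a$ only enlarges $S$ and preserves $\hat{\mathcal{X}}_S\notin\mathcal{N}_f(\Kset)$.)

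Finally, the inequality $|x\mathcal{X}_S|<\dot\alpha_a$ in $\overline{\Kset}_f$ is essentially built into the definition of $S=N_a(x)$: for $\varphi\in S$ one has $|x(\varphi)|<\dot\alpha_a(\varphi)$ by Definition \ref{nax}$(i)$, while for $\varphi\in S^c$ one has $x(\varphi)\hat{\mathcal{X}}_S(\varphi)=0<\dot\alpha_a(\varphi)$; so the representative $\widehat{x\mathcal{X}_S}=\hat x\,\hat{\mathcal{X}}_S$ is pointwise strictly dominated by $\dot\alpha_a$ on all of $\mathcal{A}_0(\Kset)$, which is exactly the meaning of $|x\mathcal{X}_S|<\dot\alpha_a$ (in the sense of the order used elsewhere in the paper, realized by comparing representatives). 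The main obstacle is the bookkeeping in the first half: passing from the negation of Lemma \ref{antes} — which only guarantees, for \emph{each} $\tau$, \emph{some} $q$ and $\varphi$ — to a \emph{single} $\varphi\in\mathcal{A}_p(\Kset)$ (for the prescribed $p$) that works for the $\mathcal{S}$-condition. I expect this to go through by a diagonal argument over the countably many classes $\mathcal{A}_q(\Kset)$ together with the nesting $\mathcal{A}_q(\Kset)\supseteq\mathcal{A}_{q+1}(\Kset)$, but it is the step requiring the most care.
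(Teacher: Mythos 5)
There is a genuine gap, and it sits in your ``complementary half''. From moderateness you only have an \emph{upper} bound $|\hat x(\varphi_\varepsilon)|\le C\varepsilon^{-p_0}$, and no upper bound can force the \emph{lower} bound $|\hat x(\varphi_\varepsilon)|\ge\dot\alpha_a(\varphi_\varepsilon)=(i(\varphi))^a\varepsilon^a$, no matter how large you take $a$: for instance $\hat x(\varphi_\varepsilon)$ may vanish along a sequence $\varepsilon_k\downarrow 0$, and then $\varphi_{\varepsilon_k}\in N_a(x)=S$ for \emph{every} $a$. Worse, if your claim were correct it would say $\varphi_\varepsilon\in S^c$ for \emph{all} sufficiently small $\varepsilon$, which flatly contradicts the first half of your own argument (which produces arbitrarily small $\varepsilon$ with $\varphi_\varepsilon\in S$); so the two halves cannot ``hold simultaneously'' as you assert. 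The real obstruction on the $S^c$ side is the possibility that $\hat{\mathcal{X}}_{S^c}\in\mathcal{N}_f(\Kset)$, i.e.\ $\mathcal{X}_S=1$, and excluding it is exactly where the hypothesis $x\ne 0$ must enter, via Lemma \ref{vert}$(i)$; your proposal never uses $x\ne 0$, which is the telltale sign of the gap. This is precisely how the paper argues: it assumes $N_a(x)\notin\mathcal{S}_f$ for every $a$ and splits into the two cases $\hat{\mathcal{X}}_{N_a(x)}\in\mathcal{N}_f$ (then by Lemma \ref{antes} and Definition \ref{nax} one gets $|\hat x(\varphi_\varepsilon)|\ge\dot\alpha_a(\varphi_\varepsilon)$ for small $\varepsilon$, so $1/\hat x$ is moderate and $x$ is a unit --- contradiction) and $\hat{\mathcal{X}}_{(N_a(x))^c}\in\mathcal{N}_f$ (then $\mathcal{X}_{N_a(x)}=1$ for all $a$, so $x=0$ by Lemma \ref{vert}$(i)$ --- contradiction).

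Two further points need repair even if you keep your direct strategy. First, the parenthetical ``$N_a(x)\subseteq N_{a'}(x)$ for $a\le a'$, so enlarging $a$ only enlarges $S$'' is backwards on the relevant part of the domain: for the scaled mollifiers one has $i(\varphi_\varepsilon)=\varepsilon\, i(\varphi)<1$ for small $\varepsilon$, so $(i(\varphi_\varepsilon))^{a'}\le(i(\varphi_\varepsilon))^{a}$ and increasing $a$ \emph{shrinks} $N_a(x)$ there; hence non-nullity of $\hat{\mathcal{X}}_{N_a(x)}$ is not preserved by enlarging $a$, and your reconciliation of ``some $a$ works'' with ``$a$ large'' collapses. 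Second, in the first half the diagonal argument over constant functions $\tau\equiv 1/n$ only produces, for each $n$, a possibly different $\varphi^{(n)}$ with one good $\varepsilon_n<1/n$; a pigeonhole over the countably many classes $\mathcal{A}_q(\Kset)$ does not turn these into a single $\varphi$. The correct mechanism is the contrapositive with a pointwise-defined $\tau$: if no single $\varphi\in\mathcal{A}_p(\Kset)$ had $0\in\overline{\{\varepsilon\mid\varphi_\varepsilon\in S\}}$, define $\tau(\varphi)$ to witness this for each $\varphi$ and conclude $\hat{\mathcal{X}}_S\in\mathcal{N}_f(\Kset)$ from Lemma \ref{antes} (or directly from Definition \ref{simplificada}), a contradiction. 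With that fix the first half stands, but the second half still requires the Lemma \ref{vert}$(i)$ argument (and care to get one $\varphi$ witnessing both conditions of $\mathcal{S}$), so as written the proposal does not prove the proposition.
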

\begin{proof} In the prove $x$ will stand for a representative. 
Suppose that $\forall a\in\Nset$ is we have that 
$S:=N_a(x)\notin\mathcal{S}_f$. Then either 
$\mathcal{X}_{N_a(x)}\in\mathcal{N}_f(\Kset)$ or
$\mathcal{X}_{(N_a(x))^c}\in\mathcal{N}_f(\Kset)$. In fact, if
$\mathcal{X}_{N_a(x)}\in\mathcal{N}_f(\Kset)$ then, by Lemma
\ref{antes}, there exists $\tau:\mathcal{A}_0(\Kset)\to ]0,1]$ 
such that $\ \forall \varphi\in\mathcal{A}_q(\Kset)$ we have that
$I_{\tau(\varphi)}\subseteq \{\varepsilon|\varphi_\varepsilon\in
(N_a(x))^c\}$. It follows from Definition \ref{nax} that
$|x(\varphi_\varepsilon)|\ge
\dot{\alpha}_a(\varphi_\varepsilon)=(i(\varphi))^a\varepsilon^a,
~\forall~ 0<\varepsilon<\tau(\varphi)$ and  therefore
$|\frac{1}{x(\varphi_\varepsilon)}|\le (i(\varphi))^{-a}\varepsilon^{-a},
~\forall~ 0<\varepsilon<\tau(\varphi)$. This implies that
$\frac{1}{x}\in\mathcal{E}_f^M(\Kset)$ and, since $x\frac{1}{x}=1$,
follows that $x$ is a unit, a contradiction.  On the other hand if
$\mathcal{X}_{(N_a(x))^c}\in\mathcal{N}_f(\Kset)$ then,  since
$\mathcal{X}_{(N_a(x))^c}=1-\mathcal{X}_{N_a(x)}$, it  follows that
$\mathcal{X}_{N_a(x)}=1, ~\forall~a\in\Nset$ and hence, from Lemma 
\ref{vert} $(i)$, it follows that $x\in\mathcal{N}_f(\Kset)$, a 
contradiction. It follows that there must exist an  $\ a\in\Nset$ 
such that $N_a(x)\in\mathcal{S}_f$. 

The last affirmation follows immediately of the definition of $S$.
\end{proof}

The follow result is fundamental in proving many other results. 
Sometimes we shall refer to it as the Approximation Theorem.
\begin{The}\label{aproxim} {\tmsamp{Let
  $x\in\overline{\Kset}_f, ~(x\ne 0)$ be a non-unit. 
Then  only one of the following conditions holds:
\begin{enumerate}
\item[$(a)$] There exists $S\in\mathcal{S}_f$ and $a\in\Nset$ such that
\begin{enumerate}    
\item[$(i)$] $x\mathcal{X}_S=0$;
\item[$(ii)$] $|x\mathcal{X}_{S^c}|\ge \dot{\alpha}_a\mathcal{X}_{S^c}$ 
(i.e., there exists $\hat{x}$ representative of $x$ such that
    $|\hat{x}(\varphi)|\ge\hat{\dot{\alpha}}_a(\varphi),
    ~\forall~\varphi\in\mathcal{A}_0(\Kset))$.
\end{enumerate}  
\item[$(b)$] There exist sequences $\{a_n\}_{n\in\Nset}\subset\Nset$ and
  $\{S_n\}_{n\in\Nset}\subset \mathcal{S}_f$ such that
\begin{enumerate}
\item[$(i)$] $S_n\supset S_{n+1}, ~a_n<a_{n+1}$ 
and $\underset{n\to\infty}{\lim} a_n=\infty$;
\item[$(ii)$]
  $x\mathcal{X}_{S_n}\underset{n\rightarrow\infty}{\longrightarrow} 0$;
\item[$(iii)$] $|x\mathcal{X}_{S_n}|<\dot{\alpha}_{a_n}$.
\end{enumerate}    
 \end{enumerate}}}
\end{The}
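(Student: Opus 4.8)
The plan is to iterate Proposition \ref{creio}. Given a non-zero non-unit $x$, Proposition \ref{creio} furnishes some $a_1\in\Nset$ with $S_1:=N_{a_1}(x)\in\mathcal{S}_f$ and $|x\mathcal{X}_{S_1}|<\dot{\alpha}_{a_1}$. The key dichotomy is then whether $x\mathcal{X}_{S_1}=0$ or not. If it is zero, I claim we land in case $(a)$: it remains to verify condition $(a)(ii)$, i.e. that on the complement $S_1^c$ the representative is bounded below by $\hat{\dot\alpha}_{a_1}$. This is exactly the failure of $N_{a_1}(x)$ to pick up any more zeros outside $S_1$; more precisely, after modifying $x$ on $Z(\hat x)$ as in the proof of the previous theorem (replacing $\hat x$ by $\hat x\mathcal{X}_{Z(\hat x)^c}$, which changes $x$ by a null function), one checks that $\varphi\notin S_1$ forces $|\hat x(\varphi)|\ge\hat{\dot\alpha}_{a_1}(\varphi)$, so $|x\mathcal{X}_{S_1^c}|\ge\dot\alpha_{a_1}\mathcal{X}_{S_1^c}$ as required.

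If instead $x\mathcal{X}_{S_1}\ne 0$, then $x\mathcal{X}_{S_1}$ is itself a non-zero element, and it is a non-unit (it is killed by $\mathcal{X}_{S_1^c}\ne 0$ by Proposition \ref{caos} and Proposition \ref{cara}, hence is a zero divisor). Apply Proposition \ref{creio} again, now to $x\mathcal{X}_{S_1}$, choosing the new index $a_2$ strictly larger than $a_1$ (possible since $\dot\alpha_{a}\mathcal{X}_{S}\to 0$ as $a\to\infty$ for $S\in\mathcal{S}_f$, so enlarging $a$ only shrinks $N_a$): this gives $a_2>a_1$ and $S_2:=N_{a_2}(x\mathcal{X}_{S_1})\subseteq S_1$ with $S_2\in\mathcal{S}_f$ and $|x\mathcal{X}_{S_2}|\le|x\mathcal{X}_{S_1}\mathcal{X}_{S_2}|<\dot\alpha_{a_2}$. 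Recursively, either at some finite stage $x\mathcal{X}_{S_n}=0$ — in which case, by the same computation as above applied on $S_{n-1}\setminus S_n$, one is again in case $(a)$ — or the process never terminates and produces the decreasing sequence $S_1\supset S_2\supset\cdots$ in $\mathcal{S}_f$ with $a_1<a_2<\cdots$, $\lim a_n=\infty$ (arranged by hand), and $|x\mathcal{X}_{S_n}|<\dot\alpha_{a_n}$, giving $(b)(i)$ and $(b)(iii)$. For $(b)(ii)$ note $\|x\mathcal{X}_{S_n}\|\le\|\dot\alpha_{a_n}\|=e^{-a_n}\to 0$ by Corollary \ref{norma}$(iv)$, so $x\mathcal{X}_{S_n}\to 0$.

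Finally, mutual exclusivity: in case $(a)$ the descent stops, whereas in case $(b)$ we have $x\mathcal{X}_{S_n}\ne 0$ for every $n$, so the two cannot both hold for the same $x$; and one must always occur since at each step the dichotomy ``$x\mathcal{X}_{S_n}=0$ or not'' is exhaustive. The step I expect to be the real obstacle is establishing $(a)(ii)$ — that when the process halts, the chosen representative is genuinely bounded below by $\hat{\dot\alpha}_{a}$ on all of $\mathcal{A}_0(\Kset)$ (after the null modification), rather than merely on $S^c$ up to a null error; this requires combining Lemma \ref{antes} and Lemma \ref{vert} carefully with the definition of $N_a(x)$, exactly as in the non-unit argument of the preceding theorem. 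The bookkeeping to keep the $S_n$ nested and the $a_n$ strictly increasing while staying inside $\mathcal{S}_f$ is routine once Proposition \ref{creio} is in hand.
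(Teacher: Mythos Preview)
Your overall strategy is exactly the paper's: iterate Proposition~\ref{creio}, branching at each stage on whether $x\mathcal{X}_{S_n}$ vanishes. The verification of $(a)(ii)$ when the process halts and the final norm estimate $\|x\mathcal{X}_{S_n}\|\le e^{-a_n}$ for $(b)(ii)$ are both fine and match the paper.

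There is, however, a concrete error in the inductive step. You set $S_2:=N_{a_2}(x\mathcal{X}_{S_1})$ and claim $S_2\subseteq S_1$. This is false: for any $\varphi\in S_1^c$ one has $(x\mathcal{X}_{S_1})(\varphi)=0<\hat{\dot\alpha}_{a_2}(\varphi)$, so $\varphi\in N_{a_2}(x\mathcal{X}_{S_1})$. Thus $S_1^c\subseteq S_2$, and in fact for $\varphi\in S_1^c$ one has $|x\mathcal{X}_{S_2}(\varphi)|=|x(\varphi)|\ge\hat{\dot\alpha}_{a_1}(\varphi)>\hat{\dot\alpha}_{a_2}(\varphi)$, so your claimed bound $|x\mathcal{X}_{S_2}|<\dot\alpha_{a_2}$ fails and the chain of inequalities $|x\mathcal{X}_{S_2}|\le|x\mathcal{X}_{S_1}\mathcal{X}_{S_2}|$ is reversed. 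The paper repairs this by setting $\tilde S_{n+1}:=N_{a_{n+1}}(x\mathcal{X}_{S_n})$ and then \emph{intersecting}: $S_{n+1}:=S_n\cap\tilde S_{n+1}$. Then $x\mathcal{X}_{S_{n+1}}=(x\mathcal{X}_{S_n})\mathcal{X}_{\tilde S_{n+1}}$, so Proposition~\ref{creio} gives $(b)(iii)$ directly, and the nesting is built in.

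A related point: you treat $a_{n+1}>a_n$ as something to be ``arranged by hand'', but Proposition~\ref{creio} gives no control over which $a$ works. The paper instead shows the inequality is \emph{automatic}: since $\tilde S_{n+1}^c\subseteq S_n$ (as above), for $\varphi\in\tilde S_{n+1}^c$ with $i(\varphi)<1$ one has $\hat{\dot\alpha}_{a_{n+1}}(\varphi)\le|x(\varphi)|<\hat{\dot\alpha}_{a_n}(\varphi)$, forcing $a_n<a_{n+1}$. With these two corrections your argument becomes the paper's.
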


\begin{proof} Suppose that $x$ does not satisfy $(a)$. We will show
  that condition $(b)$ holds. As $x\in\overline{\Kset}_f\setminus \{0\}$ and 
$x\notin\Inv(\overline{\Kset}_f)$ it follows, from Proposition \ref{creio}, 
that exists $a_1\in\Nset$ such that if $S_1:=N_{a_1}(x)$, then
$S_1\in\mathcal{S}_f$ and $|x\mathcal{X}_{S_1}|<\dot{\alpha}_{a_1}$.  
Let $x_2=x\mathcal{X}_{S_1}$. If $x_2=0$,
then $x\mathcal{X}_{S_1}=0$ and so $(a)$ $(i)$ holds. From Definition 
\ref{nax} we also have that
$|x\mathcal{X}_{S_1^c}|\ge \dot{\alpha}_{a_1}\mathcal{X}_{S_1^c}$,
i.e.,  $(a)$ $(ii)$ holds and hence $x$ it satisfies $(a)$, contrary
to our assumption. So we have that $x_2\neq 0$.

$x_2\mathcal{X}_{S_1^c}=0$ with $S_1\in\mathcal{S}_f$ and hence $x_2$
it is a non-trivial zero divisor and so is a non-unit. This allows us 
to proceed by induction. For completeness we show how to accomplish
the inductive step: There exists $a_{n+1}\in\Nset$ 
such that if $\tilde{S}_{n+1}:=N_{a_{n+1}}(x_{n+1})$, then
$\tilde{S}_{n+1}\in\mathcal{S}_f$ and
$|x_{n+1}\mathcal{X}_{\tilde{S}_{n+1}}|<\dot{\alpha}_{a_{n+1}}\Rightarrow
|x\mathcal{X}_{S_n\cap \tilde{S}_{n+1}}|<\dot{\alpha}_{a_{n+1}}$. Let
$S_{n+1}:=S_n\cap \tilde{S}_{n+1}$. Then $S_{n+1}\subset
S_n\in\mathcal{S}_f$ and so $ S_{n+1}\in \mathcal{S}_f$ and
$|x\mathcal{X}_{S_{n+1}}|<\dot{\alpha}_{a_{n+1}}$. Definition
\ref{nax} tells us that 
\begin{equation}\label{eq:compara}
\dot{\alpha}_{a_{n+1}}\le |x_{n+1}\mathcal{X}_{\tilde{S}_{n+1}^c}|=
|x\mathcal{X}_{S_n\cap\tilde{S}_{n+1}^c}|=|x\mathcal{X}_{\tilde{S}_{n+1}^c}|,    
\end{equation}
therefore $\tilde{S}_{n+1}^c\subset S_n$. In fact, if
$\varphi\in\tilde{S}_{n+1}^c$, then $\dot{\alpha}_{a_{n+1}}(\varphi)\le
|x_{n+1}(\varphi)|=|x(\varphi)\mathcal{X}_{S_n}(\varphi)|$ and since 
$\dot{\alpha}_{a_{n+1}}=(i(\varphi))^{a_{n+1}}>0$ we have that
$|x(\varphi)\mathcal{X}_{S_n}(\varphi|>0\Rightarrow
\mathcal{X}_{S_n}(\varphi)\ne 0$, i.e.,
$\mathcal{X}_{S_n}(\varphi)=1\Rightarrow \varphi\in S_n$ hence 
$\tilde{S}_{n+1}^c\subset S_n$ and $S_n\cap
\tilde{S}_{n+1}^c=\tilde{S}_{n+1}^c$. Now 
$\ \forall \varphi\in\tilde{S}_{n+1}^c\subset S_n$ we have, from 
\ref{eq:compara}, that
$$\dot{\alpha}_{a_{n+1}}(\varphi)\le
|x_{n+1}(\varphi)|=|x(\varphi)|<\dot{\alpha}_{a_n}(\varphi).$$ From there, 
we have that $\dot{\alpha}_{a_{n+1}}(\varphi)<\dot{\alpha}_{a_n}(\varphi),
~\forall~ \varphi\in\tilde{S}_{n+1}^c$ which implies that
$$(i(\varphi))^{a_{n+1}}<(i(\varphi))^{a_n}, ~\forall~
\varphi\in\tilde{S}_{n+1}^c.$$ Hence $a_n<a_{n+1}$ for some
$\varphi\in\tilde{S}_{n+1}^c$ such that $0<i(\varphi)<1$. Such
$\varphi\in\tilde{S}_{n+1}^c$ it always exists, therefore
$\tilde{S}_{n+1}^c\in\mathcal{S}_f$ \footnote{In fact, it takes
$\varphi\in\tilde{S}_{n+1}^c$, then $i(\varphi_\varepsilon)=\varepsilon
i(\varphi)$ and
$\{\varepsilon|\varphi_\varepsilon\in\tilde{S}_{n+1}^c\}\in\mathcal{S}$,
therefore exists $\varepsilon_0$ such that 
$\varphi_{\varepsilon_0}\in\tilde{S}_{n+1}^c$ and
$i(\varphi_{\varepsilon_0})<1$, i.e., $\varepsilon_0 i(\varphi)<1$,
pois $i(\varphi_\varepsilon)=\varepsilon i(\varphi)\to 0$ when
$\varepsilon\to 0$.}. 

In this way we construct sequences $\{S_n\}_{n\in\Nset}$ and 
$\{a_n\}_{n\in\Nset}$ satisfying conditions $(i)$ and $(iii)$. 
We now show that condition $(ii)$ also holds: from $(iii)$ it
follows that for each $n\in\Nset$, 
$|(x\mathcal{X}_{S_n})(\varphi)|<\dot{\alpha}_{a_n}(\varphi),
~\forall~\varphi\in\mathcal{A}_0(\Kset)$ and hence $ \|x\mathcal{X}_{S_n}\|\le
\|\dot{\alpha}_{a_n}\|=e^{-a_n}\underset{n\rightarrow\infty}{\longrightarrow}
0$.
\end{proof}

\begin{The}\label{mosca} {\tmsamp{$x\in\Inv(\overline{\Kset}_f)$ if
      and only if there are
  $r>0, ~\tau:\mathcal{A}_0(\Kset)\to ]0,1]$ such that
  $|\hat{x}(\varphi_\varepsilon)|\ge\dot{\alpha}_r(\varphi_\varepsilon), 
~\forall~0<\varepsilon<\tau(\varphi),$ where $\hat{x}$ is 
a representative of $x$.}}
\end{The}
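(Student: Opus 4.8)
The plan is to prove both implications by translating invertibility into a statement about representatives, using the moderateness condition $(M)$ together with the Approximation Theorem (Theorem~\ref{aproxim}).

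\medskip

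\emph{Sufficiency.} Suppose there exist $r>0$ and $\tau\colon\mathcal{A}_0(\Kset)\to]0,1]$ with $|\hat x(\varphi_\varepsilon)|\ge\hat{\dot\alpha}_r(\varphi_\varepsilon)=\varepsilon^r(i(\varphi))^r$ for all $0<\varepsilon<\tau(\varphi)$. First I would modify the representative on the ``bad'' set: define $\tilde x(\varphi):=\hat x(\varphi)$ if $\hat x(\varphi)\ne 0$ and $\tilde x(\varphi):=1$ otherwise, so that $\tilde x$ is everywhere nonzero and $\tilde x-\hat x$ vanishes on $\{\varphi\mid\hat x(\varphi)\ne 0\}\supseteq\bigl\{\varphi_\varepsilon\mid 0<\varepsilon<\tau(\varphi)\bigr\}$; by Lemma~\ref{antes} this difference lies in $\mathcal{N}_f(\Kset)$, so $\tilde x$ still represents $x$. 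Now set $w(\varphi):=1/\tilde x(\varphi)$; this is a well-defined element of $\mathcal{E}_f(\Kset)$, and the lower bound gives $|w(\varphi_\varepsilon)|\le\varepsilon^{-r}(i(\varphi))^{-r}$ for $0<\varepsilon<\tau(\varphi)$. Choosing $p\in\Nset$ with $p\ge r$, for every $\varphi$ one takes $C_\varphi:=(i(\varphi))^{-r}$ and $\eta_\varphi:=\tau(\varphi)$ to get $|w(\varphi_\varepsilon)|\le C_\varphi\varepsilon^{-p}$ on $0<\varepsilon<\eta_\varphi$; hence $w\in\mathcal{E}_f^M(\Kset)$ and its class $y\in\overline{\Kset}_f$ satisfies $xy=1$ (since $\tilde x w\equiv 1$), so $x\in\Inv(\overline{\Kset}_f)$.

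\medskip

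\emph{Necessity.} This is the direction where the Approximation Theorem does the work, and it is the main obstacle. Suppose no such pair $(r,\tau)$ exists; I must show $x\notin\Inv(\overline{\Kset}_f)$. If $x=0$ there is nothing to prove, so assume $x\ne 0$ and, for contradiction, that $x$ is a unit. The failure of the lower bound for every $r\in\Nset$ means precisely: for each $a\in\Nset$ and each candidate $\tau$, there is $\varphi\in\mathcal{A}_0(\Kset)$ and arbitrarily small $\varepsilon$ with $|\hat x(\varphi_\varepsilon)|<\hat{\dot\alpha}_a(\varphi_\varepsilon)$, i.e. $\varphi_\varepsilon\in N_a(\hat x)$ for $\varepsilon$ arbitrarily close to $0$. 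Combined with the fact that a unit cannot be a zero divisor, one shows $N_a(\hat x)\notin\mathcal{N}_f(\Kset)$-trivially and $N_a(\hat x)$ is not ``eventually empty along rays'', so by the analysis in Proposition~\ref{creio} (whose hypothesis is exactly that $x$ is a non-unit, which we are trying to contradict) — more carefully, by a direct application of Lemma~\ref{antes} in the contrapositive — one concludes $\hat x$ cannot have the form required for $\tfrac1{\hat x}$ to be moderate. Concretely: $x\in\Inv(\overline{\Kset}_f)$ forces $1/\hat x\in\mathcal{E}_f^M(\Kset)$ after a null modification, and Definition~\ref{simplificada}$(M)$ for $1/\hat x$ says there is $p$ with, for each $\varphi\in\mathcal{A}_p(\Kset)$, constants $C_\varphi,\eta_\varphi$ and $|1/\hat x(\varphi_\varepsilon)|\le C_\varphi\varepsilon^{-p}$ on $0<\varepsilon<\eta_\varphi$. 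Setting $r:=p+1$ and reading this as $|\hat x(\varphi_\varepsilon)|\ge C_\varphi^{-1}\varepsilon^{p}\ge\varepsilon^{r}(i(\varphi))^{r}$ after absorbing $(i(\varphi))^{r}$ and shrinking $\eta_\varphi$, one recovers exactly a pair $(r,\tau)$ of the forbidden kind — contradiction. The one subtlety to handle with care is that $(M)$ only controls $\varphi\in\mathcal{A}_p(\Kset)$, whereas the conclusion is stated for all $\varphi\in\mathcal{A}_0(\Kset)$; I would deal with this exactly as in Theorem~\ref{aproxim}$(a)(ii)$, replacing $\hat x$ by the representative produced there which satisfies $|\hat x(\varphi)|\ge\hat{\dot\alpha}_a(\varphi)$ on all of $\mathcal{A}_0(\Kset)$, so that a single $\tau\equiv\tau(\varphi)$ built from the moderateness data of $1/\hat x$ together with the Approximation Theorem's representative gives the global bound.

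\medskip

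In short, the forward implication is a routine ``$1/\hat x$ is moderate $\Leftrightarrow$ $\hat x$ is bounded below polynomially'' computation, and the content is in the reverse implication, where one invokes Theorem~\ref{aproxim}: a non-unit $x\ne0$ falls into case $(a)$ or $(b)$, and in either case $x\mathcal{X}_S$ is a nonzero zero-divisor multiple arbitrarily close to $0$, which is incompatible with $\hat x$ being bounded below by any $\hat{\dot\alpha}_r$ on a full ray $I_{\tau(\varphi)}$. I expect the bookkeeping of passing between ``for some representative'' and ``for all $\varphi$'' to be the only place requiring attention.
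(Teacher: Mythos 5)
Your sufficiency direction is correct and is essentially the paper's argument: from the lower bound you get $|1/\hat{x}(\varphi_\varepsilon)|\le (i(\varphi))^{-r}\varepsilon^{-r}$ on $0<\varepsilon<\tau(\varphi)$ and read off condition $(M)$; your preliminary modification of $\hat{x}$ on its zero set (null by Lemma \ref{antes}) is a point the paper glosses over, and it is a welcome addition.

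The necessity direction, however, has a genuine gap. Your pivotal step is the sentence ``$x\in\Inv(\overline{\Kset}_f)$ forces $1/\hat{x}\in\mathcal{E}_f^M(\Kset)$ after a null modification,'' which you assert but never derive; modulo absorbing constants, this assertion \emph{is} the implication to be proved, so as written the argument is circular. The lemmas you lean on to patch it do not apply: Proposition \ref{creio} and Theorem \ref{aproxim} both have as hypothesis that $x$ is a \emph{non-unit}, exactly the situation you are excluding, so in particular your final device of ``replacing $\hat{x}$ by the representative produced in Theorem \ref{aproxim}$(a)(ii)$'' is unavailable for a unit $x$ (and even for non-units it is only available in alternative $(a)$). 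The surrounding contradiction framing and the remarks about $N_a(\hat{x})$ do not supply the missing estimate. Two honest ways to close the gap: (1) the paper's route — by Lemma \ref{vert}$(ii)$ invertibility gives $\mathcal{X}_{N_q(\hat{x})}=0$ for every $q$, i.e.\ $\hat{\mathcal{X}}_{N_q(\hat{x})}\in\mathcal{N}_f(\Kset)$, and Lemma \ref{antes} then yields $\tau$ with $I_{\tau(\varphi)}\subseteq\{\varepsilon\,|\,\varphi_\varepsilon\in (N_q(\hat{x}))^c\}$, which is precisely $|\hat{x}(\varphi_\varepsilon)|\ge\hat{\dot{\alpha}}_q(\varphi_\varepsilon)$ for $0<\varepsilon<\tau(\varphi)$; or (2) a direct computation from $xy=1$: writing $\hat{x}\hat{y}=1+n$ with $n\in\mathcal{N}_f(\Kset)$, choose $q$ with $\gamma(q)-p>0$ so that $|n(\varphi_\varepsilon)|\le\tfrac12$ for $\varphi\in\mathcal{A}_q(\Kset)$ and small $\varepsilon$, whence $|\hat{x}(\varphi_\varepsilon)|\ge \tfrac{1}{2|\hat{y}(\varphi_\varepsilon)|}\ge \tfrac{\varepsilon^{p'}}{2C_\varphi}$ by moderateness of $\hat{y}$, and then absorb the constants by taking $r:=p'+1$ and shrinking $\tau(\varphi)$. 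Note also that the quantifier issue you worry about ($\varphi\in\mathcal{A}_p$ versus $\varphi\in\mathcal{A}_0$) is present in the paper's own proof as well and needs no appeal to the Approximation Theorem; the bound along the scales $\varphi_\varepsilon$ for $\varphi$ in a fixed $\mathcal{A}_q(\Kset)$ is all that is used anywhere.
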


\begin{proof} Suppose that $x\in\Inv(\overline{\Kset})$. Then, 
from Lemma \ref{vert},  we have that 
$\mathcal{X}_{N_q(\hat{x})}=0, ~\forall~ q\in\Nset$, i.e.,
$\hat{\mathcal{X}}_{N_q(\hat{x})}\in\mathcal{N}_f$. Lemma 
\ref{antes} implies  that there 
exists $\tau:\mathcal{A}_0(\Kset)\to ]0,1]$ such that
$\ \forall \varphi\in\mathcal{A}_q(\Kset)$ we have
$I_{\tau(\varphi)}\subset\{\varepsilon|\varphi_\varepsilon\in
(N_q(\hat{x}))^c\}$. Thus, from Definition \ref{nax}, it follows that 
$|\hat{x}(\varphi_\varepsilon)|\ge \dot{\alpha}_q(\varphi_\varepsilon),
~\forall~0<\varepsilon<\tau(\varphi),$ where $\hat{x}$ is a
representative of $x$. Hence we may take $q=r$.

Conversely, if there exists $r>0, ~\tau:\mathcal{A}_0(\Kset)\to
]0,1]$ such that
$$|\hat{x}(\varphi_\varepsilon)|\ge\dot{\alpha}_r(\varphi_\varepsilon),
~\forall~ 0<\varepsilon<\tau(\varphi),$$ then
$$|\hat{x}(\varphi_\varepsilon)|\ge (i(\varphi))^r\varepsilon^r,
~\forall~ 0<\varepsilon<\tau(\varphi)$$ and hence
$$\left|\frac{1}{\hat{x}(\varphi_\varepsilon)}\right|\le
  (i(\varphi))^{-r}\varepsilon^{-r}, ~\forall~ 0<\varepsilon<\tau(\varphi).$$ 
  
We assert that $1/\hat{x}\in\mathcal{E}_f^M(\Kset)$:  taking 
$p=r>0, ~C=C(\varphi)=(i(\varphi))^{-p}>0$ and
$\eta=\eta(\varphi)=\tau(\varphi)>0$ to get 
$$\left|\frac{1}{\hat{x}(\varphi_\varepsilon)}\right|
\le(i(\varphi))^{-r}\varepsilon^{-r}=C\varepsilon^{-p},
~\forall~ 0<\varepsilon<\tau(\varphi).$$ Therefore  
$x\in\Inv(\overline{\Kset}_f)$.
\end{proof}

\begin{Lem}\label{rep} {\tmsamp{Let $x\in\overline{\Kset}_f, ~(x\ne 0)$ be 
a non-unit. Then there exists $a\in\Nset$ such that
  $y:=x(1-\mathcal{X}_{N_a(\hat{x})})+\mathcal{X}_{N_a(\hat{x})}
\in\Inv(\overline{\Kset}_f)$.}}
\end{Lem}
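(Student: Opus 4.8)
The plan is to fix a representative $\hat{x}$ of $x$, take any $a\in\Nset^{*}$, set $N:=N_a(\hat{x})\subseteq\mathcal{A}_0(\Kset)$, and exhibit a representative $\hat{y}:=\hat{x}\,(1-\hat{\mathcal{X}}_N)+\hat{\mathcal{X}}_N$ of $y$ to which the invertibility criterion of Theorem~\ref{mosca} applies. First, $\hat{y}\in\mathcal{E}_f^M(\Kset)$ since it is obtained from the moderate functions $\hat{x}$ and $\hat{\mathcal{X}}_N$ by ring operations, so $y\in\overline{\Kset}_f$ is well defined; in fact the argument below shows that \emph{any} $a\in\Nset^{*}$ works, so no careful choice of $a$ is required.

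The crucial observation is the pointwise description of $\hat{y}$: for $\varphi\in\mathcal{A}_0(\Kset)$ one has $\hat{y}(\varphi)=1$ if $\varphi\in N$ and $\hat{y}(\varphi)=\hat{x}(\varphi)$ if $\varphi\notin N$. I would then verify the hypothesis of Theorem~\ref{mosca} with $r:=a$. Fix $\varphi\in\mathcal{A}_0(\Kset)$; by (\ref{artigo-1}) one has $\dot{\alpha}_a(\varphi_\varepsilon)=\varepsilon^{a}(i(\varphi))^{a}\to 0$ as $\varepsilon\downarrow 0$, so one may pick $\tau(\varphi)\in\,]0,1]$ with $\dot{\alpha}_a(\varphi_\varepsilon)<1$ for all $0<\varepsilon<\tau(\varphi)$. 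For such $\varepsilon$, since $\varphi_\varepsilon\in\mathcal{A}_0(\Kset)$ as well, exactly one of two cases occurs: if $\varphi_\varepsilon\in N$ then $|\hat{y}(\varphi_\varepsilon)|=1>\dot{\alpha}_a(\varphi_\varepsilon)$; if $\varphi_\varepsilon\notin N=N_a(\hat{x})$ then, by Definition~\ref{nax}, $|\hat{y}(\varphi_\varepsilon)|=|\hat{x}(\varphi_\varepsilon)|\ge\dot{\alpha}_a(\varphi_\varepsilon)$. In either case $|\hat{y}(\varphi_\varepsilon)|\ge\dot{\alpha}_a(\varphi_\varepsilon)$ for all $0<\varepsilon<\tau(\varphi)$, whence Theorem~\ref{mosca} yields $y\in\Inv(\overline{\Kset}_f)$.

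I do not anticipate a substantive obstacle; the only step calling for attention is the case $\varphi_\varepsilon\in N_a(\hat{x})$, where $|\hat{y}(\varphi_\varepsilon)|=1$ but $\dot{\alpha}_a(\varphi_\varepsilon)$ need not be $\le 1$ for every $\varepsilon$. This is precisely why $\tau$ is permitted to depend on $\varphi$ in Theorem~\ref{mosca}: one shrinks $\tau(\varphi)$ using $\dot{\alpha}_a(\varphi_\varepsilon)=\varepsilon^{a}(i(\varphi))^{a}\to 0$ from (\ref{artigo-1}). One should also make explicit that the two cases above are exhaustive, which rests on the standard fact that $\varphi_\varepsilon\in\mathcal{A}_0(\Kset)$ whenever $\varphi\in\mathcal{A}_0(\Kset)$.
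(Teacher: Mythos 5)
Your proof is correct, and it differs from the paper's in a small but genuine way. The construction of the representative $\hat{y}=\hat{x}(1-\hat{\mathcal{X}}_{N_a(\hat{x})})+\hat{\mathcal{X}}_{N_a(\hat{x})}$ is of course the same, but the paper first invokes Proposition~\ref{creio} to choose a special $a$ with $N_a(\hat{x})\in\mathcal{S}_f$ (this is where the hypotheses $x\ne 0$, $x\notin\Inv(\overline{\Kset}_f)$ enter) and then merely asserts that the class of $\hat{y}$ ``easily'' defines a unit; you instead observe that \emph{every} $a\in\Nset^*$ works and make the invertibility explicit through the criterion of Theorem~\ref{mosca}, shrinking $\tau(\varphi)$ so that $\dot{\alpha}_a(\varphi_\varepsilon)=\varepsilon^a(i(\varphi))^a<1$ and splitting into the cases $\varphi_\varepsilon\in N_a(\hat{x})$ (where $|\hat{y}(\varphi_\varepsilon)|=1$) and $\varphi_\varepsilon\notin N_a(\hat{x})$ (where $|\hat{y}(\varphi_\varepsilon)|=|\hat{x}(\varphi_\varepsilon)|\ge\dot{\alpha}_a(\varphi_\varepsilon)$ by Definition~\ref{nax}). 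This is a clean, self-contained verification of the statement as written, and it even shows the hypotheses on $x$ are superfluous for that statement. What the paper's choice of $a$ buys, and what your argument does not supply, is the extra property $N_a(\hat{x})\in\mathcal{S}_f$, i.e.\ $\mathcal{X}_{N_a(\hat{x})}\notin\{0,1\}$, which Theorem~\ref{rad} uses immediately afterwards to place $\mathcal{X}_{N_a(\hat{x})}$ in a maximal ideal; so if your proof replaced the paper's, one would still need to cite Proposition~\ref{creio} at that later point to choose $a$ with $N_a(\hat{x})\in\mathcal{S}_f$ (which your argument happily accommodates, since it works for that $a$ too).
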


\begin{proof} By Proposition \ref{creio}, there exists $a\in\Nset$
such that $N_a(\hat{x})\in\mathcal{S}_f$ hence
  $\mathcal{X}_{N_a(\hat{x})}\notin \{0, 1\}$. Define 
$\hat{y}:=\hat{x}(1-\hat{\mathcal{X}}_{N_a(\hat{x})})+
\hat{\mathcal{X}}_{N_a(\hat{x})}.$ Then we see easily that the class 
of $\hat{y}$ defines a unit.
\end{proof}

\begin{The}\label{rad} {\tmsamp{Let $x\in\overline{\Kset}_f, ~(x\ne 0)$ 
be non-unit. Then there exists a maximal ideal $\mathfrak{m}$ of
$\overline{\Kset}_f$ such that $x\notin\mathfrak{m}$. Hence the
Jacobson radical $\Rad(\overline{\Kset}_f)=\{0\}$.}}
\end{The}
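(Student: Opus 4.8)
The plan is to use the Approximation Theorem (Theorem \ref{aproxim}) together with Zorn's Lemma in the same spirit as Aragona--Juriaans. Fix a non-unit $x\neq 0$. First I would apply Theorem \ref{aproxim}: either case $(a)$ holds, giving $S\in\mathcal{S}_f$ with $x\mathcal{X}_S=0$ and a representative $\hat{x}$ with $|\hat{x}(\varphi)|\ge\hat{\dot\alpha}_a(\varphi)$ on $S^c$, or case $(b)$ holds, giving nested $S_n\in\mathcal{S}_f$ with $x\mathcal{X}_{S_n}\to 0$. In either situation the key element to look at is $y:=x(1-\mathcal{X}_{N_a(\hat{x})})+\mathcal{X}_{N_a(\hat{x})}$ supplied by Lemma \ref{rep}, which is a unit, and which differs from $x$ only on the ``small'' set $N_a(\hat{x})$. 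The idea is that $x$ and $y$ agree modulo the idempotent $\mathcal{X}_{N_a(\hat{x})^c}$, so any maximal ideal not containing $\mathcal{X}_{N_a(\hat{x})}$ will fail to contain $x$.

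Concretely, set $S=N_a(\hat{x})$ and $e=\mathcal{X}_S$, so $e^2=e$, $e\ne 0$, $e\ne 1$, and $xe=x\mathcal{X}_S$ is (in case $(a)$) zero or (using Proposition \ref{creio}) has norm $<e^{-a}$; in fact the clean statement I want is $x(1-e)=x-xe$, hence $x=y-e$ modulo the relation $xe+(1-e)=y$... more precisely $y=x(1-e)+e$, so $y(1-e)=x(1-e)=x-xe$. Now consider the ideal generated by $1-e$: since $e\ne 1$, $1-e\ne 0$, and since $e\ne 0$, $1-e$ is not a unit (as $e(1-e)=0$ and $\overline{\Kset}_f$ has no nonzero nilpotents, $1-e$ is a zero divisor, hence a non-unit by the preceding theorem). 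Therefore $1-e$ lies in some maximal ideal $\mathfrak{m}$. I claim $x\notin\mathfrak{m}$: if $x\in\mathfrak{m}$, then $x(1-e)\in\mathfrak{m}$ and $xe=x-x(1-e)\in\mathfrak{m}$; but from Lemma \ref{rep}, $y=x(1-e)+e\in\Inv(\overline{\Kset}_f)$, and $e=y-x(1-e)\in\mathfrak{m}+\mathfrak{m}=\mathfrak{m}$, together with $1-e\in\mathfrak{m}$, forces $1=e+(1-e)\in\mathfrak{m}$, a contradiction. Hence $x\notin\mathfrak{m}$.

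For the final assertion, recall that the Jacobson radical $\Rad(\overline{\Kset}_f)$ is by definition the intersection of all maximal ideals. If $x\ne 0$ lay in every maximal ideal, then in particular $x$ would be a non-unit (it lies in a proper ideal, so $1\ne x$ and by Theorem \ref{ole-1} $x\notin\Inv(\overline{\Kset}_f)$), and the first part produces a maximal ideal $\mathfrak{m}$ with $x\notin\mathfrak{m}$, a contradiction. Therefore $\Rad(\overline{\Kset}_f)=\{0\}$.

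The main obstacle I anticipate is making the case split clean: one must be sure that the element $1-\mathcal{X}_{N_a(\hat{x})}$ from Lemma \ref{rep} really annihilates the ``defective part'' of $x$ (i.e.\ that $y(1-e)=x(1-e)$ and that $y$ is genuinely a unit, which Lemma \ref{rep} asserts). Everything else is bookkeeping with idempotents: once we know $e$ is a nontrivial idempotent with $xe$ negligible-after-restriction and $y=x(1-e)+e$ a unit, the existence of a maximal ideal avoiding $x$ is forced, and the triviality of the Jacobson radical is immediate.
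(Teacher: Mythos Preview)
Your argument has a genuine gap at the crucial step. You choose a maximal ideal $\mathfrak{m}$ containing $1-e$ (where $e=\mathcal{X}_{N_a(\hat x)}$), then assume $x\in\mathfrak{m}$ and try to derive a contradiction via
\[
e=y-x(1-e)\in\mathfrak{m}+\mathfrak{m}=\mathfrak{m}.
\]
But this line is false: $x(1-e)\in\mathfrak{m}$, yes, yet $y$ is a \emph{unit}, so $y\notin\mathfrak{m}$, and there is no reason for the difference $e=y-x(1-e)$ to lie in $\mathfrak{m}$. In fact no contradiction arises from your setup: take $R=k\times k$ for a field $k$, $e=(1,0)$, $x=(0,b)$ with $b\neq 0$. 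Then $y=x(1-e)+e=(1,b)$ is a unit, while both $x$ and $1-e=(0,1)$ sit happily together in the maximal ideal $\{0\}\times k$.

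The fix is simply to put $e$, not $1-e$, into a maximal ideal, and this is exactly what the paper does. Since $e=\mathcal{X}_{N_a(\hat x)}\notin\{0,1\}$ it is a nontrivial idempotent, hence a non-unit, so $e\in\mathfrak{m}$ for some maximal $\mathfrak{m}$. If also $x\in\mathfrak{m}$, then $x(1-e)\in\mathfrak{m}$ and therefore $y=x(1-e)+e\in\mathfrak{m}$, contradicting $y\in\Inv(\overline{\Kset}_f)$. Note also that the detour through the Approximation Theorem and the case split $(a)$/$(b)$ is unnecessary here: Lemma~\ref{rep} already hands you the $a\in\Nset$ and the unit $y$ directly, and that is all the paper uses.
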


\begin{proof} By Lemma \ref{rep}, there exists $a\in\Nset$ such that
$y=x(1-\mathcal{X}_{N_a(\hat{x})})+\mathcal{X}_{N_a(\hat{x})}\in
\Inv(\overline{\Kset}_f).$ As $\mathcal{X}_{N_a(\hat{x})}\notin
\{0,1\}$, 
$\mathcal{X}_{N_a(\hat{x})}\notin \Inv(\overline{\Kset}_f)$ and 
$\overline{\Kset}_f$ is a ring with unit, there exists a maximal ideal 
$\mathfrak{m}\triangleleft \overline{\Kset}_f$ such that
$\mathcal{X}_{N_a(\hat{x})}\in \mathfrak{m}$. We shall prove that 
$x\notin \mathfrak{m}$: if $x\in \mathfrak{m}$, then
$x\mathcal{X}_{(N_a(\hat{x}))^c}=x(1-\mathcal{X}_{N_a(\hat{x})})\in\mathfrak{m}$
hence
$y=x(1-\mathcal{X}_{N_a(\hat{x})})+
\mathcal{X}_{N_a(\hat{x})}\in\mathfrak{m}\bigcap
\Inv(\overline{\Kset}_f)$, 
a contradiction. 
\end{proof}

We can now state the main result of this section which completely
describes the maximal ideals of $\overline{\Kset}_f$ and shows that
the unit group is dense and open. 

\begin{The}\label{impor}{\tmsamp{
\begin{enumerate}
\item[$(1)$] Let $\mathfrak{m}\subset \overline{\Kset}_f$ be an ideal. 
Then $\mathfrak{m}$ is maximal if and only if 
$\mathfrak{m}=\overline{g_f(\mathcal{F}_\mathfrak{m})}$.
\item[$(2)$] $\Inv(\overline{\Kset}_f)$ is an open and 
dense subset of $\overline{\Kset}_f$.
\end{enumerate}}}
\end{The}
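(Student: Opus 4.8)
The plan is to prove the two parts in sequence, using the Approximation Theorem (Theorem \ref{aproxim}) and the construction from Lemma \ref{rep} as the main engine, together with Theorem \ref{prime} which attaches to each prime ideal $\mathfrak{p}$ a unique $\mathcal{F}_\mathfrak{p}\in\mathit{P}_*(\mathcal{S}_f)$ with $g_f(\mathcal{F}_\mathfrak{p})\subset\mathfrak{p}$.

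For part $(1)$, I first prove the ``if'' direction: given $\mathcal{F}\in\mathit{P}_*(\mathcal{S}_f)$, I want to show $\overline{g_f(\mathcal{F})}$ is maximal. By Lemma \ref{idpro} the ideal $g_f(\mathcal{F})$ is proper, and by Theorem \ref{ole-1} its closure is still proper (it suffices that $1\notin\overline{g_f(\mathcal{F})}$, which follows because $D(1,\mathfrak{I})=1$ for every proper ideal $\mathfrak{I}$, applied to $\mathfrak{I}=g_f(\mathcal{F})$, and closure cannot decrease that distance below... actually one argues directly: any element within distance $<1$ of $1$ is a unit by Proposition \ref{inv}, hence not in any proper ideal, hence not a limit of elements of $g_f(\mathcal{F})$ since those elements are all at distance $\ge 1$ from $1$; so $1\notin\overline{g_f(\mathcal{F})}$). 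Now suppose $\mathfrak{m}\subsetneq\mathfrak{a}\subseteq\overline{\Kset}_f$ with $\mathfrak{m}=\overline{g_f(\mathcal{F})}$ and $\mathfrak{a}$ an ideal; pick $x\in\mathfrak{a}\setminus\mathfrak{m}$. If $x$ is a unit we are done, so assume $x$ is a non-unit, $x\ne 0$. Apply Theorem \ref{aproxim} to $x$. In case $(a)$ there is $S\in\mathcal{S}_f$ with $x\mathcal{X}_S=0$ and $|x\mathcal{X}_{S^c}|\ge\dot\alpha_a\mathcal{X}_{S^c}$; by Theorem \ref{mosca} the element $x\mathcal{X}_{S^c}+\mathcal{X}_S$ is a unit (its representative is bounded below by $\dot\alpha_a$), call it $u$. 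Then $xu^{-1}\mathcal{X}_{S^c}=x\mathcal{X}_{S^c}=x$ shows — wait, more carefully: $x = x\mathcal{X}_{S^c}$ since $x\mathcal{X}_S=0$, and $x\mathcal{X}_{S^c}= (x\mathcal{X}_{S^c}+\mathcal{X}_S)\mathcal{X}_{S^c}=u\mathcal{X}_{S^c}$, so $\mathcal{X}_{S^c}=u^{-1}x\in\mathfrak{a}$. But exactly one of $S,S^c$ lies in $\mathcal{F}$; if $S\in\mathcal{F}$ then $\mathcal{X}_S\in\mathfrak{m}\subseteq\mathfrak{a}$, so $1=\mathcal{X}_S+\mathcal{X}_{S^c}-\mathcal{X}_S\mathcal{X}_{S^c}=\mathcal{X}_S+\mathcal{X}_{S^c}\in\mathfrak{a}$ (using $\mathcal{X}_S\mathcal{X}_{S^c}=0$), contradicting properness of $\mathfrak{a}$... unless $\mathfrak{a}=\overline{\Kset}_f$, which is what we want. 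If instead $S^c\in\mathcal{F}$ then $\mathcal{X}_{S^c}\in\mathfrak{m}$ already, so one must instead exploit the $S$-part: here $x\mathcal{X}_S=0$ means $x$ already ``lives on $S^c$'', and I then pass to $y:=x(1-\mathcal{X}_{N_a(\hat x)})+\mathcal{X}_{N_a(\hat x)}$ as in Lemma \ref{rep}, which is a unit; noting $N_a(\hat x)\in\mathcal{S}_f$ so exactly one of it or its complement is in $\mathcal{F}$, and whichever it is forces $\mathcal{X}_{N_a(\hat x)}\in\mathfrak{m}$ or produces a unit in $\mathfrak{a}$ — this is precisely the argument in the proof of Theorem \ref{rad}. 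In case $(b)$ of Theorem \ref{aproxim} one uses the limit $x\mathcal{X}_{S_n}\to 0$ together with the fact that each $x(1-\mathcal{X}_{S_n})$ differs from a unit by a characteristic function $\mathcal{X}_{S_n}$ that is either in $\mathfrak{m}$ or whose complement is, to conclude $x\in\overline{\,\mathfrak{m}+(\text{units}\cdot\mathfrak a)\,}$ and ultimately $1\in\mathfrak{a}$. The cleanest packaging is: show that for a non-unit $x\notin\overline{g_f(\mathcal{F}_{\mathfrak m})}$ there is a unit in the ideal it generates together with $\mathfrak m$; equivalently, show directly that $\overline{g_f(\mathcal F)}$ together with any $x$ outside it generates everything. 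Conversely, the ``only if'' direction: let $\mathfrak{m}$ be maximal; it is prime, so Theorem \ref{prime} gives $\mathcal{F}_{\mathfrak m}$ with $g_f(\mathcal{F}_{\mathfrak m})\subseteq\mathfrak m$, hence $\overline{g_f(\mathcal{F}_{\mathfrak m})}\subseteq\overline{\mathfrak m}=\mathfrak m$ (maximal ideals are closed, Theorem \ref{ole-1}); since $\overline{g_f(\mathcal{F}_{\mathfrak m})}$ is itself a proper ideal and $\mathfrak m$ is maximal, equality follows.

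For part $(2)$, openness is Corollary \ref{aberto}, so only density needs proof. Let $x\in\overline{\Kset}_f$ and $r>0$ be arbitrary; I must find a unit in $B_r(x)$. If $x$ is already a unit there is nothing to do. Otherwise, if $x=0$ apply Lemma \ref{feixo}$(i)$; if $x\ne 0$ is a non-unit, apply Lemma \ref{rep} to get $a\in\Nset$ with $y:=x(1-\mathcal{X}_{N_a(\hat x)})+\mathcal{X}_{N_a(\hat x)}\in\Inv(\overline{\Kset}_f)$. Then $y-x = \mathcal{X}_{N_a(\hat x)}(1-x)=\mathcal{X}_{N_a(\hat x)} - x\mathcal{X}_{N_a(\hat x)}$, and on $N_a(\hat x)$ we have $|\hat x(\varphi)|<\dot\alpha_a(\varphi)$, so $\|x\mathcal{X}_{N_a(\hat x)}\|\le e^{-a}$ while $\|\mathcal{X}_{N_a(\hat x)}\|=1$ — that is not yet small. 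The fix is to replace the ``$+\mathcal{X}_{N_a(\hat x)}$'' patch by ``$+\dot\alpha_b\,\mathcal{X}_{N_a(\hat x)}$'' for a large $b$: one checks, via Theorem \ref{mosca}, that $y_b:=x(1-\mathcal{X}_{N_{a}(\hat x)})+\dot\alpha_b\mathcal{X}_{N_a(\hat x)}$ is still a unit provided $b$ is chosen with $b$ at least $a$ on the relevant region (the representative is bounded below by $\dot\alpha_{\max(a,b)}$ off $N_a$... actually off $N_a(\hat x)$ it is $\ge\dot\alpha_a$ by definition of $N_a$, and on $N_a(\hat x)$ it equals $\dot\alpha_b$, so it is everywhere $\ge\dot\alpha_{\max(a,b)}$, hence a unit). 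And $\|y_b-x\| = \|\mathcal{X}_{N_a(\hat x)}(\dot\alpha_b - x)\|\le\max\{\|\dot\alpha_b\mathcal{X}_{N_a(\hat x)}\|,\|x\mathcal{X}_{N_a(\hat x)}\|\}\le\max\{e^{-b},e^{-a}\}$. Choosing $a$ first large enough that $e^{-a}<r$ (possible by taking $a$ large in Proposition \ref{creio} / Lemma \ref{rep}, since $N_a(\hat x)$ only shrinks and the bound $|x\mathcal{X}_{N_a(\hat x)}|<\dot\alpha_a$ improves) and then $b\ge a$, we get $\|y_b-x\|<r$, so $y_b\in B_r(x)\cap\Inv(\overline{\Kset}_f)$.

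The main obstacle I anticipate is the ``if'' direction of part $(1)$: proving that $\overline{g_f(\mathcal{F})}$ is actually maximal (not merely prime or merely contained in a maximal ideal) requires showing that adjoining any outside element yields the whole ring, and this forces a careful case split following Theorem \ref{aproxim} — in particular handling case $(b)$, where no single $\mathcal{X}_{S_n}$ captures $x$ and one must pass to the limit and argue that the closure of the enlarged ideal contains $1$. Establishing the needed closure statement — that $x$ lies in the closed ideal generated by $\mathfrak{m}$ and a suitable unit — is where the topology (completeness, Proposition \ref{complete}, and the ultrametric estimates of Corollary \ref{norma}) does the real work, and it is the step most likely to need the full strength of the Approximation Theorem rather than a soft argument.
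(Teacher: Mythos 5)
Your part $(2)$ is essentially correct, and in fact takes a cleaner route than the paper: the paper proves density by splitting into cases $(a)$/$(b)$ of Theorem \ref{aproxim}, whereas your single formula $y_b:=x(1-\mathcal{X}_{N_a(\hat x)})+\dot{\alpha}_b\mathcal{X}_{N_a(\hat x)}$ works directly. One small repair: you do not need Proposition \ref{creio} or Lemma \ref{rep} to ``choose $a$ large'' (Proposition \ref{creio} only yields \emph{some} $a$ with $N_a(x)\in\mathcal{S}_f$); all your estimate uses is the defining inequality of $N_a(\hat x)$ (which holds for \emph{every} $a$), giving $\|x\mathcal{X}_{N_a(\hat x)}\|\le e^{-a}$, together with Theorem \ref{mosca} for invertibility of $y_b$ and Corollary \ref{norma} for the ultrametric bound.

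Part $(1)$ has a genuine gap, and it sits exactly where you predicted. You invert the paper's logic: you try to prove directly that $\overline{g_f(\mathcal{F})}$ is maximal by adjoining an outside element $x$ and producing $1$, but case $(b)$ of Theorem \ref{aproxim} — the hard case — is only waved at (``conclude $x\in\overline{\mathfrak{m}+(\text{units}\cdot\mathfrak{a})}$ and ultimately $1\in\mathfrak{a}$''); since $\mathfrak{a}$ is not assumed closed, a limit statement does not put anything into $\mathfrak{a}$, so no conclusion follows as written. Moreover your case $(a)$ sub-case $S^c\in\mathcal{F}$ has a one-line resolution you missed: $x\mathcal{X}_S=0$ gives $x=x\mathcal{X}_{S^c}\in g_f(\mathcal{F})\subseteq\mathfrak{m}$, contradicting $x\notin\mathfrak{m}$ — no detour through Lemma \ref{rep} is needed. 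Finally, your ``only if'' wrap-up is a non sequitur: from $\overline{g_f(\mathcal{F}_\mathfrak{m})}\subseteq\mathfrak{m}$, properness of the closure, and maximality of $\mathfrak{m}$ you cannot deduce equality, because maximality constrains ideals \emph{above} $\mathfrak{m}$, not below; that inference needs the maximality of $\overline{g_f(\mathcal{F}_\mathfrak{m})}$ itself, i.e.\ precisely the unproved direction. The paper proceeds the other way around: for $\mathfrak{m}$ maximal and $x\in\mathfrak{m}\setminus g_f(\mathcal{F}_\mathfrak{m})$ it rules out case $(a)$ (either sub-case yields a unit $x(1-\mathcal{X}_S)+\mathcal{X}_S$ inside $\mathfrak{m}$, via Lemma \ref{rep}, or $x\in g_f(\mathcal{F}_\mathfrak{m})$), shows in case $(b)$ that every $S_n^c\in\mathcal{F}_\mathfrak{m}$, hence $x=\lim_n x\mathcal{X}_{S_n^c}\in\overline{g_f(\mathcal{F}_\mathfrak{m})}$ by $(b)(ii)$; this gives $\mathfrak{m}\subseteq\overline{g_f(\mathcal{F}_\mathfrak{m})}$, and now maximality of $\mathfrak{m}$ \emph{is} usable since the inclusion points the right way; the converse then follows cheaply from Lemma \ref{idpro}, Theorem \ref{prime} (uniqueness of $\mathcal{F}$) and the direction just proved. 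If you want to keep your architecture, the missing piece is the dichotomy in case $(b)$: if some $S_n\in\mathcal{F}$, then $\mathcal{X}_{S_n}\in\mathfrak{m}$ and $x(1-\mathcal{X}_{S_n})+\dot{\alpha}_{a_n}\mathcal{X}_{S_n}$ is a unit lying in the ideal generated by $\mathfrak{m}$ and $x$; if every $S_n^c\in\mathcal{F}$, then $x-x\mathcal{X}_{S_n}=x\mathcal{X}_{S_n^c}\in g_f(\mathcal{F})$ and $x\mathcal{X}_{S_n}\to 0$, so $x\in\overline{g_f(\mathcal{F})}=\mathfrak{m}$ (here the hypothesis that $\mathfrak{m}$ is a \emph{closure} is what saves you), contradicting $x\notin\mathfrak{m}$. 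Without this, the theorem's core claim is not established.
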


\begin{proof} 


$(1)$ $(\Rightarrow)$ Suppose that $\mathfrak{m}$ is a maximal ideal of
$\overline{\Kset}_f$. Then $\mathfrak{m}$ is a prime ideal
of $\overline{\Kset}_f$ and from the Theorem of the approach there
exists only $\mathcal{F}=\mathcal{F}_\mathfrak{m}$ such that
$g_f(\mathcal{F})\subset \mathfrak{m}$. Let
$x\in\mathfrak{m}\setminus g_f(\mathcal{F})$. We now construct a
sequence $\{x_n\}_{n\in\Nset}$ in $g_f(\mathcal{F})$ such that
$x_n\underset{n\rightarrow\infty}{\longrightarrow} x$ in
$\overline{\Kset}_f$. For this we show that
$x$ satisfies the condition $(b)$ of the Theorem of the
Approach. Indeed, if $x$ does not satisfy condition
$(b)$ of the Theorem of the approach then, has been that $x$
it satisfies the condition $(a)$ of the Theorem of the approach, i.e.,
there exists
$S\in\mathcal{S}_f$ and $a\in\Nset$ such that
\begin{enumerate}
\item[$(i)$] $x\mathcal{X}_S=0$;
\item[$(ii)$] $|x\mathcal{X}_{S^c}|\ge \dot{\alpha}_a\mathcal{X}_{S^c}$.
\end{enumerate}
One remembers that this $S$ is accurately $N_a(x)$. Suppose that
$S\in\mathcal{F}$. Then $$\mathcal{X}_S\in g_f(\mathcal{F})\subset
\mathfrak{m}\Rightarrow \mathcal{X}_S\in\mathfrak{m}.$$ Thus,
$$y:=x(1-\mathcal{X}_S)+\mathcal{X}_S\in\mathfrak{m}$$ which is
nonsense. Therefore from Lemma \ref{rep} $y$ is invertible. Now,
assume that $S^c\in\mathcal{F}$. Then
$$\mathcal{X}_{S^c}=(1-\mathcal{X}_S)\in g_f(\mathcal{F})$$ and from
$(a)$ part $(i) ~x\mathcal{X}_S=0$ of Theorem of the approach it follows that
$$x\mathcal{X}_{S^c}=x(1-\mathcal{X}_S)=x\in g_f(\mathcal{F})$$ which
is nonsense. Therefore, hypothecally, $x\notin g_f(\mathcal{F})$. Hence,
we conclude that $x$ cannot satisfy condition $(a)$ of the Theorem 
of the Approach and, therefore, $x$ satisfies condition $(b)$ of the 
Theorem of the Approach, i.e., there exists sequences
$\{a_n\}\subset\Nset$ and $\{S_n\}\subset \mathcal{S}_f$ such that    
\begin{enumerate}
\item[$(i)$] $S_n\supset S_{n+1}, ~a_n<a_{n+1}$ and 
$$\lim_{n\to \infty}a_n=\infty;$$
\item[$(ii)$] $x\mathcal{X}_{S_n}\underset{n\rightarrow
    \infty}{\longrightarrow} 0$;
\item[$(iii)$] $|x\mathcal{X}_{S_n}|<\dot{\alpha}_{a_n}$
\end{enumerate}    
We affirm that $S_n^c\in\mathcal{F}$. Indeed, if this will not be the case,
then $S_n\in\mathcal{F}$ end therefore, $$\mathcal{X}_{S_n}\in
g_f(\mathcal{F})\subset\mathfrak{m}\Rightarrow
\mathcal{X}_{S_n}\in\mathfrak{m}$$ and once again we would have that
$$y=x(1-\mathcal{X}_{S_n})+\mathcal{X}_{S_n}\in\mathfrak{m}$$ which is
nonsense. Therefore from Lemma \ref{rep} $y\in\Inv(\overline{\Kset}_f)$.
Thus, $S_n^c\in\mathcal{F}$ which implies $\mathcal{X}_{S_n^c}\in
g_f(\mathcal{F})$. However $\mathcal{X}_{S_n^c}=1-\mathcal{X}_{S_n}$, hence
$$x_n=x\mathcal{X}_{S_n^c}=x-x\mathcal{X}_{S_n}$$ is a sequence in
$g_f(\mathcal{F})$ such that
$x_n\underset{n\rightarrow\infty}{\longrightarrow} x$, since from 
$(ii)$ of $(b)$ of the Theorem of the approach we have that 
$x\mathcal{X}_{S_n}\underset{n\rightarrow \infty}{\longrightarrow} 0$. 
Hence, $\mathfrak{m}=\overline{g_f(\mathcal{F}_\mathfrak{m})}$.
$(\Rightarrow)$ From Lemma \ref{idpro} $g_f(\mathcal{F})$ is a proper ideal 
of $\overline{\Kset}_f$ and, therefore, $g_f(\mathcal{F})\subset
\mathfrak{m}$ for some maximal ideal $\mathfrak{m}$ of
$\overline{\Kset}_f$. More still, for the Theorem of the approach 
we have that $\mathcal{F}=\mathcal{F}_\mathfrak{m}$ and the conclusion 
comes immediately from necessary condition.

$(2)$ We know of the Corollary \ref{aberto} that
$\Inv(\overline{\Kset}_f)$ it is an open subset of
$\overline{\Kset}_f$, hence, it remains to show that
$\Inv(\overline{\Kset}_f)$ is a subset dense of
$\overline{\Kset}_f$. That is, all element $x\in\overline{\Kset}_f$ 
is limit of some sequence of elements of
$\Inv(\overline{\Kset}_f)$. For the Lemma \ref{feixo} $(i)$, we have
that $0\in\overline{\Kset}_f$ is limit of a sequence in
$\Inv(\overline{\Kset}_f)$ \footnote{See the sequence 
$\{\dot{\alpha}_n\}_{n\in\Nset}$ in $\Inv(\overline{\Kset}_f)$ that is
$\tau_{sf}$-convergent the zero.}. It is enough then to study the case
where $x\in\overline{\Kset}_f\setminus \{0\}$ 
and $x\notin\Inv(\overline{\Kset}_f)$, therefore if
$x\in\Inv(\overline{\Kset}_f)$ we take the sequence constant. 
For the Theorem of the  da Approach $x$ it satisfies $(a)$ or $x$
it satisfies $(b)$. In each one of the cases we go to construct
sequences in $\Inv(\overline{\Kset}_f)$ that they are
$\tau_{sf}$-convergent the $x$. Initially let us assume that $x$ 
it satisfies $(a)$ and defines
$$x_n:=x(1-\mathcal{X}_S)+\dot{\alpha}_n\mathcal{X}_S.$$ We go to show that
$x_n\in\Inv(\overline{\Kset}_f), ~\forall~ n\in\Nset$ and that
$x_n\underset{n\rightarrow \infty}{\longrightarrow} x$. 
Indeed, for the item $(i)$ of $(a)$, we have that
$$x_n=x+\dot{\alpha}_n\mathcal{X}_S, ~\mbox{pois}~ x\mathcal{X}_S=0.$$ 
Now remeber that $S=N_a(x)$ and
$$x_n(\varphi)=\left\{\begin{array}{ll}
x(\varphi)+\dot{\alpha}_n(\varphi), &\mbox{se}~\varphi\in S\\
x(\varphi), &\mbox{se}~\varphi\in S^c,
\end{array}\right.
$$
we have that $$|x_n(\varphi)|\ge \dot{\alpha}_n(\varphi), ~\forall~\varphi
~\mbox{e}~\forall~ n\in\Nset.$$ For the Theorem \ref{mosca},
$x_n\in\Inv(\overline{\Kset}_f), ~\forall~n\in\Nset$, i.e.,
$\{x_n\}_{n\in\Nset}$ is a sequence in
$\Inv(\overline{\Kset}_f)$. Now, as 
$$x_n=x+\dot{\alpha}_n\mathcal{X}_S$$ follows that
$$x_n-x=\dot{\alpha}_n\mathcal{X}_S.$$ Hence for the pelo Corollary \ref{norma}
$(iv)$, we have that
$$\|x_n-x\|=\|\dot{\alpha}_n\mathcal{X}_S\|=e^{-n}\|\mathcal{X}_S\|=e^{-n},$$
i.e., $$\|x_n-x\|=e^{-n}$$ and as
$e^{-n}\underset{n\rightarrow\infty}{\longrightarrow} 0$ follows that
$\|x_n-x\|\underset{n\rightarrow\infty}{\longrightarrow} 0$ what it
implies $x_n\underset{n\rightarrow\infty}{\longrightarrow} x$. 
Finally, let us assume that $x$ satisfies the condition $(b)$ and defines
$$x_n:=x(1-\mathcal{X}_{S_n})+\dot{\alpha}_{a_n}\mathcal{X}_{S_n}.$$
We go to show that $x_n\in\Inv(\overline{\Kset}_f), ~\forall~n\in\Nset$
and that $x_n\underset{n\rightarrow\infty}{\longrightarrow} x$. Indeed, 
$$x_n(\varphi)=\left\{\begin{array}{ll}
\dot{\alpha}_{a_n}(\varphi), &\mbox{se}~\varphi\in S_n\\
x(\varphi), &\mbox{se}~\varphi\in S_n^c.
\end{array}\right.
$$
Recalling that in the demonstration of the Theorem of the approach
$S_n=N_{a_n}(x)$, we have that
$$|x_n(\varphi)|\ge\dot{\alpha}_{a_n}(\varphi), ~\forall~\varphi,
~\forall~n\in\Nset.$$ And, one more time, for the Theorem \ref{mosca}, come
$x_n\in\Inv(\overline{\Kset}_f), ~\forall~n\in\Nset$. Since
$$x_n=x-x\mathcal{X}_{S_n}+\dot{\alpha}_{a_n}\mathcal{X}_{S_n}$$
follows that 
\begin{eqnarray*}
\|x_n-x\|&=&\|\dot{\alpha}_{a_n}\mathcal{X}_{S_n}-x\mathcal{X}_{S_n}\|\\
&\le&\|\dot{\alpha}_{a_n}\mathcal{X}_{S_n}+x\mathcal{X}_{S_n}\|
~(\mbox{for the
  Corollary \ref{norma} $(i)$, we have that})\\  
&\le&\max\{\|\dot{\alpha}_{a_n}\mathcal{X}_{S_n}\|,\|x\mathcal{X}_{S_n}\|\}
~(\mbox{for the Corollary \ref{norma} $(iv)$, we have that})\\
&=&\max\{e^{-a_n},\|x\mathcal{X}_{S_n}\|\}.
\end{eqnarray*}
But for itens $(i)$ and $(ii)$ of $(b)$, we have that
$\max\{e^{-a_n},\|x\mathcal{X}_{S_n}\|\}\underset{n\rightarrow\infty}{\longrightarrow}0\Rightarrow\|x_n-x\|\underset{n\rightarrow\infty}{\longrightarrow}
0$ hence $x_n\underset{n\rightarrow\infty}{\longrightarrow}
x$. This sample our assertive one.
\end{proof}

\section{Algebraic Properties of $\overline{\Kset}_f$}\label{sec-04}
In this section we introduce a partial order in $\overline{\Rset}_f$
the ring of the Colombeau's full generalized numbers (on $\Rset$)
which we shall prove induces a total order in every residualy class
field. The base for this section is the works of Aragona, Juriaans, 
Oliveira and Scarpalézos \cite{AJOS} and Aragona, Fernandez and
Juriaans \cite{NTCA} who developed research, mostly related to
the ring of Colombeau's simplified generalized numbers    
$\overline{\Kset}$ (where $\Kset$ is $\Rset$ or $\Cset$). Actually we
prove a stronger result which will allow us to prove, in the next
sub-section, that $\overline{\Kset}_f$ contains minimal prime
ideals. Before we go on we define some conventions. 
\begin{Not}
{\tmsamp{If
$\mathcal{F}\in\mathit{P}_*(\mathcal{S}_f)$ then
${g_f}_r(\mathcal{F})$ denotes the ideal of $\overline{\Rset}_f$
generated by the characteristic functions of elements of $\mathcal{F}$
and $g_f(\mathcal{F})$ the ideal of $\overline{\Cset}_f$ generated by
the same function}} (see Section \ref{sec-03}). 
\end{Not}
\subsection{Relation of the order on $\overline{\Rset}_f$}\label{subsec-041}
The Lemma below from Aragona, Fernandez and Juriaans
\cite{NTCA} is the base for the order definition.
\begin{Lem}\label{base} {\tmsamp{For all $x\in\overline{\Rset}_f$ the following
statements are equivalent: 
\begin{enumerate}
\item[$(i)$] Every representative $\hat{x}$ of $x$ satisfies the condition\\
$$
(*)\left|\begin{array}{ll} 
\exists~ N\in\Nset ~\mbox{tal que}~ \forall~ b>0
~\forall~\varphi\in\mathcal{A}_N(\Kset) ~\exists \\
\eta=\eta(b,\varphi)\in I ~\mbox{tal que}~
\hat{x}(\varphi_\varepsilon)\ge -\varepsilon^b, 
~\forall~ \varepsilon\in I_\eta.
\end{array}\right.
$$  
\item[$(ii)$] There exists a representative $\hat{x}$ of $x$ such that
$\hat{x}$ satisfying $(*)$.
\item[$(iii)$] There exists a representative $x_*$ of $x$ such that
$x_*(\varphi)\ge 0, ~\forall~\varphi\in\mathcal{A}_0(\Kset)$.
\item[$(iv)$] There exists $N\in\Nset$ and a representative $x_*$ of
$x$ such that $x_*(\varphi)\ge 0, ~\forall~\varphi\in\mathcal{A}_N(\Kset)$. 
\end{enumerate}}}
\end{Lem}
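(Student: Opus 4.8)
The plan is to prove the cyclic chain of implications $(iii)\Rightarrow(iv)\Rightarrow(ii)\Rightarrow(i)\Rightarrow(iii)$, since the first two implications are trivial inclusions (weakening a universal quantifier, then specializing $b,\varphi,\eta$). So the real content is $(i)\Rightarrow(iii)$, and the only subtle point in the remaining steps is that $(*)$ is phrased as a property of \emph{every} representative, so in passing from $(ii)$ to $(i)$ one must check that if one representative satisfies $(*)$ then so does every other. This uses the description of null functions: if $\hat x$ and $\hat x'$ represent $x$, then $\hat x-\hat x'\in\mathcal{N}_f(\Kset)$, so by condition $(N)$ there are $p\in\Nset$ and $\gamma\in\Gamma$ with $|\hat x(\varphi_\varepsilon)-\hat x'(\varphi_\varepsilon)|\le C\varepsilon^{\gamma(q)-p}$ for $\varphi\in\mathcal{A}_q(\Kset)$, $q\ge p$; choosing $q$ large enough that $\gamma(q)-p>b$ and enlarging $N$ to $\max\{N,p,q\}$ absorbs the error into the $-\varepsilon^b$ bound (up to replacing $b$ by a slightly smaller exponent, which is harmless since $b>0$ is arbitrary). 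This is a routine "null perturbations don't matter" argument and I would only sketch it.

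The heart of the proof is $(i)\Rightarrow(iii)$: given a representative $\hat x$ satisfying $(*)$ with index $N$, I must manufacture a representative $x_*$ with $x_*(\varphi)\ge 0$ for \emph{all} $\varphi\in\mathcal{A}_0(\Kset)$, differing from $\hat x$ by a null function. The natural candidate is the positive-part truncation $x_*(\varphi):=\max\{\hat x(\varphi),0\}$ (equivalently $x_*:=\hat x\cdot\mathcal{X}_{\{\hat x\ge 0\}}$ adjusted so that negative values are set to $0$). Clearly $x_*\ge 0$ everywhere and $x_*$ is moderate (it is dominated in absolute value by $|\hat x|$). It remains to show $x_*-\hat x\in\mathcal{N}_f(\Kset)$, i.e. that the function $\varphi\mapsto x_*(\varphi)-\hat x(\varphi)$, which equals $0$ where $\hat x(\varphi)\ge 0$ and equals $-\hat x(\varphi)=|\hat x(\varphi)|$ where $\hat x(\varphi)<0$, satisfies condition $(N)$. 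Here is where $(*)$ is used: take $p=N$ and $\gamma(q)=q$ (or any divergent $\gamma$); for $q\ge N$ and $\varphi\in\mathcal{A}_q(\Kset)\subseteq\mathcal{A}_N(\Kset)$, apply $(*)$ with $b=q-N+1$ (say) to get $\eta>0$ with $\hat x(\varphi_\varepsilon)\ge-\varepsilon^b$ for $\varepsilon\in I_\eta$; hence on the set where $\hat x(\varphi_\varepsilon)<0$ we have $|x_*(\varphi_\varepsilon)-\hat x(\varphi_\varepsilon)|=|\hat x(\varphi_\varepsilon)|\le\varepsilon^b=\varepsilon^{\gamma(q)-p}$, and on the complementary set the difference is $0$, so the bound holds with $C=1$. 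That gives $x_*-\hat x\in\mathcal{N}_f(\Kset)$, completing $(i)\Rightarrow(iii)$.

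I expect the main obstacle — or at least the point requiring the most care — to be the interplay between the two universal quantifiers over representatives in $(*)$ and the fact that $\mathcal{A}_q(\Kset)$ shrinks as $q$ grows: one must be sure that enlarging the index $N$ (to accommodate the null-function estimate, or to match the $p$ coming from condition $(N)$) is legitimate, which it is because $(*)$ with index $N$ trivially implies $(*)$ with any larger index since $\mathcal{A}_{N'}(\Kset)\subseteq\mathcal{A}_N(\Kset)$. A secondary technical point is checking that $\varphi\mapsto\max\{\hat x(\varphi),0\}$ inherits moderateness from $\hat x$; this is immediate from $|\max\{\hat x(\varphi_\varepsilon),0\}|\le|\hat x(\varphi_\varepsilon)|$ together with condition $(M)$ for $\hat x$. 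Everything else is bookkeeping with the quantifier structure of Definition \ref{simplificada}, so I would present $(i)\Rightarrow(iii)$ in full and dispatch $(ii)\Rightarrow(i)$, $(iii)\Rightarrow(iv)$, $(iv)\Rightarrow(ii)$ with brief remarks.
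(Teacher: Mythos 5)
Your chain $(iii)\Rightarrow(iv)\Rightarrow(ii)$ and the truncation argument (which in fact only needs $(ii)$, not $(i)$) are sound: $x_*(\varphi):=\max\{\hat x(\varphi),0\}$ is moderate since $|x_*|\le|\hat x|$, and for $q\ge N$ and $\varphi\in\mathcal{A}_q(\Kset)\subseteq\mathcal{A}_N(\Kset)$ condition $(*)$ with $b=q-N+1$ gives $|x_*(\varphi_\varepsilon)-\hat x(\varphi_\varepsilon)|\le\varepsilon^{q-N+1}\le\varepsilon^{\gamma(q)-p}$ with $p=N$, $\gamma(q)=q$, so $x_*-\hat x\in\mathcal{N}_f(\Kset)$ (in $(iv)\Rightarrow(ii)$ you also use, and should state, that $\varphi\in\mathcal{A}_N(\Kset)$ implies $\varphi_\varepsilon\in\mathcal{A}_N(\Kset)$). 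The genuine gap is in $(ii)\Rightarrow(i)$, exactly the step you dismiss as routine. In $(*)$ the index $N$ is quantified \emph{before} $b$: one fixed $N$ must serve every $b>0$. Your repair ``choose $q$ with $\gamma(q)-p>b$ and enlarge $N$ to $\max\{N,p,q\}$'' makes the index depend on $q$, hence on $b$; what you prove is only ``$\forall b>0\ \exists N(b)\ \forall\varphi\in\mathcal{A}_{N(b)}(\Kset)\dots$'', which is weaker than $(*)$. The obstruction is real: for a fixed $\varphi\in\mathcal{A}_N(\Kset)$ lying in no $\mathcal{A}_q(\Kset)$ with larger $q$, condition $(N)$ only guarantees the fixed polynomial decay $C\varepsilon^{\gamma(N)-p}$ for the null perturbation, which cannot be absorbed into $-\varepsilon^b$ once $b>\gamma(N)-p$.

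Indeed, with the quantifiers as written the perturbation argument cannot work at all: let $q(\varphi)$ be the largest $q$ with $\varphi\in\mathcal{A}_q(\Kset)$ and put $\hat n(\varphi):=-(i(\varphi))^{q(\varphi)+1}$ if $q(\varphi)<\infty$, $\hat n(\varphi):=0$ otherwise. Using $i(\varphi_\varepsilon)=\varepsilon\, i(\varphi)$ and $q(\varphi_\varepsilon)=q(\varphi)$ one checks $\hat n\in\mathcal{N}_f(\Kset)$ (take $p=0$, $\gamma(q)=q$), so $\hat n$ is a representative of $0$, which satisfies $(iii)$; yet for any $N$, choosing $\varphi\in\mathcal{A}_N(\Kset)\setminus\mathcal{A}_{Q+1}(\Kset)$ with $Q=q(\varphi)<\infty$ and $b>Q+1$, the required inequality $-(\varepsilon\, i(\varphi))^{Q+1}\ge-\varepsilon^b$ fails for all small $\varepsilon$, so $\hat n$ violates $(*)$. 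Hence either $(*)$ must be read with ``$\forall b\ \exists N$'' (or with the exponent tied to the class $\mathcal{A}_q(\Kset)$ containing $\varphi$), in which case your sketch of $(ii)\Rightarrow(i)$ does go through, or $(ii)\Rightarrow(i)$ needs an argument you have not supplied. Since the paper itself gives no proof (it refers to \cite{NTCA}), you must settle this reading explicitly and prove the representative-independence for the version you adopt; as it stands, your proposal proves the equivalence of $(ii)$, $(iii)$, $(iv)$ but not their equivalence with $(i)$ as stated.
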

\begin{proof} See \cite{NTCA}.
\end{proof}

\begin{Def}\label{base-1} {\tmsamp{An element $x\in\overline{\Rset}_f$ is said
to be non-negative, quasi-positive or q-positive, if it has a
representative satisfying one of the conditions of the Lemma
\ref{base}. We shall denote this by $x\ge 0$. We shall also define $x$
to be non-positive, quasi-negative or q-negative if
$-x$ is q-positive and we denote this by $x\le 0$. If
$y\in\overline{\Rset}_f$ is another element then we write $x\le y$ 
(resp. $x\ge y$) if $y-x$ (resp. $x-y$) is q-positive.}}
\end{Def}

\begin{Rem} {\tmsamp{This definition is not a total order in
  $\overline{\Rset}_f$. To see this note that $$\hat{x}(\varphi)=
\hat{\dot{\alpha}}_1(\varphi)\sin(\hat{\dot{\alpha}}_{-1}(\varphi)),
~\forall ~\varphi\in\mathcal{A}_0(\Kset)$$ gives to an element which
is neither q-positive nor q-negative. It does however define a
partial order such that the sum and product of q-positive elements
are q-positive.}}
\end{Rem}

For every $x\in\overline{\Kset}_f$, if $\hat{x}$ is a representative
of $x$, the function $|\hat{x}|:\mathcal{A}_0\to \Rset_+$ defined by
$|\hat{x}|(\varphi)=|\hat{x}(\varphi)|$ is of course a moderate
function and $|x|:=\cl[|\hat{x}|]$ is independent of the
representative $\hat{x}$ of $x$. This Colombeau's full generalized
number is called the absolute value of $x$. Hence we have the natural
aplication $$x\in\overline{\Kset}_f\mapsto |x|\in\overline{\Rset}_{f+}.$$

\begin{Def} {\tmsamp{Let $x\in\overline{\Rset}_f$. Then
  $x^+:=\frac{x+|x|}{2}$ and $x^-:=\frac{x-|x|}{2}$ are respectively
  called the q-positive and q-negative parts of $x$.}}   
\end{Def}

Note that $x^+$ and $x^-$ depend only on $x$.
\begin{Def} {\tmsamp{For a given $u\in\mathcal{E}_f^M(\Kset)$ we define the
  next functions $\theta_u:\mathcal{A}_0(\Kset)\to \Kset$ such that
  $\theta_u(\varphi)=\exp(-i\Arg(u(\varphi)))$ and
  $\theta_u^{-1}:\mathcal{A}_0(\Kset):\to \Kset$ such that 
$\theta_u^{-1}(\varphi)=\exp(i\Arg(u(\varphi)))$,
where $\Arg(u(\varphi))$ denote the argument
of $u(\varphi)\in\Kset$, with the convention that $\Arg(0):=0$.}}
\end{Def}

In the case $\Kset=\Rset$ the images of $\theta_u$ and $\theta_u^{-1}$
are contained in $\{-1,1\}$. It is also clear that these are moderate
functions and inverses of each other. Moreover, we have
$u(\varphi)\theta_u(\varphi)=|u(\varphi)|,
~\forall~\varphi\in\mathcal{A}_0(\Kset)$. Therefore, if we denote for
$|u|(\varphi):=|u(\varphi)|, ~\forall~\varphi\in\mathcal{A}_0(\Kset)$,
we can to write $u\theta_u=|u|$ and, therefore, $|u|\in\mathcal{E}_f^M(\Rset)$.
\begin{Def} {\tmsamp{For a given $u\in\mathcal{E}_f^M(\Kset)$ we defined
  $\Theta_u=\cl[\theta_u]$ and $\Theta_u^{-1}=\cl[\theta_u^{-1}]$}}
\end{Def}

Since $\Theta_u\Theta_u^{-1}=1=\Theta_u^{-1}\Theta_u$ we have that
they are units. Note however that these functions depend on the
representative. The following proposition is easily proved.
\begin{Pro}\label{facil} {\tmsamp{Let $x,y\in\overline{\Rset}_f$. Then:
\begin{enumerate}
\item[$(i)$] $x=x^+$ iff $x=|x|$ iff $x$
    is q-positive.
\item[$(ii)$] $x=x^-$ iff $x=-|x|$ iff $x$
    is q-negative.
\item[$(iii)$] $(-x)^+=-(x^-)$ e $(-x)^-=-(x^+)$.
\item[$(iv)$] $|x|\ge 0\le x^+, ~x^-\le 0\ge -x^+, ~|-x|=|x|$ e
  $|x|\ge x$.
\item[$(v)$] $|x+y|\le |x|+|y|, ~||x|-|y||\le |x-y|$ (triangular inequality).
\item[$(vi)$] If $x\le y$ and $-x\le y$, then $|x|\le y$.
\item[$(vii)$] $x^+=x\left(\frac{1+\Theta_{\hat{x}}}{2}\right)$ and
  $x^-=x\left(\frac{1-\Theta_{\hat{x}}}{2}\right)$.
\item[$(viii)$] If
  $A=\{\varphi\in\mathcal{A}_0(\Kset)|\hat{x}(\varphi)\ge 0\}$ then
  $x^+=x\mathcal{X}_A$ and $x^-=x\mathcal{X}_{A^c}$.
\end{enumerate}}}
\end{Pro}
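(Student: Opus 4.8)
The plan is to verify the eight items of Proposition~\ref{facil} mostly by unwinding the definitions of $x^+$, $x^-$, $|x|$ and the q-order, reducing everything to pointwise statements about representatives. First I would fix a representative $\hat x$ of $x$ and note that $x^+ = \tfrac{x+|x|}{2}$, $x^- = \tfrac{x-|x|}{2}$ depend only on $x$, and that by Lemma~\ref{base}(iii) the element $x$ is q-positive if and only if it has a representative $x_*$ with $x_*(\varphi)\ge 0$ for all $\varphi\in\mathcal{A}_0(\Kset)$. For $(i)$: if $x=|x|$, then $x$ has the representative $|\hat x|\ge 0$, so $x$ is q-positive; conversely if $x$ is q-positive, pick $x_*\ge 0$, so $|x_*|=x_*$ pointwise, hence $|x|=x$; and $x=x^+$ is equivalent to $x=|x|$ since $x^+ - x = \tfrac{|x|-x}{2}$. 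Item $(ii)$ follows from $(i)$ applied to $-x$, using $|-x|=|x|$. Item $(iii)$ is the identity $(-x)^\pm = \tfrac{-x \pm |x|}{2} = -\tfrac{x \mp |x|}{2} = -(x^\mp)$, pure algebra.

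For $(iv)$ I would observe that $|x|$ has the nonnegative representative $|\hat x|$, so $|x|\ge 0$; then $x^+ = \tfrac{x+|x|}{2}$ has representative $\tfrac{\hat x + |\hat x|}{2}$, which is pointwise $\ge 0$, giving $x^+\ge 0$; similarly $\tfrac{\hat x - |\hat x|}{2}\le 0$ pointwise gives $x^-\le 0$; and $|x|\ge x$ because $|\hat x|-\hat x\ge 0$ pointwise. Item $(v)$, the triangle inequalities, reduces to the pointwise inequalities $|\hat x(\varphi)+\hat y(\varphi)|\le |\hat x(\varphi)|+|\hat y(\varphi)|$ and $\bigl||\hat x(\varphi)|-|\hat y(\varphi)|\bigr|\le |\hat x(\varphi)-\hat y(\varphi)|$ in $\Kset$, so the differences are q-positive by Lemma~\ref{base}(iii). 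For $(vi)$: from $x\le y$ and $-x\le y$ one gets representatives (after passing to a common one, possibly shrinking to some $\mathcal{A}_N(\Kset)$ and invoking Lemma~\ref{base}(iv)) with $y_*(\varphi)-\hat x(\varphi)\ge 0$ and $y_*(\varphi)+\hat x(\varphi)\ge 0$, hence $y_*(\varphi)\ge |\hat x(\varphi)|$, so $|x|\le y$. A mild technical point here is that $x\le y$ and $-x\le y$ a priori produce two different representatives of $y$; I would reconcile them by noting they differ by a null function and that Lemma~\ref{base}(iii) is representative-independent in the sense of Lemma~\ref{base}(i)$\Leftrightarrow$(iii).

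For $(vii)$ I would use the factorization $u\theta_u = |u|$ established just before the proposition: with $u = \hat x$ we get $\hat x\,\theta_{\hat x} = |\hat x|$ pointwise, hence $x\Theta_{\hat x} = |x|$ in $\overline{\Rset}_f$, and therefore $x\bigl(\tfrac{1+\Theta_{\hat x}}{2}\bigr) = \tfrac{x + x\Theta_{\hat x}}{2} = \tfrac{x+|x|}{2} = x^+$, and likewise for $x^-$. For $(viii)$, with $A=\{\varphi\mid \hat x(\varphi)\ge 0\}$, the key observation is the pointwise identity $\hat x(\varphi)\,\hat{\mathcal{X}}_A(\varphi) = \tfrac{\hat x(\varphi)+|\hat x(\varphi)|}{2}$: on $A$ both sides equal $\hat x(\varphi)$, and on $A^c$ both sides vanish. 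Hence $x\mathcal{X}_A = x^+$, and since $x\mathcal{X}_{A^c} = x - x\mathcal{X}_A = x - x^+ = x^-$, we are done. The only step requiring genuine care is $(vi)$ (and to a lesser degree the representative-matching in $(v)$ and $(viii)$): the rest is routine pointwise verification combined with the representative-independence supplied by Lemma~\ref{base}. The main obstacle is therefore purely bookkeeping — making sure that whenever we combine two q-order hypotheses we have legitimately passed to a single representative (or to a common $\mathcal{A}_N(\Kset)$) before arguing pointwise.
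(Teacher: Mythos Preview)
Your proposal is correct and matches the paper's approach: the paper simply states that the proposition ``is easily proved'' and gives no argument, so the natural route is exactly the one you take --- reduce each item to a pointwise statement about representatives and invoke Lemma~\ref{base}(iii) for the q-order. Your identification of item~$(vi)$ as the only place requiring care (reconciling two representatives via Lemma~\ref{base}(i)$\Leftrightarrow$(iii)) is apt, and the remaining items are indeed routine.
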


\begin{Rem} {\tmsamp{Note that if $z\in\overline{\Cset}_f$ then
  $|z|\in\overline{\Rset}_{f}, ~|z|\ge 0$ and so we may apply
  Proposition \ref{facil} where possible. In particular, the
triangular inequalities hold in this context.}}
\end{Rem}

\begin{Pro}[Convexity of ideals]\label{convex} {\tmsamp{Let
  $\mathfrak{J}$ be an ideal of $\overline{\Kset}_f$ and
  $x,y\in\overline{\Kset}_f$. Then:
\begin{enumerate}
\item[$(i)$] $x\in\mathfrak{J}$ if, and only if,
    $|x|\in\mathfrak{J}$.
\item[$(ii)$] If $x\in\mathfrak{J}$ and $|y|\le |x|$ then $y\in\mathfrak{J}$.
\item[$(iii)$] If $\Kset=\Rset$ and $0\le y\le x$ then $y\in\mathfrak{J}$.
\end{enumerate}}}
\end{Pro}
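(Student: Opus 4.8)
The plan is to prove the three items in the order $(i)$, then $(ii)$, then $(iii)$, since $(ii)$ will rest on $(i)$ and $(iii)$ is an immediate special case of $(ii)$.

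For $(i)$, the key algebraic fact is the factorization $u\theta_u=|u|$ at the level of representatives, which gives $|x| = x\,\Theta_{\hat{x}}$ and, since $\Theta_{\hat{x}}$ is a unit with inverse $\Theta_{\hat{x}}^{-1}$, also $x = |x|\,\Theta_{\hat{x}}^{-1}$. Thus if $x\in\mathfrak{J}$ then $|x| = x\Theta_{\hat{x}}\in\mathfrak{J}$ because $\mathfrak{J}$ is an ideal and $\Theta_{\hat{x}}\in\overline{\Kset}_f$; conversely if $|x|\in\mathfrak{J}$ then $x = |x|\Theta_{\hat{x}}^{-1}\in\mathfrak{J}$ for the same reason. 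One subtlety worth a sentence: $\Theta_{\hat{x}}$ depends on the chosen representative $\hat{x}$, but for the membership argument any fixed representative does, so this causes no harm.

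For $(ii)$, assume $x\in\mathfrak{J}$ and $|y|\le|x|$, meaning $|x|-|y|$ is q-positive. By $(i)$ it suffices to show $|y|\in\mathfrak{J}$, and since $|x|\in\mathfrak{J}$ by $(i)$, it is enough to produce some $w\in\overline{\Kset}_f$ with $|y| = w\,|x|$. The natural candidate is $w(\varphi) := |y(\varphi)|/|x(\varphi)|$ when $x(\varphi)\ne 0$ and $w(\varphi):=0$ otherwise; the inequality $|y|\le|x|$ forces $|y(\varphi)|=0$ wherever $x(\varphi)=0$ on a suitable $\mathcal{A}_N(\Kset)$ (using the characterization of q-positivity via a representative that is pointwise $\ge 0$, Lemma \ref{base}), so that indeed $|y| = w\,|x|$ as classes. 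The point requiring care — and the main obstacle — is showing that $w$ is moderate: one must control $|y(\varphi_\varepsilon)|/|x(\varphi_\varepsilon)|$ from above by some $\varepsilon^{-p}$, and a priori $|x(\varphi_\varepsilon)|$ can be very small. Here one exploits that $x\in\mathfrak{J}$ is not required to be a unit, so instead of inverting $x$ globally one splits the domain: on the set where $|x(\varphi)|$ is comparable to a power $\dot{\alpha}_a(\varphi)$ the quotient is moderate, and on the complementary "small" set $|y|$ is even smaller, so the quotient stays bounded by $1$. Packaging this via the sets $N_a(x)$ of Definition \ref{nax} and the characteristic functions $\mathcal{X}_{N_a(x)}$, and using that these characteristic functions lie in $\mathfrak{J}$ or that $x\mathcal{X}_{N_a(x)}$ is negligible appropriately, yields $|y|\in\mathfrak{J}$ directly without ever forming $w$ in one piece. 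I expect this case-split to be the technical heart of the argument.

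For $(iii)$, suppose $\Kset=\Rset$ and $0\le y\le x$. Then $x-y\ge 0$ and $y\ge 0$, so $|y|=y$ by Proposition \ref{facil}$(i)$, and adding $0\le y\le x$ gives $y\le x = |x|$, i.e. $|y|=y\le x = |x|$; hence $|y|\le|x|$ and $(ii)$ applies to conclude $y\in\mathfrak{J}$. This last part is purely formal once $(i)$ and $(ii)$ are in place.
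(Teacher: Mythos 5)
Parts $(i)$ and $(iii)$ of your proposal are fine and coincide with the paper's argument ($|x|=x\,\Theta_{\hat{x}}$ with $\Theta_{\hat{x}}$ a unit, and $(iii)$ reducing to $(ii)$ via Proposition \ref{facil}). The problem is $(ii)$, which you leave as a sketch, and the sketch rests on two claims that fail. The relation $|y|\le|x|$ is the q-order: it says $|x|-|y|$ admits \emph{some} nonnegative representative, equivalently (Lemma \ref{base}$(i)$) that for your chosen representatives $|\hat{y}(\varphi_\varepsilon)|\le|\hat{x}(\varphi_\varepsilon)|+\varepsilon^{b}$ for every $b>0$, $\varphi\in\mathcal{A}_N(\Kset)$ and small $\varepsilon$; it is \emph{not} a pointwise inequality between $\hat{y}$ and $\hat{x}$. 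Hence it does not force $|\hat{y}(\varphi)|=0$ where $\hat{x}(\varphi)=0$, and on the ``small'' set $N_a(x)$ the quotient is not bounded by $1$: with $|\hat{x}(\varphi_\varepsilon)|=e^{-2/\varepsilon}$ and $|\hat{y}(\varphi_\varepsilon)|=e^{-1/\varepsilon}$ one has $|y|\le|x|$, yet $|\hat{y}|/|\hat{x}|=e^{1/\varepsilon}$. Moreover, the splitting by $N_a(x)$ with a fixed $a$ only yields $y=c_a x+r_a$ with $|r_a|$ of order $\dot{\alpha}_a$, i.e.\ $y\in\mathfrak{J}+B_{e^{-a}}$; letting $a\to\infty$ puts $y$ in the \emph{closure} of $\mathfrak{J}$, which is not enough because $\mathfrak{J}$ is an arbitrary ideal and need not be closed (the paper itself exhibits non-closed ideals $g_f(\mathcal{F})$, cf.\ Theorem \ref{nvon}); and there is no reason whatsoever that $\mathcal{X}_{N_a(x)}\in\mathfrak{J}$.

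The repair is to cut off by comparing $\hat{y}$ with $\hat{x}$ rather than $\hat{x}$ with $\dot{\alpha}_a$: set $\hat{c}(\varphi):=\hat{y}(\varphi)/\hat{x}(\varphi)$ if $\hat{x}(\varphi)\ne 0$ and $|\hat{y}(\varphi)|\le 2|\hat{x}(\varphi)|$, and $\hat{c}(\varphi):=0$ otherwise. Then $|\hat{c}|\le 2$, so $\hat{c}$ is moderate, and on the set where $\hat{c}=0$ one has $|\hat{y}(\varphi_\varepsilon)|>2|\hat{x}(\varphi_\varepsilon)|$, which together with $|\hat{y}(\varphi_\varepsilon)|\le|\hat{x}(\varphi_\varepsilon)|+\varepsilon^{b}$ gives $|\hat{y}(\varphi_\varepsilon)|\le 2\varepsilon^{b}$ for every $b$; hence $\hat{y}-\hat{c}\hat{x}$ is negligible and $y=cx\in\mathfrak{J}$ outright (no need to pass through $|y|$). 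For comparison, the paper's own proof of $(ii)$ simply writes $u=|y|/|x|$ and multiplies, ignoring exactly the well-definedness and moderateness issues you flagged; so your diagnosis of where the difficulty lies is sound, but the route you sketch does not close it as written.
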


\begin{proof} $(i)$ If $x\in\mathfrak{J}$, then
$|x|=x\Theta_{\hat{x}}\in\mathfrak{J}$. Therefore $\mathfrak{J}$ is an
ideal. Reciprocally, if $|x|\in\mathfrak{J}$, then
$|x|=x\Theta_{\hat{x}}\in\mathfrak{J}$. Now, since
$\Theta_{\hat{x}}\in\Inv(\overline{\Kset}_f)$ follows that
$x=|x|\Theta_{\hat{x}}^{-1}\in\mathfrak{J}$, i.e., $x\in\mathfrak{J}$.

$(ii)$ If $|y|=|x|$, then from $(i), ~|x|\in\mathfrak{J}$. Therefore
$x\in\mathfrak{J}$ hence
$|y|\in\mathfrak{J}$ and from the reciprocal of $(i)$ it follows that
$y\in\mathfrak{J}$. Now, if $|y|<|x|$, then $|x|\ne 0$. Therefore if
$|x|=0$, then $|y|<0$ which is nonsense. Therefore $|y|\ge 0$. Let
$u=\frac{|y|}{|x|}$. Then $|y|=u|x|\in\mathfrak{J}$. Therefore from $(i)$
$|x|\in\mathfrak{J}$ ($x\in\mathfrak{J}$) and $\mathfrak{J}$ is a
ideal of $\overline{\Kset}_f$. Once again, from the reciprocal one of $(i)$, 
we have that $y\in\mathfrak{J}$. 

$(iii)$ Follows from $(ii)$ and Proposition \ref{facil}.
\end{proof}

\begin{Rem} {\tmsamp{If $z\in\overline{\Cset}_f$ then it is clear that we may
  write $z=x+iy$, with $x,y\in\overline{\Rset}_f$, where $i$
  is the class of the constant function $\sqrt{-1}$. We define $\Re e(z):=x$ and
  $\Im m(z):=y$, the real and imaginary part of $z$, respectively. 
Clearly, if $\hat{z}=\hat{x}+\sqrt{-1}\hat{y}$ is a
  representative of $z$ then $\hat{x}$ and $\hat{y}$ are
  representatives of $x$ and $y$, respectively. It is also clear that
$\overline{\Cset}$ is a $\overline{\Rset}$-module and that the maps 
$\Re e,\Im m:\overline{\Cset}_f\to\overline{\Rset}_f$ are
$\overline{\Rset}_f$-epimorphisms. Hence if
$\mathfrak{J}\lhd\overline{\Cset}_f$ is an ideal then its image by
these epimorphisms are ideals ideals of $\overline{\Rset}_f$ and it is
easily seen that they coincide; it will be denoted by
$\mathfrak{J}_r$ and called the real part of
  $\mathfrak{J}$. We have by Proposition \ref{convex} that
$\mathfrak{J}_r\subset\mathfrak{J}$.}}
\end{Rem}

Note that the involutions $c:\Cset\to\Cset$ defines by
$c(z)=\overline{z}$ extends to an involution
$\overline{\Cset}_f\to \overline{\Cset}_f$ of $\overline{\Cset}_f$. 
We shall call this involution conjugation. The following result is
clear.
\begin{Lem} {\tmsamp{Let $\mathfrak{J}\lhd\overline{\Cset}_f$ be an ideal of
  $\overline{\Cset}_f$. Then:
  \begin{enumerate}
  \item[$(i)$] $\mathfrak{J}_r\subset\mathfrak{J}$.
\item[$(ii)$] $\mathfrak{J}=\mathfrak{J}_r+i\mathfrak{J}_r$ and
  $\mathfrak{J}$ is invariant under conjugation.
\item[$(iii)$] $\mathfrak{J}_r=\mathfrak{J}\cap\overline{\Rset}_f$.  
\end{enumerate}}}
\end{Lem}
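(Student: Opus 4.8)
The three assertions are all routine once the conjugation involution and the decomposition $z = \Re e(z) + i\,\Im m(z)$ are available, so the plan is to unwind the definitions in the order $(ii)$, then $(iii)$, deriving $(i)$ as a by-product (indeed $(i)$ is already stated in the preceding Remark via Proposition \ref{convex}, so I would simply cite it). First I would record the elementary identities $\Re e(z) = \tfrac{1}{2}(z + \overline{z})$ and $\Im m(z) = \tfrac{1}{2i}(z - \overline{z})$ in $\overline{\Cset}_f$, which follow from the fact that conjugation is a ring involution extending $c$ on the constants $\Cset$ and fixing the copy of $\overline{\Rset}_f$ inside $\overline{\Cset}_f$ while sending $i$ to $-i$.

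For $(ii)$: given $z \in \mathfrak{J}$, the identity $\overline{z} = \Re e(z) - i\,\Im m(z)$ shows $\overline{z} \in \mathfrak{J}$ provided I first know $\Re e(z), \Im m(z) \in \mathfrak{J}$ — but this is exactly the content of the preceding Remark, namely that $\Re e$ and $\Im m$ map $\mathfrak{J}$ into $\mathfrak{J}_r$ and that $\mathfrak{J}_r \subset \mathfrak{J}$ by Proposition \ref{convex} (since for $x = \Re e(z)$ one has $|x| \le |z|$, so $x$, hence $|x|$, lies in $\mathfrak{J}$). Thus $\mathfrak{J}$ is conjugation-invariant. Conversely, for any $z \in \mathfrak{J}$ we may write $z = \Re e(z) + i\,\Im m(z)$ with both real and imaginary parts in $\mathfrak{J}_r$, giving $\mathfrak{J} \subseteq \mathfrak{J}_r + i\mathfrak{J}_r$; the reverse inclusion is immediate because $\mathfrak{J}_r \subset \mathfrak{J}$ and $i \in \overline{\Cset}_f$, so $i\mathfrak{J}_r \subset \mathfrak{J}$ as well. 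Hence $\mathfrak{J} = \mathfrak{J}_r + i\mathfrak{J}_r$.

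For $(iii)$: the inclusion $\mathfrak{J}_r \subseteq \mathfrak{J} \cap \overline{\Rset}_f$ is clear from $(i)$ (i.e.\ $\mathfrak{J}_r \subset \mathfrak{J}$) together with $\mathfrak{J}_r \subseteq \overline{\Rset}_f$ by construction. For the reverse inclusion, take $w \in \mathfrak{J} \cap \overline{\Rset}_f$; since $w$ is real, $\Re e(w) = w$, and by the Remark $\Re e$ carries $\mathfrak{J}$ into $\mathfrak{J}_r$, so $w = \Re e(w) \in \mathfrak{J}_r$. This finishes $(iii)$, and assertion $(i)$ is precisely the inclusion $\mathfrak{J}_r \subset \mathfrak{J}$ already used above.

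\textbf{Main obstacle.} There is no serious obstacle: the only point requiring a moment of care is the justification, inside $\overline{\Cset}_f$, that the conjugation defined representative-wise is well defined (independent of representative) and is a ring homomorphism fixing $\overline{\Rset}_f$ pointwise — but this is standard, since $\mathcal{N}_f(\Cset)$ and $\mathcal{E}_f^M(\Cset)$ are visibly stable under pointwise complex conjugation (the modulus estimates in conditions $(M)$ and $(N)$ are unaffected), and on constants it agrees with $c$. Everything else is bookkeeping with the $\overline{\Rset}_f$-module decomposition $\overline{\Cset}_f = \overline{\Rset}_f \oplus i\,\overline{\Rset}_f$ and the convexity of ideals from Proposition \ref{convex}.
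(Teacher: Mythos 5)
Your argument is correct and is exactly the verification the paper has in mind: the paper states this lemma without proof (``The following result is clear''), relying on the preceding Remark (definition of $\mathfrak{J}_r$ as the common image under $\Re e$, $\Im m$, and $\mathfrak{J}_r\subset\mathfrak{J}$ via Proposition \ref{convex}), which is precisely what you use. Your additional check that conjugation is well defined on $\overline{\Cset}_f$ is a harmless (and welcome) extra detail.
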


\begin{Cor} {\tmsamp{Let $\mathcal{F}\in\mathit{P}_*(\mathcal{S}_f)$. Then: 
  \begin{enumerate}
  \item[$(i)$] ${g_f}_r(\mathcal{F})$ is the real part of
    $g_f(\mathcal{F})$ 
(see Definition \ref{degf}). 
\item[$(ii)$] If $z\in\overline{\Cset}_f$ then $z\in
  g_f(\mathcal{F})$ if and only if $|z|\in{g_f}_r(\mathcal{F})$.  
\end{enumerate}}}
\end{Cor}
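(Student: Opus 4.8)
The statement has two parts: $(i)$ identifying ${g_f}_r(\mathcal{F})$ with the real part of $g_f(\mathcal{F})$, and $(ii)$ a membership criterion for complex elements. The plan is to reduce everything to the preceding Lemma (which gives $\mathfrak{J}_r = \mathfrak{J}\cap\overline{\Rset}_f$, $\mathfrak{J} = \mathfrak{J}_r + i\mathfrak{J}_r$, and convexity via Proposition \ref{convex}) applied to the specific ideal $\mathfrak{J} := g_f(\mathcal{F})\lhd\overline{\Cset}_f$.

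For $(i)$, first I would observe that the generators of ${g_f}_r(\mathcal{F})$ and of $g_f(\mathcal{F})$ are literally the same characteristic functions $\mathcal{X}_A$ with $A\in\mathcal{F}$, the only difference being whether we take $\overline{\Rset}_f$-linear or $\overline{\Cset}_f$-linear combinations. Since each $\hat{\mathcal{X}}_A$ is a real-valued (indeed $\{0,1\}$-valued) moderate function, $\mathcal{X}_A\in\overline{\Rset}_f$, so ${g_f}_r(\mathcal{F}) = g_f(\mathcal{F})\cap\overline{\Rset}_f$: the inclusion $\subseteq$ is immediate, and for $\supseteq$, if $z = \sum_i a_i\mathcal{X}_{A_i}\in\overline{\Rset}_f$ with $a_i\in\overline{\Cset}_f$, write $a_i = b_i + ic_i$ with $b_i,c_i\in\overline{\Rset}_f$; taking real parts and using that the $\mathcal{X}_{A_i}$ are real gives $z = \Re e(z) = \sum_i b_i\mathcal{X}_{A_i}\in {g_f}_r(\mathcal{F})$. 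Then part $(iii)$ of the preceding Lemma, applied to $\mathfrak{J} = g_f(\mathcal{F})$, says $\mathfrak{J}_r = \mathfrak{J}\cap\overline{\Rset}_f$, which is exactly ${g_f}_r(\mathcal{F})$; this is the assertion of $(i)$.

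For $(ii)$, let $z\in\overline{\Cset}_f$. If $z\in g_f(\mathcal{F})$, then since $g_f(\mathcal{F})$ is an ideal and $|z| = z\,\Theta_{\hat{z}}$ with $\Theta_{\hat{z}}\in\Inv(\overline{\Cset}_f)$ (by the discussion preceding Proposition \ref{facil}), we get $|z|\in g_f(\mathcal{F})$; but $|z|\in\overline{\Rset}_f$, so $|z|\in g_f(\mathcal{F})\cap\overline{\Rset}_f = {g_f}_r(\mathcal{F})$ by part $(i)$. Conversely, if $|z|\in {g_f}_r(\mathcal{F})\subset g_f(\mathcal{F})$, then applying Proposition \ref{convex}$(i)$ to the ideal $\mathfrak{J} = g_f(\mathcal{F})$ of $\overline{\Cset}_f$ — which states $z\in\mathfrak{J}\iff |z|\in\mathfrak{J}$ — yields $z\in g_f(\mathcal{F})$. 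This closes the argument.

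The only point requiring a little care — and the place I would expect to be the main obstacle, modest as it is — is the identification ${g_f}_r(\mathcal{F}) = g_f(\mathcal{F})\cap\overline{\Rset}_f$ used in $(i)$: one must be sure that no element of $g_f(\mathcal{F})$ that happens to land in $\overline{\Rset}_f$ fails to be expressible as a real combination of the $\mathcal{X}_A$. This follows cleanly from the real-valuedness of the generators via the real-part projection as above, but it is worth spelling out because it is the one step where the ring of scalars changes. Everything else is a direct invocation of the preceding Lemma and of Proposition \ref{convex}.
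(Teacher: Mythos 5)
Your proof is correct, and it follows exactly the route the paper intends: the paper states this Corollary without proof, as an immediate consequence of the preceding Lemma (in particular $\mathfrak{J}_r=\mathfrak{J}\cap\overline{\Rset}_f$ applied to $\mathfrak{J}=g_f(\mathcal{F})$) together with Proposition \ref{convex}$(i)$ and the unit $\Theta_{\hat z}$. Your extra care in checking ${g_f}_r(\mathcal{F})=g_f(\mathcal{F})\cap\overline{\Rset}_f$ via the real-part projection is precisely the small verification the paper leaves to the reader, so there is nothing to correct.
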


\begin{Lem}\label{val} {\tmsamp{Let 
$\mathcal{F}\in\mathit{P}_*(\mathcal{S}_f)$ and
$x,y\in\overline{\Rset}_f$. Then the following hold:
\begin{enumerate}
\item[$(i)$] If $(x-y)\in {g_f}_r(\mathcal{F})$ and
$x^-\in{g_f}_r(\mathcal{F})$ then $y^-\in {g_f}_r(\mathcal{F})$.
\item[$(ii)$] The part $x^+$ or $x^-$ of $x$ belongs to ${g_f}_r(\mathcal{F})$.
\end{enumerate}}}
\end{Lem}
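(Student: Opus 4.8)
\textbf{Proof plan for Lemma \ref{val}.}
The plan is to exploit the two structural facts about $\mathcal{F}\in\mathit{P}_*(\mathcal{S}_f)$ that are already available: first, that for every $A\in\mathcal{S}_f$ exactly one of $A$, $A^c$ lies in $\mathcal{F}$ (Definition \ref{def}$(i)$), and second, the convexity of ideals (Proposition \ref{convex}), since ${g_f}_r(\mathcal{F})$ is an ideal of $\overline{\Rset}_f$. I would also use the explicit description $x^+=x\mathcal{X}_A$, $x^-=x\mathcal{X}_{A^c}$ from Proposition \ref{facil}$(viii)$, where $A=\{\varphi\in\mathcal{A}_0(\Kset)\mid\hat{x}(\varphi)\ge0\}$, together with $0\le x^+$, $-x^-\ge0$, and the triangular inequalities of Proposition \ref{facil}$(iv)$--$(v)$.

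For part $(ii)$, fix a representative $\hat{x}$ and set $A=\{\varphi\mid\hat{x}(\varphi)\ge0\}$. If $A\notin\mathcal{S}_f$, then by Lemma \ref{vert}/the discussion preceding it one of $\mathcal{X}_A$, $\mathcal{X}_{A^c}$ is null; say $\mathcal{X}_{A^c}\in\mathcal{N}_f(\Kset)$, so $x^-=x\mathcal{X}_{A^c}=0\in{g_f}_r(\mathcal{F})$, and symmetrically in the other case $x^+=0\in{g_f}_r(\mathcal{F})$. If instead $A\in\mathcal{S}_f$, then Definition \ref{def}$(i)$ forces exactly one of $A,A^c$ into $\mathcal{F}$; if $A^c\in\mathcal{F}$ then $\mathcal{X}_{A^c}\in g_f(\mathcal{F})$ so $x^-=x\mathcal{X}_{A^c}\in g_f(\mathcal{F})$, and since $x^-\in\overline{\Rset}_f$ it lies in the real part ${g_f}_r(\mathcal{F})$; if $A\in\mathcal{F}$ then likewise $x^+=x\mathcal{X}_A\in{g_f}_r(\mathcal{F})$. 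Thus one of the two parts always belongs to ${g_f}_r(\mathcal{F})$.

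For part $(i)$, suppose $(x-y)\in{g_f}_r(\mathcal{F})$ and $x^-\in{g_f}_r(\mathcal{F})$. I want $y^-\in{g_f}_r(\mathcal{F})$. Write $y=x-(x-y)$, so $-y^-=(-y)^+\le\,$something; more directly, I would estimate $|y^-|$ in terms of $|x^-|$ and $|x-y|$. Using $y\ge x-|x-y|$ and $x\ge x^-$ (since $x-x^-=x^+\ge0$), one gets $y\ge x^--|x-y|$, hence $-y\le -x^-+|x-y|$, and combining with $-y^-=(-y)^+\le (-y)\vee 0$ leads, via Proposition \ref{facil}$(iv)$--$(vi)$, to $|y^-|\le |x^-|+|x-y|$. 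Since ${g_f}_r(\mathcal{F})$ is an ideal containing both $x^-$ and $x-y$, it contains $|x^-|+|x-y|$, and then by convexity (Proposition \ref{convex}$(ii)$) it contains $y^-$.

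The main obstacle I anticipate is pinning down the inequality $|y^-|\le|x^-|+|x-y|$ rigorously within the q-order, because the q-order is only partial and not every pair of elements is comparable; the argument has to be routed entirely through the absolute-value inequalities of Proposition \ref{facil}, which do hold unconditionally, rather than through case analysis on signs. Once that inequality is in hand, both parts close immediately by the ideal and convexity properties.
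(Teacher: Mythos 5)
Part $(i)$ of your plan is essentially the paper's argument. The paper bounds $|x^--y^-|=\bigl|\tfrac{x-|x|}{2}-\tfrac{y-|y|}{2}\bigr|\le\tfrac12\bigl(|x-y|+\bigl||y|-|x|\bigr|\bigr)\le|x-y|$ using Proposition \ref{facil}$(v)$, then applies convexity (Proposition \ref{convex}) to get $x^--y^-\in{g_f}_r(\mathcal{F})$ and concludes $y^-=x^--(x^--y^-)\in{g_f}_r(\mathcal{F})$. Your variant inequality $|y^-|\le|x^-|+|x-y|$ follows from that same computation plus one more triangle inequality and closes the argument just as well; but your sketched derivation of it through sign manipulations (``$-y^-=(-y)^+\le(-y)\vee 0$'') is not available: no lattice supremum is defined in the paper, and in fact $(-y)^+\ge -y$ rather than $\le$. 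The fix is exactly what you anticipate in your closing paragraph and what the paper does: derive the bound from the explicit formulas $x^\pm=\tfrac{x\pm|x|}{2}$ and Proposition \ref{facil}$(v)$ only. The main case of your part $(ii)$ ($A\in\mathcal{S}_f$, then $A$ or $A^c$ lies in $\mathcal{F}$ and Proposition \ref{facil}$(viii)$ finishes) also coincides with the paper; note that since $x\in\overline{\Rset}_f$ you get $x\mathcal{X}_{A^c}\in{g_f}_r(\mathcal{F})$ directly, without passing through the real part of $g_f(\mathcal{F})$.

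The genuine gap is your degenerate case in $(ii)$: the claim that $A\notin\mathcal{S}_f$ forces one of $\mathcal{X}_A$, $\mathcal{X}_{A^c}$ to be null. Lemma \ref{vert} does not say this — it concerns only the special sets $N_a(x)$ of Definition \ref{nax} — and the claim is false for general $A\subset\mathcal{A}_0(\Kset)$ in the full setting. Indeed, $A\notin\mathcal{S}_f$ only means that for some $p$ every scaling ray $\varepsilon\mapsto\varphi_\varepsilon$ with $\varphi\in\mathcal{A}_p(\Kset)$ eventually stays in $A$ or eventually stays in $A^c$; different $\varphi$ may choose different sides. For instance, if $A$ is defined by a condition invariant under $\varphi\mapsto\varphi_\varepsilon$ (such as the sign of the first non-vanishing moment of $\varphi$), then no ray oscillates, so $A\notin\mathcal{S}_f$, yet both $A$ and $A^c$ meet every $\mathcal{A}_q(\Kset)$ along full rays, so neither $\hat{\mathcal{X}}_A$ nor $\hat{\mathcal{X}}_{A^c}$ is in $\mathcal{N}_f(\Kset)$, and neither $x^+$ nor $x^-$ is $0$; your case analysis simply does not cover such $x$. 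This $\varphi$-dependence is exactly what distinguishes $\overline{\Kset}_f$ from the simplified ring, where the dichotomy you invoke does hold. The paper routes the degenerate alternatives differently: if $x$ is q-positive then $x^-=0$, if q-negative then $x^+=0$ (Proposition \ref{facil}$(i)$--$(ii)$), and only for $x$ neither q-positive nor q-negative does it pass to the sign set and Proposition \ref{facil}$(viii)$ — the paper is itself terse about why that set then belongs to $\mathcal{S}_f$, but your reduction as written rests on a statement that Lemma \ref{vert} does not assert and that fails in general, so this step must be replaced rather than cited.
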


\begin{proof} $(i)$ We have from Proposition \ref{facil} item $(v)$ that
\begin{eqnarray*}
  ||x^{-}-y^{-}||&=&\left|\left|\frac{x-|x|}{2}-\frac{y-|y|}{2}\right|\right|\\
&=&\frac{1}{2}|x-|x|-y+|y||\\
&\le&\frac{1}{2}(|x-y|+||y|-|x||)\\
&\le&|x-y|,
\end{eqnarray*}
i.e., $||x^{-}-y^{-}||\le |x-y|$. Since
$(x-y)\in{g_f}_r(\mathcal{F})$, it follows from Proposition
\ref{convex} item $(ii)$ that $|x^{-}-y^{-}|\in{g_f}_r(\mathcal{F})$
and from the reciprocal of \ref{convex} item $(i)$ we have that
$x^{-}-y^{-}\in{g_f}_r(\mathcal{F})$ and since
$x^{-}\in{g_f}_r(\mathcal{F})$ it follows that $y^{-}\in{g_f}_r(\mathcal{F})$.

$(ii)$ We can assume here that $x$ is q-positive and non 
q-negative hence, $x$ has a representative $\hat{x}$ such that
$\theta_{\hat{x}}\notin\{\pm 1\}$ what it means that if
$A:=\{\varphi\in\mathcal{A}_0(\Kset)|\theta_{\hat{x}}\equiv 1\}$,
then $A$ or $A^c$ belongs in $\mathcal{F}$ and the result follows from
Proposition \ref{facil} item $(viii)$.
\end{proof}

\begin{Def}\label{ordem} {\tmsamp{Let 
$\alpha\in\overline{\Rset}_f/{g_f}_r(\mathcal{F})$ be given. 
We say that $\alpha$ is non-negative, $\alpha\ge 0$, if $\alpha$ has a
representative $a\in\overline{\Rset}_f$ such that the part negative,
  $a^-$, belongs to $\in {g_f}_r(\mathcal{F})$.}} 
\end{Def}

Definition \ref{ordem} gives rise to ordering in the traditional
way. Lemma \ref{val} shows that Definition \ref{ordem} is
intrinsic, i.e., it does not depend on the representative. The
following lemma is easily shown and should be well known.
\begin{Lem}\label{hum} {\tmsamp{Let $(A,\le)$ be a commutative unitary
  partially ordered ring and $a,b\in A$. Then:
  \begin{enumerate}
  \item[$(i)$] $(A,\le)$ is totally ordered if and only if either $a\ge
    0$ or $-a\ge 0$.
\item[$(ii)$] If $\ge$ is a total order on $A$, the nil-radical of
  $A$, $\mathcal{N}(A)$, is zero, i.e., $\mathcal{N}(A)=0$ 
and if $a,b\ge 0\Rightarrow ab\ge 0$,
then $A$ is an integral domain.
  \end{enumerate}}}
\end{Lem}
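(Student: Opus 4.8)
The plan is to prove the two parts of Lemma \ref{hum} directly from the ring axioms and the basic order properties, since both are essentially standard facts about partially ordered commutative rings.

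First, for part $(i)$, I would argue both implications. If $(A,\le)$ is totally ordered then for any $a\in A$ the elements $a$ and $0$ are comparable, so either $a\ge 0$ or $a\le 0$; in the latter case $0-a=-a\ge 0$, which gives the stated dichotomy. Conversely, suppose that for every $a\in A$ either $a\ge 0$ or $-a\ge 0$. Given arbitrary $x,y\in A$, apply the hypothesis to $a:=x-y$: either $x-y\ge 0$, which by Definition \ref{base-1} (transported to the general setting of $A$) means $y\le x$, or $-(x-y)=y-x\ge 0$, which means $x\le y$. Hence any two elements are comparable and $\le$ is a total order. Here I am only using that $\le$ is defined by $u\le v\Leftrightarrow v-u\ge 0$ together with translation-compatibility of the order, which is part of the hypothesis that $(A,\le)$ is a partially ordered ring.

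For part $(ii)$, assume $\ge$ is a total order on $A$ and that $a,b\ge 0$ implies $ab\ge 0$. I first show $\mathcal{N}(A)=0$. Let $a\in\mathcal{N}(A)$, say $a^n=0$ with $n$ minimal. By part $(i)$ either $a\ge 0$ or $-a\ge 0$. If $a\ge 0$, then by the multiplicativity hypothesis $a^2\ge 0$, and inductively $a^k\ge 0$ for all $k$; in particular $a^{n-1}\ge 0$ and $a\cdot a^{n-1}=a^n=0$. If instead $-a\ge 0$, replace $a$ by $-a$ (note $(-a)^n=\pm a^n=0$) and argue the same way. Either way it suffices to rule out the existence of a nonzero element $c\ge 0$ with a power equal to zero. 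Suppose $c\ge 0$, $c\ne 0$, and let $m$ be minimal with $c^m=0$, so $m\ge 2$ and $c^{m-1}\ne 0$. Then $c^{m-1}\ge 0$ and $c^{m-1}\ne 0$, while $c\cdot c^{m-1}=0$; but in a totally ordered ring a product of two elements that are each $\ge 0$ is $\ge 0$, and more importantly the order must be compatible with the ring structure in the sense that $u>0$ and $v>0$ force $uv>0$ — this is the key point and I expect it to be the main obstacle. To make this rigorous I would use that $c\ge 0$ and $c\ne 0$ give $c>0$, similarly $c^{m-1}>0$, and then invoke that in a partially ordered ring with the stated multiplicative property, if $c^{m-1}c=0$ then neither factor can be $>0$ simultaneously unless one vanishes — i.e.\ I need the implication ``$u,v>0\Rightarrow uv>0$'', which follows from ``$u,v\ge0\Rightarrow uv\ge0$'' together with the fact that in a totally ordered ring the nonzero $\ge0$ elements are closed under nonzero products because $uv=0$ with $u>0$ would force, on multiplying a putative order relation, a contradiction with antisymmetry. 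The cleanest route is: the set $P=\{x:x\ge 0\}$ satisfies $P+P\subseteq P$, $P\cap(-P)=\{0\}$, $P\cup(-P)=A$, and $PP\subseteq P$; from these one shows directly that $P\setminus\{0\}$ is multiplicatively closed, hence $0\notin P\setminus\{0\}\cdot(P\setminus\{0\})$, contradicting $c\cdot c^{m-1}=0$. This yields $\mathcal{N}(A)=0$.

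Finally, still under the hypothesis of $(ii)$, I show $A$ is an integral domain. Let $xy=0$ in $A$. Writing $x=x^+-|x^-|$ type decompositions is not available in this abstract setting, so instead I use part $(i)$: replacing $x$ by $-x$ and/or $y$ by $-y$ as needed (which does not affect whether $xy=0$), I may assume $x\ge 0$ and $y\ge 0$. Then $x,y\in P$, and if both were nonzero we would have $xy\in P\setminus\{0\}$ by the multiplicative closure of $P\setminus\{0\}$ established above, contradicting $xy=0$. Hence $x=0$ or $y=0$, so $A$ has no zero divisors; since $A$ is also commutative with $1$, it is an integral domain. I would remark that the hypothesis ``$a,b\ge 0\Rightarrow ab\ge 0$'' is exactly what is needed to run this argument and is not automatic from a total order on an abstract commutative ring, which is why it is listed as a separate assumption.
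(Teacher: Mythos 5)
Your part (i) is fine. The trouble is in part (ii): you read the statement as asserting that the total order by itself forces $\mathcal{N}(A)=0$, and your argument for this rests on the claim that in a totally ordered ring whose positive cone $P$ satisfies $P+P\subseteq P$, $P\cap(-P)=\{0\}$, $P\cup(-P)=A$ and $PP\subseteq P$, the set $P\setminus\{0\}$ is multiplicatively closed (equivalently ``$u,v>0\Rightarrow uv>0$''). That claim is false, and at the decisive moment you only assert that a contradiction ``follows'' without producing it. Counterexample: $A=\Rset[x]/(x^2)$ with the lexicographic order, $a+bx\ge 0$ iff $a>0$ or ($a=0$ and $b\ge 0$). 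This is a translation-invariant total order with $PP\subseteq P$, yet $x>0$ and $x\cdot x=0$; so $A$ has a nonzero nilpotent and zero divisors. Hence neither ``total order $\Rightarrow\mathcal{N}(A)=0$'' nor ``total order plus $a,b\ge 0\Rightarrow ab\ge 0$ implies domain'' is provable, and your final integral-domain argument inherits the same gap, since it again invokes the multiplicative closure of $P\setminus\{0\}$.

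The statement is admittedly worded ambiguously, but the intended (and correct) reading takes $\mathcal{N}(A)=0$ as a hypothesis alongside totality and $a,b\ge 0\Rightarrow ab\ge 0$; this is exactly how the lemma is used later in the paper, where the proof that $g_f(\mathcal{F})$ is prime verifies separately that $a^2\in g_f(\mathcal{F})$ implies $a\in g_f(\mathcal{F})$, i.e.\ that the quotient ring is reduced. With that hypothesis the conclusion is short: if $xy=0$, by (i) you may replace $x,y$ by $\pm x,\pm y$ and assume $x,y\ge 0$; by totality say $x\le y$; then $x\ge 0$ and $y-x\ge 0$ give $0\le x^2\le xy=0$, so $x^2=0$ by antisymmetry, and $x=0$ because $\mathcal{N}(A)=0$. (The paper itself omits the proof, calling the lemma well known, so the comparison here is with what a correct proof must contain rather than with a written argument.)
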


We now come to our main result of this sub-section. 
\begin{The}\label{motor} {\tmsamp{Let
  $\mathcal{F}\in\mathit{P}_*(\mathcal{S}_f)$. Then
  $\left(\overline{\Rset}_f/{g_f}_r(\mathcal{F}),\le\right)$ is a
  totally ordered ring.}}
\end{The}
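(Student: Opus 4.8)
The plan is to use the characterization of total order provided by Lemma~\ref{hum}~$(i)$: it suffices to show that for every $\alpha\in\overline{\Rset}_f/{g_f}_r(\mathcal{F})$, either $\alpha\ge 0$ or $-\alpha\ge 0$. Fix such an $\alpha$ and pick any representative $x\in\overline{\Rset}_f$, so that $\alpha=x+{g_f}_r(\mathcal{F})$. By Lemma~\ref{val}~$(ii)$, either $x^+\in{g_f}_r(\mathcal{F})$ or $x^-\in{g_f}_r(\mathcal{F})$. I would treat these two cases separately.

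If $x^-\in{g_f}_r(\mathcal{F})$, then by Definition~\ref{ordem} we get $\alpha\ge 0$ directly, since $x$ is a representative of $\alpha$ whose negative part lies in the ideal. Suppose instead that $x^+\in{g_f}_r(\mathcal{F})$. The natural move is to consider $-x$, which represents $-\alpha$, and recall from Proposition~\ref{facil}~$(iii)$ that $(-x)^-=-(x^+)$. Since ${g_f}_r(\mathcal{F})$ is an ideal and $x^+\in{g_f}_r(\mathcal{F})$, we have $-(x^+)=(-x)^-\in{g_f}_r(\mathcal{F})$, which by Definition~\ref{ordem} gives $-\alpha\ge 0$. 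Hence in either case the dichotomy of Lemma~\ref{hum}~$(i)$ is satisfied.

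It remains to check the hypotheses needed to invoke Lemma~\ref{hum}~$(i)$, namely that $\left(\overline{\Rset}_f/{g_f}_r(\mathcal{F}),\le\right)$ is indeed a commutative unitary partially ordered ring with the relation $\le$ induced by Definition~\ref{ordem}. Commutativity and the existence of a unit are inherited from $\overline{\Rset}_f$ (and ${g_f}_r(\mathcal{F})$ is a proper ideal by Lemma~\ref{idpro}, so the quotient is nonzero). That Definition~\ref{ordem} gives a well-defined translation-invariant partial order compatible with addition is exactly what the remark following Definition~\ref{ordem} asserts, resting on Lemma~\ref{val}: part $(i)$ shows the notion $\alpha\ge 0$ does not depend on the chosen representative, reflexivity is clear ($0^-=0\in{g_f}_r(\mathcal{F})$), and antisymmetry follows because if both $x^-$ and $(-x)^-=-(x^+)$ lie in the ideal then $x=x^++x^-\in{g_f}_r(\mathcal{F})$, so $\alpha=0$; transitivity and additivity follow from Lemma~\ref{val}~$(i)$ applied appropriately.

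The main obstacle is not any single hard step but rather making sure the order-axiom bookkeeping for the induced relation on the quotient is airtight — in particular verifying well-definedness (Lemma~\ref{val}~$(i)$) and antisymmetry cleanly, and being careful that "partially ordered ring" in the sense of Lemma~\ref{hum} only requires compatibility with addition (not with multiplication), since multiplicative compatibility of $\le$ on $\overline{\Rset}_f$ itself is only the statement that products of q-positive elements are q-positive, as noted in the remark after Definition~\ref{base-1}. Once that is in place, the trichotomy argument via Lemma~\ref{val}~$(ii)$ and Proposition~\ref{facil}~$(iii)$ closes the proof in a couple of lines.
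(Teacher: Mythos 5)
Your argument is correct and follows exactly the route the paper intends: the paper itself omits the proof (deferring to Theorem 3.14 of \cite{AJOS}), but the tools it sets up immediately beforehand --- Lemma \ref{val}~$(ii)$ for the dichotomy $x^+\in{g_f}_r(\mathcal{F})$ or $x^-\in{g_f}_r(\mathcal{F})$, Proposition \ref{facil}~$(iii)$, and Lemma \ref{hum}~$(i)$ --- are precisely the ones you combine. The routine order-axiom checks you sketch (well-definedness via Lemma \ref{val}~$(i)$, antisymmetry via $x=x^++x^-$, additivity via passing to q-positive representatives $x^+=x-x^-$) are all recoverable, so there is no genuine gap.
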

\begin{proof} The proof is similar that one presented in \cite{AJOS}
  (See Theorem $3.14$ pg-$8$).
\end{proof}

\subsection{Other algebraic properties of  $\overline{\Kset}_f$}\label{subsec-042}
We reserve this sub-section to give continuity to the algebraic facts
studied in the Section \ref{sec-03} where the maximal ideals of
$\overline{\Kset}_f$ were completely described. Now we completely
describe the minimal primes and show that $\overline{\Kset}_f$ is not
Von Neumann regular.

\begin{Not} {\tmsamp{If $A$ is a commutative unitary ring, we denote by
  $\mathcal{B}(A)$ the set of idempotents of $A$. Our first result
  describes completely the idempotents of $\overline{\Kset}_f$.}}
\end{Not}

\begin{The} \label{idemp} {\tmsamp{Let $e\in\overline{\Kset}_f$ be a non-trivial
  idempotente. Then there exists $S\in\mathcal{S}_f$ such that
  $e=\mathcal{X}_S$. In particular, we have that
  $\mathcal{B}(\overline{\Kset}_f)$ is a discrete subset of
  $\overline{\Kset}_f$.}}
\end{The}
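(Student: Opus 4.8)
The goal is to show that every non-trivial idempotent $e \in \overline{\Kset}_f$ is of the form $\mathcal{X}_S$ for some $S \in \mathcal{S}_f$. I would start by fixing a representative $\hat{e}$ of $e$ and exploiting the relation $\hat{e}^2 - \hat{e} \in \mathcal{N}_f(\Kset)$. The idea is that, at the level of a "good" representative, $\hat{e}(\varphi)$ should be forced to be close to $0$ or close to $1$; I then replace $\hat{e}$ by the honest characteristic function of the set where $\hat{e}(\varphi)$ is close to $1$ and check that this change is null.

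\smallskip

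\textbf{Step 1: Locate the zero/one dichotomy.} Since $\hat e(\hat e-1)\in\mathcal N_f(\Kset)$, by Lemma~\ref{antes} applied to the set $A=\{\varphi\mid \hat e(\varphi)(\hat e(\varphi)-1)=0\}$ — or rather by unwinding the definition $(N)$ directly — there are $p\in\Nset$ and $\gamma\in\Gamma$ such that for every $q\ge p$ and every $\varphi\in\mathcal A_q(\Kset)$ one has $|\hat e(\varphi_\varepsilon)||\hat e(\varphi_\varepsilon)-1|\le C_\varphi\varepsilon^{\gamma(q)-p}$ for $\varepsilon$ small. Choosing $q$ with $\gamma(q)-p>0$, this means that for $\varepsilon$ small $\hat e(\varphi_\varepsilon)$ is within $o(1)$ of the set $\{0,1\}$, so the two sets $\{\varepsilon\mid |\hat e(\varphi_\varepsilon)|<\tfrac12\}$ and $\{\varepsilon\mid |\hat e(\varphi_\varepsilon)-1|<\tfrac12\}$ cover a punctured neighborhood of $0$ and are disjoint there.

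\smallskip

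\textbf{Step 2: Define $S$ and compare with $e$.} Set $S:=\{\varphi\in\mathcal A_0(\Kset)\mid |\hat e(\varphi)-1|<\tfrac12\}$ and consider $\hat e-\hat{\mathcal X}_S$. On a representative level, $(\hat e-\hat{\mathcal X}_S)(\varphi_\varepsilon)$ equals either $\hat e(\varphi_\varepsilon)$ (when $\varphi_\varepsilon\notin S$, so $|\hat e(\varphi_\varepsilon)|<\tfrac12$ and then in fact $|\hat e(\varphi_\varepsilon)|\le C_\varphi'\varepsilon^{\gamma(q)-p}$ by Step~1, since the other factor $|\hat e-1|$ is then $\ge\tfrac12$) or $\hat e(\varphi_\varepsilon)-1$ (when $\varphi_\varepsilon\in S$, bounded the same way). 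Hence $\hat e-\hat{\mathcal X}_S$ satisfies $(N)$, i.e. $e=\mathcal X_S$ in $\overline{\Kset}_f$.

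\smallskip

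\textbf{Step 3: $S\in\mathcal S_f$.} Since $e$ is non-trivial, $\mathcal X_S\notin\{0,1\}$, so neither $\hat{\mathcal X}_S$ nor $1-\hat{\mathcal X}_S=\hat{\mathcal X}_{S^c}$ lies in $\mathcal N_f(\Kset)$. By Lemma~\ref{vert} (taking $x=0$, or directly by Lemma~\ref{antes}), $\hat{\mathcal X}_{S^c}\notin\mathcal N_f(\Kset)$ means there is $p$ such that for every $p'\ge p$ there is $\varphi\in\mathcal A_{p'}(\Kset)$ with $0\in\overline{\{\varepsilon\mid\varphi_\varepsilon\in S\}}$, and symmetrically $\hat{\mathcal X}_S\notin\mathcal N_f(\Kset)$ gives $0\in\overline{\{\varepsilon\mid\varphi_\varepsilon\in S^c\}}$. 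One must check these two conditions can be met by a \emph{single} $\varphi$ at each level $p'$: this is where I expect the only real friction, and it follows from Step~1, because for that $\varphi$ the set $\{\varepsilon\mid\varphi_\varepsilon\in S\}$ and its complement partition a punctured neighborhood of $0$, and if $0$ failed to be in the closure of one of them, then $\hat{\mathcal X}_S$ or $\hat{\mathcal X}_{S^c}$ would be null after all, contradicting non-triviality of $e$. Thus $\{\varepsilon\mid\varphi_\varepsilon\in S\}\in\mathcal S$ for a cofinal family of $\varphi$, i.e. $S\in\mathcal S_f$. Finally, discreteness of $\mathcal B(\overline{\Kset}_f)$: for two non-trivial idempotents $\mathcal X_S\ne\mathcal X_T$, their difference is again (up to sign on each piece) a characteristic-type function that is not null, so by the computation in Proposition~\ref{cara}$(ii)$ we get $\|\mathcal X_S-\mathcal X_T\|=1$; together with $\|0\|=0$, $\|1\|=1$ being isolated values this shows $\mathcal B(\overline{\Kset}_f)$ is discrete. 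The main obstacle, as noted, is the simultaneity in Step~3 — making the two density conditions hold for one test function per level — and it is resolved precisely by the uniform zero/one estimate extracted in Step~1.
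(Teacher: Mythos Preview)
Your Steps~1--2 are essentially the paper's argument with a cosmetic simplification: the paper takes $S=(N_a(\hat e))^c=\{\varphi:|\hat e(\varphi)|\ge\dot\alpha_a(\varphi)\}$, whereas you use the level set at height $\tfrac12$. Both choices allow the same splitting, and your constant threshold lets you invoke the null bound on $\hat e(\hat e-1)$ symmetrically on $S$ and on $S^c$, obtaining an $\varepsilon^{\gamma(q)-p}$ estimate on both pieces.

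The genuine gap is Step~3, and you correctly flag it as the sticking point. From $\mathcal X_S\notin\{0,1\}$ you get $\hat{\mathcal X}_S\notin\mathcal N_f$ and $\hat{\mathcal X}_{S^c}\notin\mathcal N_f$; this yields, at each level $p$, \emph{some} $\varphi$ with $0\in\overline{\{\varepsilon:\varphi_\varepsilon\in S\}}$ and \emph{some} $\psi$ with $0\in\overline{\{\varepsilon:\psi_\varepsilon\in S^c\}}$. Your resolution for taking $\varphi=\psi$ does not work: you argue that if $0\notin\overline{\{\varepsilon:\varphi_\varepsilon\in S^c\}}$ then ``$\hat{\mathcal X}_{S^c}$ would be null'', but nullity requires this for \emph{all} test functions in $\mathcal A_q$, not for the single one you selected. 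Nothing in Step~1 rules out the scenario in which every $\varphi$ has either $\varphi_\varepsilon\in S$ for all small $\varepsilon$ or $\varphi_\varepsilon\in S^c$ for all small $\varepsilon$, with both types present at every level; then $\mathcal X_S\notin\{0,1\}$ yet $S\notin\mathcal S_f$. The paper avoids this precisely through its choice of $S$: since a non-trivial idempotent is a non-zero non-unit, Proposition~\ref{creio} supplies an $a$ with $N_a(\hat e)\in\mathcal S_f$, and then $S=(N_a(\hat e))^c\in\mathcal S_f$ by Proposition~\ref{caos}(i). That is what the $N_a$-threshold buys; your constant threshold gives no such handle, so to complete your argument you would still need to appeal to Proposition~\ref{creio} (or reprove it) after Step~2.
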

\begin{proof} Let $\hat{e}=e(\varphi)$ be a representative any of
$e$. Then since $e^2=e$, it follows that $e(1-e)=0$, i.e.,
$\hat{e}(1-\hat{e})\in\mathcal{N}_f(\Kset)$. Hence there exists $p\in\Nset,
~\gamma\in\Gamma$ such that for all $q\ge p$ and for all
$\varphi\in\mathcal{A}_q(\Kset)$ there exists $C=C(\varphi)>0,
~\eta=\eta(\varphi)>0$ such that
\begin{equation}\label{null-1}
|\hat{e}(\varphi_\varepsilon)(1-\hat{e}(\varphi_\varepsilon))|\le
C\varepsilon^{\gamma(q)-p}, ~\forall~ 0<\varepsilon<\eta.
\end{equation}
Let $S=(N_a(\hat{e}))^c=\{\varphi\in\mathcal{A}_0(\Kset)||e(\varphi)|\ge
\dot{\alpha}_a(\varphi)\}$ and
$\hat{u}=\hat{e}-\hat{\mathcal{X}}_S$. Then we will go show that
$\hat{u}\in\mathcal{N}_f(\Kset)$. Indeed, if
$\hat{u}=\hat{e}-\hat{\mathcal{X}}_S$, then for all
$0<\varepsilon<\eta$, we have that 
$$
|\hat{u}(\varphi_\varepsilon)|=\left\{\begin{array}{ll}
|1-\hat{e}(\varphi_\varepsilon)|, & \mbox{se $\varphi_\varepsilon\in S$}\\
|\hat{e}(\varphi_\varepsilon)|, & \mbox{se $\varphi_\varepsilon\in S^c$}.
\end{array}\right.
$$
Indeed, if $\varphi_\varepsilon\in S$, then
$|\hat{e}(\varphi_\varepsilon)|\ge\dot{\alpha}_a(\varphi_\varepsilon)
=(i(\varphi))^a\varepsilon^a$, hence 
\begin{equation}\label{null-2}
\frac{1}{|\hat{e}(\varphi_\varepsilon)|}\le
  (i(\varphi))^{-a}\varepsilon^{-a}, ~\forall~ 0<\varepsilon<\eta,
\end{equation}
observing that
  $|\hat{e}(\varphi_\varepsilon)|\ne 0$. Therefore $e$ is a
  non-trivial idempotente. Thus, we have that
  \begin{eqnarray*}
    |\hat{u}(\varphi_\varepsilon)|&=&|1-\hat{e}(\varphi_\varepsilon)|\\
&=&\frac{1}{|\hat{e}(\varphi_\varepsilon)|}|\hat{e}(\varphi_\varepsilon)||1-\hat{e}(\varphi_\varepsilon)|\\
&\le&\frac{1}{|\hat{e}(\varphi_\varepsilon)|}|\hat{e}(\varphi_\varepsilon)(1-\hat{e}(\varphi_\varepsilon))|\qquad(\mbox{for
  (\ref{null-1}) and (\ref{null-2}), we have that})\\
&\le&((i(\varphi))^{-a}\varepsilon^{-a})(C\varepsilon^{\gamma(q)-p}),
~\forall ~0<\varepsilon<\eta\\
&=&C(i(\varphi))^{-a}\varepsilon^{\gamma(q)-(p+a)}, ~\forall ~0<\varepsilon<\eta.
\end{eqnarray*}
Hence, 
\begin{equation}\label{resul-1}
|\hat{u}(\varphi_\varepsilon)|\le
K\varepsilon^{\gamma(q)-P}, ~\forall~ 0<\varepsilon<\eta,
\end{equation}
where $K=C(i(\varphi))^{-a}> 0$ and $P=p+a$. Now, if $\varphi_\varepsilon\in
S^c=N_a(\hat{e}), ~\forall ~0<\varepsilon<\eta$, then
\begin{equation}\label{null-3}
|\hat{e}(\varphi_\varepsilon)|<\dot{\alpha}_a(\varphi_\varepsilon)=(i(\varphi))^a\varepsilon^a\Rightarrow
|\hat{e}(\varphi_\varepsilon)|<(i(\varphi))^a\varepsilon^a, ~\forall~ 0<\varepsilon<\eta. 
\end{equation}
However, in this case, we have that
$|\hat{u}(\varphi_\varepsilon)|=|\hat{e}(\varphi_\varepsilon)|$ and from
(\ref{null-3}), it follows that 
\begin{equation}\label{resul-2}
|\hat{u}(\varphi_\varepsilon)|\le(i(\varphi))^a\varepsilon^a, ~\forall~ 0<\varepsilon<\eta.
\end{equation}
Hence, from (\ref{resul-1}) and (\ref{resul-2}) we have that
$\hat{u}\in\mathcal{N}_f(\Kset)$ hence,
$\hat{e}=\hat{\mathcal{X}}_S$, i.e., $e=\mathcal{X}_S$. In
particular, we have that $\mathcal{B}(\overline{\Kset}_f)$ 
is a discrete subset of $\overline{\Kset}_f$. 
\end{proof}

\begin{Def}\label{von} {\tmsamp{A unitary ring is said to be Von
Neumann regular if all its principal ideals are generated by 
an idempotent.}}
\end{Def}

For our purposes the following is sufficient:
\begin{Pro}\label{nilo} {\tmsamp{Let $A$ be a commutative unitary 
ring and let $\mathcal{N}(A)$ be its nil radical. Then every prime 
ideal of $A$ is maximal iff $A/\mathcal{N}(A)$ is Von Neumann regular.}}
\end{Pro}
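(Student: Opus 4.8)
The plan is to reduce at once to the reduced case and then run a localisation argument. First I would note that every prime ideal of $A$ contains $\mathcal{N}(A)$, so the projection $\pi\colon A\to A/\mathcal{N}(A)$ induces an inclusion-preserving bijection $\mathfrak{p}\mapsto\mathfrak{p}/\mathcal{N}(A)$ between the prime ideals of $A$ and those of $A/\mathcal{N}(A)$, carrying maximal ideals to maximal ideals. Hence ``every prime ideal of $A$ is maximal'' is equivalent to ``every prime ideal of $A/\mathcal{N}(A)$ is maximal'', and since $A/\mathcal{N}(A)$ is reduced the proposition reduces to the following statement: \emph{for a reduced commutative unitary ring $B$, every prime ideal of $B$ is maximal if and only if $B$ is Von Neumann regular.} I would also record the usual reformulation of Definition \ref{von}: $B$ is Von Neumann regular if and only if for every $a\in B$ there is $x\in B$ with $a=a^{2}x$; indeed, if $(a)=(e)$ with $e^{2}=e$, writing $a=eb$ and $e=ac$ one gets $a=ea=a^{2}c$, while conversely if $a=a^{2}x$ then $e:=ax$ is idempotent and $(a)=(e)$.

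For the ``if'' implication, assume $B$ is Von Neumann regular and let $\mathfrak{p}$ be a prime ideal. In the integral domain $B/\mathfrak{p}$ every nonzero class $\bar a$ satisfies $\bar a=\bar a^{2}\bar x$, whence $\bar a(1-\bar a\bar x)=0$ and therefore $\bar a\bar x=1$; thus $B/\mathfrak{p}$ is a field and $\mathfrak{p}$ is maximal. (Incidentally this shows a Von Neumann regular ring is automatically reduced, consistent with the reduction.)

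For the ``only if'' implication, assume $B$ is reduced and every prime of $B$ is maximal. Fix a prime $\mathfrak{p}$. Its localisation $B_{\mathfrak{p}}$ has a single prime ideal $\mathfrak{p}B_{\mathfrak{p}}$, so $\mathcal{N}(B_{\mathfrak{p}})=\mathfrak{p}B_{\mathfrak{p}}$; but $B_{\mathfrak{p}}$ is reduced, being a localisation of a reduced ring, so $\mathfrak{p}B_{\mathfrak{p}}=0$ and $B_{\mathfrak{p}}$ is a field. Now fix $a\in B$ and let $\bar a$ denote the image of $a$ in the $B$-module $M:=B/(a^{2})$. For each prime $\mathfrak{p}$ the element $a/1$ of the field $B_{\mathfrak{p}}$ is either $0$ or a unit: in the first case $a/1\in(a^{2})B_{\mathfrak{p}}$ trivially, in the second $(a^{2})B_{\mathfrak{p}}=B_{\mathfrak{p}}\ni a/1$. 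Hence $\bar a$ has zero image in $M_{\mathfrak{p}}$ for every prime $\mathfrak{p}$, so $\Ann_{B}(\bar a)$ lies in no maximal ideal, i.e. $\Ann_{B}(\bar a)=B$ and $\bar a=0$. Therefore $a\in(a^{2})$, say $a=a^{2}x$, and by the reformulation above $B$ is Von Neumann regular, completing the proof.

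The spectral correspondence and the idempotent/$a^{2}x$ dictionary are routine. The point that deserves care, since the ring in question is very far from Noetherian, is the passage ``zero-dimensional and reduced $\Rightarrow$ local rings are fields'' (no appeal to Artinian rings is available here) together with the accompanying local--global vanishing of the module element $\bar a$; I would spell those two steps out in full.
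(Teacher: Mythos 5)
Your proof is correct, and in fact it supplies something the paper does not: Proposition \ref{nilo} is stated there without any proof (it is invoked as a known sufficient fact before Lemma \ref{fator}), so there is no argument of the authors to compare yours against. Your route is the standard one and every step checks out: the bijection between primes of $A$ and primes of $A/\mathcal{N}(A)$ legitimately reduces to the reduced case; the dictionary between ``principal ideals generated by idempotents'' (the paper's Definition \ref{von}) and the equation $a=a^{2}x$ is verified in both directions; the ``if'' direction via $\bar a(1-\bar a\bar x)=0$ in the domain $B/\mathfrak{p}$ is fine; and in the ``only if'' direction you correctly use that all primes being maximal makes $\mathfrak{p}B_{\mathfrak{p}}$ the unique prime of $B_{\mathfrak{p}}$, that localisation preserves reducedness so $B_{\mathfrak{p}}$ is a field without any Artinian or Noetherian input, and that the vanishing of the class of $a$ in $B/(a^{2})$ globalises because its annihilator meets the complement of every prime. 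The only cosmetic remark is that the last step could be phrased without modules (for each prime choose $s_{\mathfrak{p}}\notin\mathfrak{p}$ with $s_{\mathfrak{p}}a\in(a^{2})$, and note these $s_{\mathfrak{p}}$ generate the unit ideal), but that is a matter of taste, not a gap.
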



\begin{Lem}\label{fator} {\tmsamp{Let $\gamma:\mathcal{A}_0(\Kset)\to
  \Rset\cup\{+\infty\}$ be defined as follows: 
\begin{displaymath}
\gamma(\varphi)=\left\{\begin{array}{ll}
+\infty, &\mbox{if} ~(i(\varphi))^{-1}\notin \Nset\\
p, &\mbox{if} ~(i(\varphi))^{-1}\in\Nset,
\end{array}\right.
\end{displaymath}
where $p$ is the smallest prime dividing $(i(\varphi))^{-1}$. Let
$x\in\overline{\Kset}_f$ such that
$\hat{x}(\varphi)=\hat{\dot{\alpha}}_{\gamma(\varphi)}(\varphi)$ is a 
representative. Then the principal ideal generated by $x$ is not an 
idempotent ideal.}}
\end{Lem}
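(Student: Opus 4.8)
The plan is to argue by contradiction: assume the principal ideal $\langle x\rangle$ is idempotent, so $\langle x\rangle = \langle x\rangle^2 = \langle x^2\rangle$, which forces $x = u x^2$ for some $u\in\overline{\Kset}_f$, i.e.\ $x(1-ux)=0$ in $\overline{\Kset}_f$. First I would translate this into a statement about a representative $\hat{x}$ with $\hat{x}(\varphi)=\hat{\dot\alpha}_{\gamma(\varphi)}(\varphi)$ and a representative $\hat{u}$ of $u$: the function $\varphi\mapsto \hat{x}(\varphi)\big(1-\hat{u}(\varphi)\hat{x}(\varphi)\big)$ lies in $\mathcal{N}_f(\Kset)$, so there exist $p\in\Nset$ and $\gamma_0\in\Gamma$ with, for every $q\ge p$ and every $\varphi\in\mathcal{A}_q(\Kset)$, constants $C=C_\varphi>0$, $\eta=\eta_\varphi>0$ such that $\bigl|\hat{x}(\varphi_\varepsilon)(1-\hat{u}(\varphi_\varepsilon)\hat{x}(\varphi_\varepsilon))\bigr|\le C\varepsilon^{\gamma_0(q)-p}$ for $0<\varepsilon<\eta$.

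Next I would exploit the structure of $\gamma(\varphi)$. Fix $\varphi\in\mathcal{A}_q(\Kset)$; then $i(\varphi_\varepsilon)=\varepsilon\,i(\varphi)$, so as $\varepsilon$ ranges over $]0,\eta[$ the value $(i(\varphi_\varepsilon))^{-1}=\varepsilon^{-1}i(\varphi)^{-1}$ takes every sufficiently large real value, and in particular lands in $\Nset$ along a sequence $\varepsilon_n\downarrow 0$ with $(i(\varphi_{\varepsilon_n}))^{-1}=n$ (for $n$ large), whence $\gamma(\varphi_{\varepsilon_n})=p_n$, the smallest prime divisor of $n$. Choosing $n$ to be a large prime itself gives $\gamma(\varphi_{\varepsilon_n})=n\to\infty$; choosing $n$ a large power of $2$ gives $\gamma(\varphi_{\varepsilon_n})=2$. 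I would use a sequence of the first kind to show $x$ is \emph{not} a unit: along primes $n$, $\hat{x}(\varphi_{\varepsilon_n}) = (i(\varphi_{\varepsilon_n}))^{n} = n^{-n}$, which decays faster than any power of $\varepsilon_n = i(\varphi)^{-1}n^{-1}$, violating the moderateness of $1/\hat{x}$ as in Theorem \ref{mosca} — so $x\notin\Inv(\overline{\Kset}_f)$, hence $\hat{u}(\varphi_\varepsilon)\hat{x}(\varphi_\varepsilon)$ cannot be bounded away from, say, $1$; more precisely $1-\hat{u}\hat{x}\notin\mathcal{N}_f$. Then I would turn to the idempotency relation itself: since $x(1-ux)=0$ and $x\ne 0$, the element $e:=\lim_n (ux)$-type reasoning, or directly Theorem \ref{idemp}, tells us that if $\langle x\rangle$ were idempotent then $ux$ would be an idempotent $e=\mathcal{X}_S$ with $S\in\mathcal{S}_f$ and $x=ex=x\mathcal{X}_S$; combined with $x(1-\mathcal{X}_S)=0$ this says $\hat{x}$ vanishes (mod $\mathcal{N}_f$) off $S$, while on $S$ we need $\hat{u}\hat{x}\equiv 1$, i.e.\ $|\hat{x}(\varphi_\varepsilon)|\ge \dot\alpha_a(\varphi_\varepsilon)$ for $\varphi_\varepsilon\in S$ and suitable $a$ (again Theorem \ref{mosca} applied to $x$ restricted to $S$).

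The contradiction then comes from evaluating $\hat{x}(\varphi_\varepsilon)=(i(\varphi_\varepsilon))^{\gamma(\varphi_\varepsilon)}$ along the two kinds of sequences inside $S$. Because $S\in\mathcal{S}_f$, for each $p$ there is $\varphi\in\mathcal{A}_p(\Kset)$ with $\{\varepsilon\mid\varphi_\varepsilon\in S\}\in\mathcal{S}$, so $0$ is an accumulation point of that set; within it I can still find both a subsequence with $(i(\varphi_{\varepsilon_n}))^{-1}$ a large prime $n$ (giving $|\hat{x}(\varphi_{\varepsilon_n})| = n^{-n}$, far below $\dot\alpha_a(\varphi_{\varepsilon_n})\sim \varepsilon_n^{a}$) and a subsequence with $(i(\varphi_{\varepsilon_m}))^{-1}$ a power of $2$ (giving $|\hat{x}(\varphi_{\varepsilon_m})|=(i(\varphi_{\varepsilon_m}))^{2}\sim\varepsilon_m^{2}$, which is \emph{comparable} to a power of $\varepsilon$ but with exponent independent of $m$). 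The prime subsequence contradicts the lower bound $|\hat{x}|\ge\dot\alpha_a$ needed for invertibility on $S$, so $\langle x\rangle$ cannot be idempotent. The main obstacle I expect is the bookkeeping in step two — guaranteeing that the desired arithmetic subsequences ($n$ prime, $n$ a power of $2$) actually occur \emph{inside} the set $\{\varepsilon\mid\varphi_\varepsilon\in S\}$ rather than merely in $]0,1]$; this should follow because that set has $0$ in its closure and we have freedom in choosing $\varphi\in\mathcal{A}_p(\Kset)$, but it must be handled carefully, possibly by first replacing $S$ by $N_a(\hat{x})$ as in Proposition \ref{creio} and noting that the value of $\hat{x}$ alone (not the test function) decides membership.
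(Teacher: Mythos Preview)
Your overall framework matches the paper's: reduce (via $e=ux$ idempotent and Theorem~\ref{idemp}) to $\langle x\rangle=\langle\mathcal{X}_S\rangle$ for some $S\in\mathcal{S}_f$, so that $x=x\mathcal{X}_S$ and $\mathcal{X}_S=yx$ for some $y$, and then seek a contradiction by evaluating $\hat x$ along sequences $\varphi_{\varepsilon_n}$ with prescribed $\gamma(\varphi_{\varepsilon_n})$. The gap is exactly where you flagged it, and it is a genuine one, not mere bookkeeping: you cannot guarantee that your ``prime'' subsequence lands inside $S$. Nothing rules out, say, $S=\{\varphi:\gamma(\varphi)=2\}$; for such $S$ every $\varphi\in S$ has $\gamma(\varphi)=2$, and your super-polynomial-decay argument on $S$ collapses entirely. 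Your proposed fixes do not help: choosing a different $\varphi\in\mathcal{A}_p$ only changes \emph{which} $\varepsilon$'s satisfy $\varphi_\varepsilon\in S$, not the range of $\gamma$ on $S$; and $S$ is forced by the idempotent generator, so it cannot be swapped for $N_a(\hat x)$.

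The paper closes this gap by a dichotomy on the image $\gamma(S)\subset\Nset\cup\{+\infty\}$. If $\gamma(S)\cap\Rset$ is finite, fix a prime $p>\max(\gamma(S)\cap\Rset)$ and (for $\varphi$ with $i(\varphi)=1$) set $\varepsilon_n:=p^{-n}$; then $(i(\varphi_{\varepsilon_n}))^{-1}=p^{n}$, so $\gamma(\varphi_{\varepsilon_n})=p\notin\gamma(S)$, hence $\varphi_{\varepsilon_n}\in S^c$, while $\hat x(\varphi_{\varepsilon_n})=\varepsilon_n^{\,p}$ is only polynomially small. This shows $x\mathcal{X}_{S^c}\ne 0$, contradicting $x=x\mathcal{X}_S$. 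If instead $\gamma(S)\cap\Rset$ is infinite, one can pick $\varphi_{\varepsilon_n}\in S$ with $\gamma(\varphi_{\varepsilon_n})\nearrow\infty$; now from $\mathcal{X}_S=yx$ one gets $1-\hat y(\varphi_{\varepsilon_n})\hat x(\varphi_{\varepsilon_n})\to 0$ while $|\hat x(\varphi_{\varepsilon_n})|=(i(\varphi_{\varepsilon_n}))^{\gamma(\varphi_{\varepsilon_n})}$ decays faster than any fixed power of $\varepsilon_n$, forcing $|\hat y(\varphi_{\varepsilon_n})|$ to grow faster than any $\varepsilon_n^{-N}$ and contradicting moderateness of $y$. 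That case split is the missing idea in your proposal; once you insert it, the rest of your argument goes through essentially as written.
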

\begin{proof} Suppose that $\mathfrak{J}=x\overline{\Kset}_f$ is an
ideal idempotente. Then from Theorem \ref{idemp} there exists 
$S\in\mathcal{S}_f$ such that
$\mathfrak{J}=\mathcal{X}_S\overline{\Kset}_f$. From the definition
of $\gamma$ we have two cases two to analyze:
\begin{enumerate}
\item[$(a)$] $\gamma(S)$ is finite;
\item[$(b)$] $\gamma(S)$ is infinite.
\end{enumerate}

$(a)$ Let $\gamma(S)$ be finite, $\sigma:=\max\{z|z\in\Rset\cap\gamma(S)\}$ 
and let us fix a prime number $p>\sigma$. Let also
$\varepsilon_n:=p^{-n}$. Then $\varepsilon_n\underset{n\rightarrow
  \infty}{\longrightarrow} 0$ and
$\gamma(\varphi_{\varepsilon_n})=p\notin\gamma(S), ~\forall~n\in\Nset$
hence, $\varphi_{\varepsilon_n}\notin S$, i.e.,
$\varphi_{\varepsilon_n}\in S^c$. Now, 
$$\hat{x}(\varphi_{\varepsilon_n})=\hat{\dot{\alpha}}_{\gamma(\varphi_{\varepsilon_n})}(\varphi_{\varepsilon_n})=\hat{\dot{\alpha}}_p(\varphi_{\varepsilon_n})=(i(\varphi))^p\varepsilon_n^p>0,
~\forall~ n\in\Nset$$ hence $x\mathcal{X}_{S^c}\ne
0$. Suppose that $x\in\mathfrak{J}=\mathcal{X}_S\overline{\Kset}_f$. Then
$x=y\mathcal{X}_{S}$ for some $y\in \overline{\Kset}_f$. 
From there, we have that 
$0=(y\mathcal{X}_S)\mathcal{X}_{S^c}=x\mathcal{X}_{S^c}\ne 0$ which 
is a contradiction.

$(b)$ Now, if $\gamma(S)$ is infinite, then there exists a sequence
$\{\varepsilon_n\}\subset\{\varepsilon|\varphi_\varepsilon\in S\}$
that converges to zero when $n\rightarrow \infty$ such that
$\{\gamma(\varphi_{\varepsilon_n})\}\subset \Nset$ is an increasing
and strict divergent sequence. Let us assume that $\mathcal{X}_S=yx$. Then
$$\hat{\mathcal{X}}_S(\varphi_\varepsilon)-\hat{y}(\varphi_\varepsilon)\hat{x}(\varphi_\varepsilon)=1-\hat{y}(\varphi_\varepsilon)\hat{x}(\varphi_\varepsilon)\underset{n\rightarrow
  \infty}{\longrightarrow} 0.$$ 
But since $\{\gamma(\varphi_{\varepsilon_n})\}$ is a divergent increasing 
sequence it follows easily that $y$ cannot be a moderate function, that is,
$y\notin\mathcal{E}_f^M(\Kset)$ which is a contradiction.

Of the two contradictions found in the cases $(a)$ and $(b)$ give the result.
\end{proof}

From the above we can affirm that $\overline{\Kset} _f$  is not Von
regular Neumann according to Definition \ref{von}. 
\begin{The}\label{nvon} {\tmsamp{$\overline{\Kset}_f$ is not Von 
Neumann regular. In particular, there exists
  $\mathcal{F}\in\mathit{P}_*(\mathcal{S}_f)$ such that
  $g_f(\mathcal{F})$ is not closed and $\overline{\Kset}_f$ has a
  prime ideal which is not maximal.}}
\end{The}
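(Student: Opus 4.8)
The plan is to deduce Theorem \ref{nvon} from the two preparatory results, namely Proposition \ref{nilo} and Lemma \ref{fator}. First I would recall that the nil-radical $\mathcal{N}(\overline{\Kset}_f)$ is trivial, a fact already noted in Section \ref{sec-02} (indeed $\overline{\Kset}_f$ has no non-zero nilpotent elements). Hence $\overline{\Kset}_f/\mathcal{N}(\overline{\Kset}_f) \cong \overline{\Kset}_f$, so by Proposition \ref{nilo} the ring $\overline{\Kset}_f$ is Von Neumann regular if and only if every prime ideal of $\overline{\Kset}_f$ is maximal.

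Next I would exhibit a principal ideal that is not generated by an idempotent. Take $x\in\overline{\Kset}_f$ with representative $\hat{x}(\varphi)=\hat{\dot{\alpha}}_{\gamma(\varphi)}(\varphi)$, where $\gamma$ is the function of Lemma \ref{fator}. That lemma tells us the principal ideal $x\overline{\Kset}_f$ is not idempotent; in particular it cannot be generated by an idempotent, since an ideal generated by $e=e^2$ is automatically idempotent ($(e\overline{\Kset}_f)^2=e^2\overline{\Kset}_f=e\overline{\Kset}_f$). Thus by Definition \ref{von}, $\overline{\Kset}_f$ is not Von Neumann regular. This establishes the first assertion.

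For the remaining two assertions I would argue as follows. Since $\overline{\Kset}_f$ is not Von Neumann regular, the equivalence from Proposition \ref{nilo} forces the existence of a prime ideal $\mathfrak{p}$ of $\overline{\Kset}_f$ that is not maximal. By Theorem \ref{prime} there is a unique $\mathcal{F}=\mathcal{F}_\mathfrak{p}\in\mathit{P}_*(\mathcal{S}_f)$ with $g_f(\mathcal{F})\subset\mathfrak{p}$. If $g_f(\mathcal{F})$ were closed, then by Lemma \ref{idpro} it is a proper ideal, and I would want to conclude it is maximal — which would contradict non-maximality of the primes lying over it. Concretely, every maximal ideal is of the form $\overline{g_f(\mathcal{F}_\mathfrak{m})}$ by Theorem \ref{impor}(1); so a closed $g_f(\mathcal{F})$ equals $\overline{g_f(\mathcal{F})}$, and one checks that such an ideal is then maximal, whence every prime containing it coincides with it and is maximal, contradicting the conclusion of the previous paragraph. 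Therefore $g_f(\mathcal{F})$ is not closed, and $\mathfrak{p}$ (or any prime strictly between $g_f(\mathcal{F})$ and a maximal ideal) is a non-maximal prime.

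The main obstacle I anticipate is the bridging step in the last paragraph: making precise the claim that a closed ideal of the form $g_f(\mathcal{F})$ must be maximal. The cleanest route is to invoke Theorem \ref{impor}(1) in the direction $(\Leftarrow)$: it asserts that $\overline{g_f(\mathcal{F})}$ is maximal for every $\mathcal{F}\in\mathit{P}_*(\mathcal{S}_f)$, so if $g_f(\mathcal{F})$ happens to be closed it equals its own closure and is thus maximal. Combined with uniqueness in Theorem \ref{prime}, this pins down that a non-maximal prime must properly contain $g_f(\mathcal{F}_\mathfrak{p})$ with the latter necessarily non-closed. Everything else is a routine assembly of results already proved; no new estimates are needed.
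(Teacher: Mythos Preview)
Your argument is correct and follows the paper's approach exactly for the main assertion: trivial nil-radical, Lemma~\ref{fator} to exhibit a principal ideal not generated by an idempotent, and Proposition~\ref{nilo} to conclude non--Von Neumann regularity and the existence of a non-maximal prime. The paper's own proof stops there and leaves the ``In particular'' clauses implicit; your additional paragraph deriving the non-closedness of some $g_f(\mathcal{F})$ via Theorem~\ref{impor}(1) and Theorem~\ref{prime} is a correct and welcome elaboration that the paper does not spell out.
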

\begin{proof} We know from Theorem \ref{rad} that the nil-radical
  of $\overline{\Kset}_f$ is null, that is,
$\mathcal{N}(\overline{\Kset}_f)=\{0\}$. Therefore, from Proposition
\ref{nilo} and Lemma \ref{fator} follows that $\overline{\Kset}_f$ 
is not Von regular Neumann.
\end{proof}

\begin{The} {\tmsamp{For all
      $\mathcal{F}\in\mathit{P}_*(\mathcal{S}_f)$, we
have $g_f(\mathcal{F})$ is a prime ideal.}}
\end{The}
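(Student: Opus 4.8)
The plan is to show that $g_f(\mathcal{F})$ is a proper ideal (which is already Lemma \ref{idpro}) and that the quotient ring is an integral domain, treating the cases $\Kset=\Rset$ and $\Kset=\Cset$ separately and reducing the second to the first.

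For $\Kset=\Rset$ the key input is Theorem \ref{motor}: the quotient $L:=\overline{\Rset}_f/{g_f}_r(\mathcal{F})$ is a totally ordered ring, the order being the one induced from the q-order of $\overline{\Rset}_f$ through the quotient homomorphism (Definition \ref{ordem}). By the remark following Definition \ref{base-1}, the product of two q-positive elements of $\overline{\Rset}_f$ is again q-positive; since the order on $L$ is the image order, the same implication holds in $L$, i.e.\ $a,b\ge 0$ forces $ab\ge 0$ in $L$. Now Lemma \ref{hum}$(ii)$ applies directly: $L$ is a totally ordered ring in which the product of non-negatives is non-negative, hence $L$ is an integral domain (its nil-radical being automatically trivial, in agreement with the fact that $\overline{\Kset}_f$ has no nonzero nilpotents). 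Therefore ${g_f}_r(\mathcal{F})$ is a prime ideal of $\overline{\Rset}_f$, and since for $\Kset=\Rset$ the ideal $g_f(\mathcal{F})$ of $\overline{\Kset}_f$ is precisely ${g_f}_r(\mathcal{F})$, this settles the real case.

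For $\Kset=\Cset$ I would exploit the multiplicativity of the absolute value together with the corollary stating that, for $z\in\overline{\Cset}_f$, one has $z\in g_f(\mathcal{F})$ if and only if $|z|\in{g_f}_r(\mathcal{F})$. Let $z,w\in\overline{\Cset}_f$ with $zw\in g_f(\mathcal{F})$. On representatives $|zw|(\varphi)=|z(\varphi)|\,|w(\varphi)|$, so $|zw|=|z|\,|w|$ in $\overline{\Rset}_f$; that corollary then gives $|z|\,|w|=|zw|\in{g_f}_r(\mathcal{F})$. By the real case just proved, ${g_f}_r(\mathcal{F})$ is prime, so $|z|\in{g_f}_r(\mathcal{F})$ or $|w|\in{g_f}_r(\mathcal{F})$; applying the corollary once more yields $z\in g_f(\mathcal{F})$ or $w\in g_f(\mathcal{F})$. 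Together with Lemma \ref{idpro}, this proves $g_f(\mathcal{F})$ is prime.

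I expect the only delicate point to be the verification that compatibility of the q-order with multiplication genuinely descends to $L$, so that the full Lemma \ref{hum}$(ii)$ (and not merely the weaker Lemma \ref{hum}$(i)$) can be invoked; once this is in place the argument is essentially formal. The remaining items — identifying $g_f(\mathcal{F})$ with ${g_f}_r(\mathcal{F})$ when $\Kset=\Rset$, and checking that the absolute value appearing in the corollary is the same one occurring in $|zw|=|z|\,|w|$ — are routine.
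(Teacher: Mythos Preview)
Your overall strategy coincides with the paper's: treat $\Kset=\Rset$ first via Lemma~\ref{hum}$(ii)$ and Theorem~\ref{motor}, then reduce $\Kset=\Cset$ to the real case through $|zw|=|z|\,|w|$ and the corollary relating $g_f(\mathcal{F})$ to ${g_f}_r(\mathcal{F})$. The complex step is fine and matches the paper almost verbatim.

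The gap is in the real case, precisely at the point you dismiss as automatic. To invoke Lemma~\ref{hum}$(ii)$ you need three hypotheses on $L=\overline{\Rset}_f/{g_f}_r(\mathcal{F})$: the order is total, products of non-negatives are non-negative, \emph{and} $\mathcal{N}(L)=0$. You justify the last one by saying it is ``in agreement with the fact that $\overline{\Kset}_f$ has no nonzero nilpotents'', but reducedness does not pass to quotients: $\Rset[\epsilon]/(\epsilon^2)$ is a quotient of the reduced ring $\Rset[\epsilon]$ and even carries a total order compatible with multiplication of non-negatives, yet has a nonzero nilpotent. So your stated reason is insufficient. The paper supplies the missing step explicitly: if $a^2\in{g_f}_r(\mathcal{F})$ then, by the characterization lemma, $a^2=a^2\mathcal{X}_A$ for some $A\in\mathcal{F}$; hence $(a\mathcal{X}_{A^c})^2=a^2\mathcal{X}_{A^c}=0$, and \emph{now} reducedness of $\overline{\Rset}_f$ gives $a\mathcal{X}_{A^c}=0$, i.e.\ $a=a\mathcal{X}_A\in{g_f}_r(\mathcal{F})$. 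The idempotent structure of the generators of $g_f(\mathcal{F})$ is essential here; reducedness of the ambient ring alone is not enough.

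Incidentally, the point you flagged as delicate --- that positivity of products descends to $L$ --- is actually routine once you observe that $\alpha\ge 0$ in $L$ lets you pick a q-positive representative $a^+$ (since $a-a^+=a^-\in{g_f}_r(\mathcal{F})$); the paper instead computes $(ab)^-=a^+b^-+a^-b^+$ directly, which amounts to the same thing.
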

\begin{proof} Initially, let us assume that $\Kset=\Rset$. We need to
  show that condition $(ii)$ of Lemma \ref{hum} is 
  verified. Thus, there are 
$a,b\in\overline{\Rset}_f$ such that $a^-,b^-\in g_f(\mathcal{F})$. Then
\begin{eqnarray*}
  (ab)^-&=&\frac{1}{2}(ab-|a||b|)\\
&=&\frac{1}{2}[(a^++a^-)(b^++b^-)-(a^+-a^-)(b^+-b^-)]\\
&=&a^+b^-+a^-b^+.
\end{eqnarray*}
Hence $(ab)^-\in g_f(\mathcal{F})$. Therefore
$a^+b^-+a^-b^+\in g_f(\mathcal{F})$. This means that the positive cone
is invariant for multiplication. Now, let $a\in\overline{\Rset}_f$
such that $a^2\in g_f(\mathcal{F})$. From the definition of the ideal
$g_f(\mathcal{F})$, we have that there exists
$A\in\mathcal{F}$ such that $a^2=a^2\mathcal{X}_A$. Hence,
$a^2\mathcal{X}_{A^c}=0$ and thus
$a\mathcal{X}_{A^c}\in\mathcal{N}(\overline{\Rset}_f)=0$. Them,
$a=a\mathcal{X}_A+a\mathcal{X}_{A^c}=a\mathcal{X}_A\in
g_f(\mathcal{F})$. Thus, we are made in this case. To 
complete the test let us consider the case where $\Kset=\Cset$. 
Let $x,y\in g_f(\mathcal{F})$ such that $xy\in g_f(\mathcal{F})$. Then
$|xy|=|x||y|\in\overline{\Rset}_f\cap
g_f(\mathcal{F})={g_{f}}_r(\mathcal{F})$. For the first case $|x|$ or
$|y|$ belongs to ${g_{f}}_r(\mathcal{F})$ and from the convexity 
of ideals it follows the result.
\end{proof}

\begin{Cor} {\tmsamp{$\{{g_f}_r(\mathcal{F})|\mathcal{F}\in\mathit{P}_*(\mathcal{S}_f)\}$
  is the set of minimal prime ideals of $\overline{\Rset}_f$ and
  $\{g_f(\mathcal{F})|\mathcal{F}\in\mathit{P}_*(\mathcal{S}_f)\}$ is
  the set prime ideals of $\overline{\Cset}_f$.}}
\end{Cor}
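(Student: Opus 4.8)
The plan is to read this off directly from Theorem~\ref{prime} and the preceding theorem asserting that ${g_f}_r(\mathcal{F})$ (resp. $g_f(\mathcal{F})$) is a prime ideal of $\overline{\Rset}_f$ (resp. $\overline{\Cset}_f$) for every $\mathcal{F}\in\mathit{P}_*(\mathcal{S}_f)$. The only extra ingredient needed is that distinct members of the family $\{g_f(\mathcal{F})\}$ are pairwise incomparable under inclusion, equivalently that $\mathcal{F}\mapsto g_f(\mathcal{F})$ is order-reflecting. To establish this, suppose $g_f(\mathcal{F}_1)\subseteq g_f(\mathcal{F}_2)$. For $A\in\mathcal{F}_1$ we have $\mathcal{X}_A\in g_f(\mathcal{F}_1)\subseteq g_f(\mathcal{F}_2)$; if $A\notin\mathcal{F}_2$, then $A^c\in\mathcal{F}_2$ by Definition~\ref{def}$(i)$, so $\mathcal{X}_{A^c}\in g_f(\mathcal{F}_2)$, whence $1=\mathcal{X}_A+\mathcal{X}_{A^c}\in g_f(\mathcal{F}_2)$, contradicting Lemma~\ref{idpro}. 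Thus $\mathcal{F}_1\subseteq\mathcal{F}_2$, and a second application of Definition~\ref{def}$(i)$ (this time to $\mathcal{F}_2$) upgrades this to $\mathcal{F}_1=\mathcal{F}_2$.

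With this in hand, fix $\mathcal{F}\in\mathit{P}_*(\mathcal{S}_f)$ and let $\mathfrak{q}\subseteq{g_f}_r(\mathcal{F})$ be a prime ideal of $\overline{\Rset}_f$. By Theorem~\ref{prime} there is a unique $\mathcal{F}_{\mathfrak{q}}\in\mathit{P}_*(\mathcal{S}_f)$ with ${g_f}_r(\mathcal{F}_{\mathfrak{q}})\subseteq\mathfrak{q}$, hence ${g_f}_r(\mathcal{F}_{\mathfrak{q}})\subseteq{g_f}_r(\mathcal{F})$; the incomparability just proved gives $\mathcal{F}_{\mathfrak{q}}=\mathcal{F}$ and therefore $\mathfrak{q}={g_f}_r(\mathcal{F})$. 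Since ${g_f}_r(\mathcal{F})$ is prime, this shows it is a minimal prime of $\overline{\Rset}_f$. Conversely, if $\mathfrak{p}$ is any minimal prime of $\overline{\Rset}_f$, Theorem~\ref{prime} yields ${g_f}_r(\mathcal{F}_{\mathfrak{p}})\subseteq\mathfrak{p}$ with ${g_f}_r(\mathcal{F}_{\mathfrak{p}})$ prime, so minimality of $\mathfrak{p}$ forces $\mathfrak{p}={g_f}_r(\mathcal{F}_{\mathfrak{p}})$. Hence $\{{g_f}_r(\mathcal{F})\mid\mathcal{F}\in\mathit{P}_*(\mathcal{S}_f)\}$ is exactly the set of minimal primes of $\overline{\Rset}_f$.

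For $\overline{\Cset}_f$ the argument is word for word the same, using that $g_f(\mathcal{F})$ is prime in $\overline{\Cset}_f$ and that Theorem~\ref{prime} also associates to each prime ideal of $\overline{\Cset}_f$ a unique $\mathcal{F}\in\mathit{P}_*(\mathcal{S}_f)$ with $g_f(\mathcal{F})$ contained in it; alternatively, one transfers the real case through the real-part correspondence $\mathfrak{J}\mapsto\mathfrak{J}_r=\mathfrak{J}\cap\overline{\Rset}_f$, together with the identities ${g_f}_r(\mathcal{F})=g_f(\mathcal{F})\cap\overline{\Rset}_f$ and $z\in g_f(\mathcal{F})\Leftrightarrow|z|\in{g_f}_r(\mathcal{F})$, which set up an inclusion-preserving bijection between the minimal primes of the two rings. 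Either route gives that $\{g_f(\mathcal{F})\mid\mathcal{F}\in\mathit{P}_*(\mathcal{S}_f)\}$ is the set of minimal prime ideals of $\overline{\Cset}_f$.

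The substantive point is the incomparability lemma combined with the uniqueness clause of Theorem~\ref{prime}: these are precisely what rule out a strictly smaller prime sitting beneath some $g_f(\mathcal{F})$, and hence are what the minimality hinges on. I expect no real difficulty beyond checking, in the complex case, that the real-part dictionary genuinely restricts to a bijection on minimal primes; everything else is formal once the primality of $g_f(\mathcal{F})$ has been granted.
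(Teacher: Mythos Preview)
Your argument is correct and is exactly the route the paper intends: the corollary is stated without proof precisely because it falls out of the preceding theorem (primality of $g_f(\mathcal{F})$) together with Theorem~\ref{prime}. One small observation: the incomparability lemma you prove by hand is already contained in the uniqueness clause of Theorem~\ref{prime}, since if $g_f(\mathcal{F}_1)\subseteq g_f(\mathcal{F}_2)$ and $g_f(\mathcal{F}_2)$ is prime, then both $\mathcal{F}_1$ and $\mathcal{F}_2$ satisfy the containment condition for the prime $g_f(\mathcal{F}_2)$, forcing $\mathcal{F}_1=\mathcal{F}_2$; so your separate verification, while perfectly valid, is not an extra ingredient.
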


{\bf{\underline{acknowledgement}}}
This paper is part of the second authors Ph.D. thesis done under
supervision of the last author at the University of São Paulo, Brazil.

\addcontentsline{toc}{chapter}{\bibname}
\bibliographystyle{mybibst}
\bibliography{biblio_arj_algebra}

\begin{thebibliography}{1}

\bibitem{afj}
{\sc J.~Aragona, R.~Fernandez, and S.~O. Juriaans}, {\em Natural topologies on
  {C}olombeau algebras}.
\newblock submited.

\bibitem{NTCA}
\leavevmode\vrule height 2pt depth -1.6pt width 23pt, {\em The sharp topology
  on the full {C}olombeau algebra of generalized functions}, Integral
  Transforms Spec. Funct., 17 (2006), pp.~165--170.

\bibitem{AGJ}
{\sc J.~Aragona, A.~R.~G. Garcia, and S.~O. Juriaans}, {\em Generalized
  solutions of a nonlinear parabolic equation with generalized functions as
  initial data}.
\newblock Submitted, 2008.

\bibitem{AJ}
{\sc J.~Aragona and S.~O. Juriaans}, {\em Some structural properties of the
  topological ring of {C}olombeau's generalized numbers}, Comm. Algebra, 29
  (2001), pp.~2201--2230.

\bibitem{AJOS}
{\sc J.~Aragona, S.~O. Juriaans, O.~R.~B. Oliveira, and D.~Scarpal\'ezos}, {\em
  Algebraic and geometric theory of the topological ring of {C}olombeau's
  generalized functions}.
\newblock Submitted.

\bibitem{MK}
{\sc M.~Kunziger}, {\em Lie transformation groups in Colombeau algebras}, PhD
  thesis, Fakult\"at der Universitat Wien, 1996.

\end{thebibliography}

\end{document}